\theoremstyle{plain}
\newtheorem{lemma}{Lemma}[section] 
\newtheorem{theorem}[lemma]{Theorem}
\newtheorem{corollary}[lemma]{Corollary}
\newtheorem{proposition}[lemma]{Proposition}
\theoremstyle{definition}
\newtheorem{remark}[lemma]{Remark}
\newtheorem{example}[lemma]{Example}
\newtheorem{definition}[lemma]{Definition}
\newcommand{\Zset}{\mathbb Z}
\newcommand{\M}{\operatorname{\mathbb M}}
\newcommand{\so}{\mathbf{s}}
\newcommand{\ra}{\mathbf{r}}
\newcommand{\V}{\mathcal V}
\newcommand{\Ter}{\operatorname{Ter}}
\newcommand{\Sink}{\operatorname{Sink}}
\newcommand{\NE}{\operatorname{NE}}
\newcommand{\EC}{\operatorname{EC}}
\newcommand{\ol}{\overline}
\title[Porcupine-quotient graphs and composition series]{Porcupine-quotient graphs, the fourth primary color, and graded composition series of Leavitt path algebras}
\author{Lia Va\v s}
\address{Department of Mathematics, Saint Joseph's University, Philadelphia, PA 19131, USA}
\email{lvas@sju.edu}
\subjclass{16S88, 16P70, 16W50} 
\keywords{Leavitt path algebra, graded ideals and graded quotients, graded simple,  composition series, talented monoid}
\begin{document}

\begin{abstract}
If $E$ is a directed graph, $K$ is a field, and $I$ is a graded ideal of the Leavitt path algebra $L_K(E),$ then $I$ is completely determined by a pair $(H,S)$ of two sets of vertices of $E,$ called an admissible pair, and one writes $I=I(H,S)$ in this case. The ideal $I$ is graded isomorphic to the Leavitt path algebra of the {\em porcupine graph} of $(H,S)$ and the quotient $L_K(E)/I$ is graded isomorphic to the Leavitt path algebra of the {\em quotient graph} of $(H,S).$ We present a construction which generalizes both the porcupine and the quotient constructions and enables one to consider quotients of graded ideals: if $(H,S)$ and $(G,T)$ are admissible pairs such that $(H,S)\leq (G,T)$ (in the sense which corresponds exactly to $I(H,S)\subseteq I(G,T)$), we define the {\em porcupine-quotient graph} $(G,T)/(H,S)$ such that its Leavitt path algebra is graded isomorphic to the quotient $I(G,T)/I(H,S).$ 

Using the porcupine-quotient construction, the existence of a graded composition series of $L_K(E)$ is equivalent to the existence of a finite increasing chain of admissible pairs of $E,$ starting with the trivial pair and ending with the improper pair, such that the quotient of two consecutive pairs is cofinal (a graph is cofinal exactly when its Leavitt path algebra is graded simple). We characterize the existence of such a chain with a set of conditions on $E$ which also provides an algorithm for obtaining a composition series. The conditions are presented in terms of four types of vertices which are all ``terminal'' in a certain sense. Three of the four types are often referred to as the three primary colors of Leavitt path algebras. The fourth primary color in the title of this paper refers to the fourth type of vertices. As a corollary of our results, every unital Leavitt path algebra has a graded composition series. 

We show that the existence of a composition series of $E$ is equivalent to the existence of a suitably defined composition series of the graph monoid $M_E$ as well as a composition series of the talented monoid $M_E^\Gamma.$ We also show that an ideal of $M_E^\Gamma$ is minimal exactly when it is generated by the element of $M_E^\Gamma$ corresponding to a terminal vertex. We characterize graphs $E$ such that only one or only two out of three possible types (periodic, aperiodic, or incomparable) appear among the composition factors of $M_E^\Gamma.$ 
\end{abstract}

\maketitle

\section{Introduction}
If $E$ is a directed graph and $K$ a field
the Leavitt path algebra $L_K(E)$ is naturally graded by the group of integers. The lattice of graded $L_K(E)$-ideals corresponds to the lattice of pairs of certain sets of vertices called the admissible pairs % dodato re comment 1
(we review the relevant definition in section \ref{subsection_quotient_and_porcupine}). The ideal $I(H,S)$ corresponding to an admissible pair $(H,S)$ is graded isomorphic to the Leavitt path algebra of a graph introduced in \cite{Lia_porcupine} which is called the porcupine graph.  The porcupine graph resembles the older construction of a hedgehog graph (\cite[Definitions 2.5.16 and 2.5.20]{LPA_book}) except that the ``spines'' added to the ``body'' determined by $H\cup S$ are longer % change re comment 2
(Example \ref{example_porcupine_and_quotient}  illustrates this), so the name ``porcupine'' was chosen to reflect that. While the Leavitt path algebra of the hedgehog of $(H,S)$ is isomorphic to $I(H,S),$ this isomorphism does not have to be graded. In contrast, the Leavitt path algebra of the porcupine of $(H,S)$ is {\em graded} isomorphic to $I(H,S).$

One can also define the quotient graph $E/(H,S)$ (\cite[Definition 2.4.14]{LPA_book})
in such a way that the quotient $L_K(E)/I(H,S)$ is graded isomorphic to the Leavitt path algebra of $E/(H,S).$ In section \ref{section_porcupine_quotient}, we introduce a graph construction which generalizes both the porcupine and the quotient graph constructions and enables one to represent the quotient of two graded ideals as the Leavitt path algebra of this newly defined graph. Specifically, if $(H,S)$ and $(G,T)$ are admissible pairs such that $(H,S)\leq (G,T)$ (in the sense which corresponds exactly to $I(H,S)\subseteq I(G,T)$), we define the porcupine-quotient graph $(G,T)/(H,S)$ (Definition \ref{definition_porcupine_quotient}) and show that its Leavitt path algebra is graded isomorphic to the quotient $I(G,T)/I(H,S)$ (Theorem \ref{theorem_porcupine_quotient}).  

We also consider two pre-ordered monoids, $M_E$ and $M_E^\Gamma,$ originated in relation to some classification questions (see, for example, \cite{Ara_Goodearl},\cite{Ara_Pardo_graded_K_classification}, \cite{Eilers_et_al}, and \cite{Roozbeh_Annalen}). The {\em graph monoid} $M_E$ is isomorphic to the monoid $\V(L_K(E))$ of the isomorphism classes of finitely generated projective modules. The natural grading of a Leavitt path algebra induces an action of the infinite cyclic group $\Gamma=\langle t\rangle \cong \Zset$ on the graded isomorphism classes of finitely generated graded projective $L_K(E)$-modules and there is a $\Gamma$-isomorphism of the monoid $\V^\Gamma(L_K(E))$ of such graded isomorphism classes and the monoid $M_E^\Gamma,$ also known as the {\em talented monoid} or the {\em graph $\Gamma$-monoid}. In particular, the following lattices are isomorphic: the lattice of order-ideals of $M_E,$ the lattice of $\Gamma$-order-ideals of $M_E^\Gamma,$ the lattice of graded ideals of $L_K(E),$ and the lattice of admissible pairs of $E$. By Proposition \ref{proposition_monoids_of_porcupine_quotient}, if $(G,T)/(H,S)$ is the porcupine-quotient graph of two admissible pairs of  $E,$ then $M_{(G,T)/(H,S)}$ is isomorphic to the quotient of the order-ideals corresponding to $(G,T)$ and $(H,S)$ and  $M_{(G,T)/(H,S)}^\Gamma$ is isomorphic to the quotient of the $\Gamma$-order-ideals corresponding to $(G,T)$ and $(H,S).$ 

We say that $L_K(E)$ has a graded composition series if there is a finite and increasing chain of graded ideals, starting with the trivial ideal and ending with the improper ideal, such that the quotient of each two consecutive ideals is graded simple. Since a Leavitt path algebra is graded simple if and only if the underlying graph is cofinal (see section \ref{subsection_graphs} for a review of this concept), Theorem \ref{theorem_porcupine_quotient} enables us to relate the existence of a graded composition series of $L_K(E)$ with the existence of a finite and increasing chain of admissible pairs, starting with the trivial pair and ending with the improper pair, such that the porcupine-quotient of two consecutive pairs is cofinal. If such a chain exists, we say that $E$ has a composition series. Theorem \ref{theorem_porcupine_quotient} and Proposition \ref{proposition_monoids_of_porcupine_quotient} imply Corollary \ref{corollary_composition_equivalent_with_talented} stating that the following conditions are equivalent. 
\begin{center}
\begin{tabular}{ll}
(1) $E$ has a composition series.$\hskip2cm$ & (2) $L_K(E)$ has a graded composition series. \\
(3) $M_E$ has a composition series. & (4) $M_E^\Gamma$ has a composition series.
\end{tabular}
\end{center}
We aim to characterize the existence of the above composition series by a set of conditions on $E$ which can be directly checked and which produce a specific composition series and achieve that in Theorem \ref{theorem_comp_series}. In order to obtain this result, we start with section \ref{section_graded_simple} in which we introduce a type of vertices which are ``terminal'' in the same sense as the vertices of any of the three types below. 
\begin{enumerate}
\item A {\em sink} is a vertex which emits no edges. A sink connects to no other vertex in the graph except, trivially, to itself.  
\item A {\em cycle without exits} is a 
cycle whose vertices emit only one edge to another vertex in the cycle. The vertices in such a cycle do not connect to any vertices outside of the cycle.  
\item An {\em extreme cycle} is a cycle such that the range of every exit from the cycle connects back to a vertex in the cycle. The vertices in such a cycle $c$ connect only to the vertices on cycles in the same ``cluster'' as $c.$
\end{enumerate}
The significance of these three groups of vertices lies in the fact that the Leavitt path algebra of a finite graph is {\em graded simple} exactly when there is a unique ``cluster'' of vertices of one of the three types above. Because of this, the three graphs below are the three quintessential examples of graphs with the above three types of vertices. The authors of \cite{LPA_book} refer to the Leavitt path algebras of these three graphs as the {\em three primary colors} of Leavitt path algebras.  
\[\xymatrix{\bullet\ar[r] &
\bullet\ar[r] & \bullet\ar@{.}[r] & \bullet\ar[r] & \bullet}\hskip3.45cm\xymatrix{\bullet\ar@(ur,dr)}
\hskip3.45cm\xymatrix{\bullet \ar@(ur,dr) \ar@(u,r) \ar@(ul,ur) \ar@{.} @(l,u) \ar@{.} @(dr,dl) \ar@(r,d)& }\]
However, if the graph is not finite, its 
Leavitt path algebra can be graded simple without having exactly one cluster of the three types of vertices as above. For example, the Leavitt path algebras of the graph below is graded simple and the graph has neither cycles nor sinks. 
\[\xymatrix{\bullet\ar@/_1pc/ [r] \ar@/^1pc/ [r] &\bullet\ar @/_1pc/ [r] \ar@/^1pc/ [r]&
\bullet\ar @/_1pc/ [r] \ar@/^1pc/ [r]& \bullet\ar@{.}[r]&}\]

In Definition \ref{definition_terminal_path}, we 
introduce terminal paths as the infinite paths whose vertices are terminal in the same sense as the above three types. According to this definition, every infinite path of the above graph is terminal. In Definition \ref{definition_clusters}, we make the concept of a ``cluster'' more formal. In Theorem \ref{theorem_graded_simple}, we characterize graded simplicity of a Leavitt path algebra $L_K(E)$ by a set of conditions on $E$ which are direct to check and which are given in terms of the existence of exactly one cluster of the four types of terminal vertices. The existence of the fourth type does not contradict the Trichotomy Principle (\cite[Proposition 3.1.14]{LPA_book}), but it refines it: it distinguishes between sinks and terminal paths. 
The results of the last two sections illustrate that this distinction is a useful one. 

In Theorem \ref{theorem_comp_series}, we present a set of conditions on $E$ which are equivalent with $E$ having a composition series. Such conditions are constructive in the following sense: given a graph, one can construct a chain of admissible pairs such that the porcupine-quotient graphs of two consecutive pairs are cofinal and check if such a chain terminates after finitely many steps. Informally, such a chain is obtained by iteratively cutting the terminal vertices (and their breaking sets if $E$ is not row-finite). A direct corollary of Theorem \ref{theorem_comp_series} is that every unital Leavitt path algebra has a graded composition series (Corollary \ref{corollary_unital_comp_series}). 

Using the natural order $\leq$ and the action of $\Gamma$ on $M_E^\Gamma,$ one can categorize each element of $M_E^\Gamma$ as exactly one of the following three types: periodic, aperiodic and incomparable. If all nonzero elements of a $\Gamma$-order-ideal $I$ of $M_E^\Gamma$ have the same type, $I$ is said to be of that type also. In Theorem \ref{theorem_periodic_aperiodic_incomp}, we show that a $\Gamma$-order-ideal $I$ of $M_E^\Gamma$ is minimal exactly when $I$ is generated by the element $[v]$ of $M_E^\Gamma$ corresponding to a terminal vertex $v$ and that $I$ is periodic (respectively, aperiodic or comparable) exactly when $[v]$ is such also. In Theorem \ref{theorem_two_types} and  Corollary \ref{corollary_one_type}, we characterize  graphs $E$ such that only two or only one  of those three types appear among the composition factors of $M_E^\Gamma.$ In one of these cases, our work generalizes results from \cite{Roozbeh_Alfilgen_Jocelyn} formulated only for finite graphs.

\section{Prerequisites}\label{section_prerequisites}

\subsection{Graded rings}
A ring $R$ (not necessarily unital) is {\em graded} by a group $\Gamma$ if $R=\bigoplus_{\gamma\in\Gamma} R_\gamma$ for additive subgroups $R_\gamma$ and if $R_\gamma R_\delta\subseteq R_{\gamma\delta}$ for all $\gamma,\delta\in\Gamma.$ The elements of the set $\bigcup_{\gamma\in\Gamma} R_\gamma$ are said to be {\em homogeneous}. A left ideal $I$ of a graded ring $R$ is {\em graded} if $I=\bigoplus_{\gamma\in \Gamma} I\cap R_\gamma.$ Graded right ideals and graded ideals are defined similarly. A graded ring is {\em graded simple} if there are no nontrivial and proper two-sided graded ideals (note that we do not require it to be graded Artinian). 

A ring $R$ is an involutive ring, or a $*$-ring, if there is an anti-automorphism $*:R\to R$ of order two. If $R$ is also a $K$-algebra for some commutative $*$-ring $K$, then $R$ is a $*$-algebra if $(kx)^*=k^*x^*$ for all $k\in K$ and $x\in R.$
If $R$ is a $\Gamma$-graded ring with involution, it is a {\em graded $*$-ring} if $R_\gamma^*\subseteq R_{\gamma^{-1}}.$

A ring $R$ is {\em locally unital} if for every finite set $F\subseteq R,$ there is an idempotent $u\in R$ such that $xu=ux=x$ for every $x\in F.$ A $\Gamma$-graded ring $R$ is {\em graded locally unital} if for every finite set $F\subseteq R$ (of homogeneous elements) there is a homogeneous idempotent $u\in R$ such that $xu=ux=x$ for every $x\in F.$ The statements with and without the part in parenthesis are equivalent. 

\subsection{Graphs and properties of vertex sets}\label{subsection_graphs}
If $E$ is a directed graph, we let $E^0$ denote the set of vertices, $E^1$ denote the set of edges, and $\so$ and $\ra$ denote the source and the range maps of $E.$ A {\em sink} of $E$ is a vertex which emits no edges and an {\em infinite emitter} is a vertex which emits infinitely many edges. A vertex of $E$ is {\em regular} if it is neither a sink nor an infinite emitter. The graph $E$ is {\em row-finite} if it has no infinite emitters and $E$ is {\em finite} if it has finitely many vertices and edges.  

A {\em path} is a single vertex or a sequence of edges $e_1e_2\ldots e_n$ for some positive integer $n$ such that $\ra(e_i)=\so(e_{i+1})$ for $i=1,\ldots, n-1.$  The length $|p|$ of a path $p$ is zero if $p$ is a vertex and it is $n$ if $p$ is a sequence of $n$ edges. The set of vertices on a path $p$ is denoted by $p^0.$ 

The functions $\so$ and $\ra$ extend to paths naturally. A path $p$ is {\em closed}  if $\so(p)=\ra(p).$ A {\em cycle} is a closed path such that different edges in the path have different sources. A cycle has {\em an exit} if a vertex on the cycle emits an edge which is not an edge of the cycle. A cycle $c$ is {\em extreme} if $c$ has exits and for every path $p$ with $\so(p)\in c^0,$ there is a path $q$ such that $\ra(p)=\so(q)$ and $\ra(q)\in c^0.$  

An {\em infinite path} is a sequence of edges $e_1e_2\ldots$ such that $\ra(e_n)=\so(e_{n+1})$ for $n=1,2\ldots.$ Just as for finite paths, we use $p^0$ for the set of vertices of an infinite path $p.$ To emphasize that a path is infinite, we denote it by a Greek letter in sections \ref{section_graded_simple} to \ref{section_talented}.   

Let $E^{\leq\infty}$ be the set of infinite paths or finite paths ending in a sink or an infinite emitter. A vertex $v$ is {\em cofinal} if for each $p\in E^{\leq\infty}$ there is $w\in p^0$ such that $v\geq w$ and  $E$ is {\em cofinal} if each vertex is cofinal.

If $u,v\in E^0$ are such that there is a path $p$ with $\so(p)=u$ and $\ra(p)=v$, we write $u\geq v.$ For $V\subseteq E^0,$
the set $T(V)=\{u\in E^0\mid v\geq u$ for some $v\in V\}$ is called the {\em tree} of $V,$ and, following \cite{Lia_irreducible}, we use $R(V)$ to denote the set $\{u\in E^0\mid u\geq v$ for some $v\in V\}$ called the {\em root} of $V.$ To emphasize that the tree and the root of $V$ are considered in the graph $E$, we use $T^E(V)$ and $R^E(V).$ If $V=\{v\},$ we use $T(v)$ for $T(\{v\})$ and $R(v)$ for $R(\{v\}).$ The requirement that a cycle $c$ with an exit is extreme can be written as $T(c^0)\subseteq R(c^0)$ (compare with the requirement in Definition \ref{definition_terminal_path}).

A subset $H$ of $E^0$ is said to be {\em hereditary} if $T(H)\subseteq H.$ The set $H$ is {\em saturated} if $v\in H$ for any regular vertex $v$ such that $\ra(\so^{-1}(v))\subseteq H.$ For every $V\subseteq E^0,$ the intersection of all saturated sets of vertices which contain $V$ is the smallest saturated set which contains $V.$ This set is the {\em saturated closure of $V$}. The saturated closure $\ol{V}$ of $T(V)$ is both hereditary and saturated and it is the smallest hereditary and saturated set which contains $V.$ 

The saturated closure of $V$ is the union of the sets $\Lambda_n(V), n=0,1,\ldots,$ defined by $\Lambda_0(V)=V$ and $\Lambda_{n+1}(V)=\Lambda_n(V)\cup \{v\in E^0\mid v$ is regular and $\ra(\so^{-1}(v))\subseteq \Lambda_n(V) \}.$ The proof is analogous to the proof of \cite[Lemma 2.0.7]{LPA_book}: if $\Lambda(V)$ denotes the union $\bigcup_{n=0}^\infty\Lambda_n(V),$ it is direct to check that $\Lambda$ is saturated, that it contains $V,$ and that it is contained in every saturated set which contains $V.$  This description is used in the proof of the next lemma.

\begin{lemma}
Let $E$ be any graph, $V\subseteq E^0,$ and $H\subseteq E^0$ be a hereditary set such that $\ol V\subseteq H\subseteq R(T(V)).$ For $v\in H,$ let $P_v(T(V))$ be the set of paths originating at $v$ and terminating at a vertex of $T(V)$ such that no vertex, except the range, is in $T(V).$ The following conditions are equivalent. 
\begin{enumerate}
\item $H=\ol{V}.$

\item The set $H-T(V)$ does not contain infinite emitters and every infinite path with vertices in $H$ contains a vertex of $T(V).$

\item The set $P_v(T(V))$ is finite for every $v\in H.$
\end{enumerate}
\label{lemma_saturated_closure} 
\end{lemma}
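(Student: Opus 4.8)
The plan is to establish the cycle of implications $(1)\Rightarrow(2)\Rightarrow(3)\Rightarrow(1)$, using three ingredients: the filtration $\ol V=\bigcup_n\Lambda_n(T(V))$ recalled just before the lemma, a König's lemma argument on a tree of paths, and the standing hypothesis $H\subseteq R(T(V))$ which lets one manufacture paths terminating in $T(V)$. For $(1)\Rightarrow(2)$, assign to each $v\in\ol V$ its \emph{level} $\ell(v)$, the least $n$ with $v\in\Lambda_n(T(V))$. By construction $v\in\ol V-T(V)$ if and only if $\ell(v)\ge 1$, and then $v$ is regular, hence not an infinite emitter; this gives the first assertion. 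Moreover, if $\ell(v)\ge 1$ then $\ra(\so^{-1}(v))\subseteq\Lambda_{\ell(v)-1}(T(V))$, so every edge out of $v$ strictly decreases the level. Thus along an infinite path lying in $\ol V$ and avoiding $T(V)$ the levels would strictly decrease forever, which is impossible; this gives the second assertion.

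For $(2)\Rightarrow(3)$, fix $v\in H$ and let $\mathcal{T}$ be the set of paths $p$ with $\so(p)=v$ such that no vertex of $p$ other than $\ra(p)$ lies in $T(V)$, partially ordered by the prefix relation. Since $H$ is hereditary, every vertex occurring on such a path lies in $H$. A node $p$ of $\mathcal{T}$ with $\ra(p)\in T(V)$ has no proper extension in $\mathcal{T}$, while a node with $\ra(p)\notin T(V)$ has its range in $H-T(V)$, which contains no infinite emitter, so $p$ has only finitely many children; thus $\mathcal{T}$ is finitely branching. An infinite branch of $\mathcal{T}$ would union to an infinite path lying in $H$ and avoiding $T(V)$, contradicting $(2)$, so $\mathcal{T}$ has no infinite branch and is finite by König's lemma. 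Since $P_v(T(V))\subseteq\mathcal{T}$, it is finite.

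For $(3)\Rightarrow(1)$, the inclusion $\ol V\subseteq H$ is given, so it suffices to show $H\subseteq\ol V$. Suppose not and pick $w\in B:=H-\ol V$. Note $B\cap T(V)=\emptyset$, and a vertex of $B$ cannot be a sink, since a sink in $R(T(V))$ lies in $T(V)$. If some $w\in B$ were an infinite emitter, then for each of its infinitely many edges $e$ choose a shortest path from $\ra(e)\in H\subseteq R(T(V))$ to $T(V)$; prepending $e$ yields infinitely many paths in $P_w(T(V))$, pairwise distinct and with no interior vertex in $T(V)$, contradicting $(3)$. Hence every vertex of $B$ is regular, so by saturation of $\ol V$ each $w\in B$ emits an edge whose range again lies in $B$; iterating produces an infinite path $w_0\to w_1\to\cdots$ inside $B$. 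For each $n$, append to the prefix $w_0\to\cdots\to w_n$ a shortest path from $w_n\in R(T(V))$ to $T(V)$; as $n$ varies, their points of first divergence from the fixed infinite path tend to infinity, so these are infinitely many distinct elements of $P_{w_0}(T(V))$, again contradicting $(3)$. Therefore $B=\emptyset$ and $H=\ol V$.

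I expect the main obstacle to be the bookkeeping in $(3)\Rightarrow(1)$: verifying that each constructed path genuinely lies in $P_v(T(V))$ (that is, that minimality of the appended path keeps all interior vertices out of $T(V)$) and that the paths are pairwise distinct, and making sure both hypotheses on $H$ — hereditary, and contained in $R(T(V))$ — are invoked exactly where needed. The König's lemma step in $(2)\Rightarrow(3)$ and the level computation in $(1)\Rightarrow(2)$ are routine once the tree $\mathcal{T}$ and the function $\ell$ are set up correctly.
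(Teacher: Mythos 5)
Your proof is correct, and on the first two implications it essentially coincides with the paper's: (1)$\Rightarrow$(2) is the same computation with the filtration $\Lambda_n(T(V))$ (your level function $\ell$ is exactly what the paper means by ``follows directly from the description of $\ol V$''), and your K\"onig's lemma argument for (2)$\Rightarrow$(3) is the direct form of the paper's contrapositive, which observes that an infinite $P_v(T(V))$ forces either an infinite emitter or an infinite path avoiding $T(V)$. The genuine divergence is in (3)$\Rightarrow$(1): the paper argues directly by induction on $n_v=\max\{|p|\mid p\in P_v(T(V))\}$ --- if $n_v=0$ then $v\in T(V)$, and if $n_v>0$ then $v$ is regular, $n_{\ra(e)}<n_v$ for each $e\in\so^{-1}(v)$, so $\ra(e)\in\ol V$ by induction and $v\in\ol V$ by saturation --- whereas you argue by contradiction, building an infinite path inside $H-\ol V$ and manufacturing infinitely many elements of $P_{w_0}(T(V))$. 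Both are well-foundedness arguments, but the induction is shorter and avoids the distinctness bookkeeping you rightly flag; your distinctness claim does go through, most cleanly phrased as: a fixed path $q$ can equal $q_n$ only for $n<|q|$, since $q$ terminates in $T(V)$ and hence is not a prefix of the infinite path in $B$, so infinitely many distinct $q_n$ occur. One shared caveat: the definition of $P_v(T(V))$ must be read as forbidding every non-final \emph{occurrence} of a vertex of $T(V)$ along the path (not merely requiring $p^0-\{\ra(p)\}$ to miss $T(V)$); otherwise a cycle based at a vertex of $T(V)$ would make $P_v(T(V))$ infinite even when $H=\ol V$, and your claim that a node of $\mathcal T$ with range in $T(V)$ is a leaf relies on exactly this reading.
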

\begin{proof}
The implication (1) $\Rightarrow$ (2)
follows directly from the description of $\ol{V}$ in terms of $\Lambda_n(T(V)).$ 

The contrapositive of the implication (2) $\Rightarrow$ (3) is rather direct since if $P_v(T(V))$ is infinite for some $v\in H,$ then there is either an infinite emitter on some of the paths in $P_v(T(V))$ or there is an infinite path with all of its vertices in $H-\ol{V}$. 

To show (3) $\Rightarrow$ (1), assume that (3) holds and let $n_v=\max\{|p|\mid p\in P_v(T(V))\}$ for $v\in H.$ If $n_v=0,$ then $v\in T(V)\subseteq \ol{V}.$ If $n_v>0,$ then $v$ is regular and, for each $e\in \so^{-1}(v),$ $\ra(e)\in H$ and  $n_{\ra(e)}<n_v.$ By induction, we can conclude that $\ra(e)\in \ol{V}.$ As $\ol{V}$ is saturated, $v\in \ol{V}.$  This shows that $H\subseteq \ol{V}.$ As the other direction is assumed to hold, (1) holds.  
\end{proof}

\subsection{Leavitt path algebra}
If $K$ is any field, the \emph{Leavitt path algebra} $L_K(E)$ of $E$ over $K$ is a free $K$-algebra generated by the set  $E^0\cup E^1\cup\{e^\ast\mid e\in E^1\}$ such that for all vertices $v,w$ and edges $e,f,$

\begin{tabular}{ll}
(V)  $vw =0$ if $v\neq w$ and $vv=v,$ & (E1)  $\so(e)e=e\ra(e)=e,$\\
(E2) $\ra(e)e^\ast=e^\ast\so(e)=e^\ast,$ & (CK1) $e^\ast f=0$ if $e\neq f$ and $e^\ast e=\ra(e),$\\
(CK2) $v=\sum_{e\in \so^{-1}(v)} ee^\ast$ for each regular vertex $v.$ &\\
\end{tabular}

The elements of $L_K(E)$ are of the form $\sum_{i=1}^n k_ip_iq_i^\ast$ for some $n$, paths $p_i$ and $q_i$, and $k_i\in K,$ for $i=1,\ldots,n$ where $v^*=v$ for $v\in E^0$ and $p^*=e_n^*\ldots e_1^*$ for a path $p=e_1\ldots e_n.$ The algebra 
$L_K(E)$ is an involutive $K$-algebra with 
$\left(\sum_{i=1}^n k_ip_iq_i^\ast\right)^*=\sum_{i=1}^n k_i^*q_ip_i^\ast$ where $k_i\mapsto k_i^*$ is any involution on $K$. In addition, $L_K(E)$ is graded locally unital (with the finite sums of vertices as the local units), and $L_K(E)$ is unital if and only if $E^0$ is finite in which case $\sum_{v\in E^0}v$ is the identity.

If we consider $K$ to be trivially graded by $\Zset,$ $L_K(E)$ is naturally graded by $\Zset$ so that the $n$-component $L_K(E)_n$ is the $K$-linear span of the elements $pq^\ast$ for paths $p, q$ with $|p|-|q|=n.$ This grading and the involutive structure make $L_K(E)$ into a graded $*$-algebra.

If $R$ is a $K$-algebra which contains  elements $p_v$ for $v\in E^0,$ and $x_e$ and $y_e$ for $e\in E^1$ such that the five axioms hold for these elements, the Universal Property of $L_K(E)$ states that there is a unique algebra homomorphism $\phi:L_K(E)\to R$ such that $\phi(v)=p_v, \phi(e)=x_e,$ and $\phi(e^*)=y_e$ (see \cite[Remark 1.2.5]{LPA_book}). If $R$ is $\Zset$-graded and $p_v\in R_0$ for $v\in E^0,$ $x_e\in R_1$ and $y_e\in R_{-1}$ for $e\in E^1,$ then $\phi$ is graded.
By the Graded Uniqueness Theorem (\cite[Theorem 2.2.15]{LPA_book}), such graded map $\phi$ is injective if $p_v\neq 0$ for $v\in E^0.$ If $R$ is involutive and $\phi$ is such that $y_e=x_e^*,$ then $\phi$ is a $*$-homomorphism (i.e., $\phi(x^*)=\phi(x)^*$ for every $x\in L_K(E)$).

\subsection{The quotient and the porcupine graphs}\label{subsection_quotient_and_porcupine}
If $H$ is hereditary and saturated, a {\em breaking vertex} of $H$ is an element of the set 
\[B_H=\{v\in E^0-H\,|\, v\mbox{ is an infinite emitter and }0<|\so^{-1}(v)\cap \ra^{-1}(E^0-H)|<\infty\}.\]
For each $v\in B_H,$ let $v^H$ stands for $v-\sum ee^*$ where the sum is taken over $e\in \so^{-1}(v)\cap \ra^{-1}(E^0-H).$

An {\em admissible pair} is a pair $(H, S)$ where $H\subseteq E^0$ is hereditary and saturated and $S\subseteq B_H.$ 
For an admissible pair $(H,S)$, 
the ideal $I(H,S)$ generated by $H\cup \{v^H \,|\, v\in S \}$ is graded since it is generated by homogeneous elements. It is the $K$-linear span of the elements $pq^*$ for paths $p,q$ with $\ra(p)=\ra(q)\in H$ and the elements $pv^Hq^*$ for paths $p,q$ with $\ra(p)=\ra(q)=v\in S$ (see \cite[Lemma 5.6]{Tomforde}). Conversely, for a graded ideal $I$, $H=I\cap E^0$ is hereditary and saturated and for $S=\{v\in B_H\mid v^H\in I\},$ $I=I(H,S)$ (\cite[Theorem 5.7]{Tomforde}, also \cite[Theorem 2.5.8]{LPA_book}). If $S=\emptyset,$ we shorten $(H,\emptyset)$ to $H$ and $I(H, \emptyset)$ to $I(H).$ 

The set of admissible pairs is a lattice with respect to the relation 
\[(H,S)\leq (G,T)\;\; \mbox{ if }H\subseteq K\mbox{ and }S\subseteq G\cup T\]
(see \cite[Proposition 2.5.6]{LPA_book} for the meet and the join of this lattice). The correspondence $(H,S)\mapsto I(H,S)$ is a lattice isomorphism of this lattice and the lattice of graded ideals. 

An admissible pair $(H,S)$ gives rise to the {\em quotient graph} $E/(H,S),$ defined so that 
\[
\begin{array}{l}
(E/(H,S))^0=E^0-H \cup\{v'\mid v\in B_H-S\}, \\
(E/(H,S))^1=\{e\in E^1\mid \ra(e)\notin H\}\cup\{e'\mid e\in E^1\mbox{ and }\ra(e)\in B_H-S\}, 
\end{array}
\]
and with $\so$ and $\ra$ the same as in $E$ on $E^1 \cap (E/(H,S))^1$ and $\so(e')=\so(e),$ $\ra(e')=\ra(e)'.$ The algebras $L_K(E)/I(H,S)$ and $L_K(E/(H,S))$ are graded isomorphic (see \cite[Theorem 5.7]{Tomforde}).

An admissible pair $(H,S)$ also gives rise to the {\em porcupine graph} $P_{(H,S)}$  
defined as follows. Let
\[
\begin{array}{l}
F_1(H,S)=\{e_1\ldots e_n\mbox{ is a path of }E\mid \ra(e_n)\in H, \so(e_n)\notin H\cup S\}\mbox{ and}\\
F_2(H,S)=\{p\mbox{ is a path of }E\mid \ra(p)\in S,\; |p|>0\}.
\end{array}\]
For each $e\in (F_1(H,S)\cup F_2(H,S))\cap E^1,$ let $w^e$ be a new vertex and $f^e$ a new edge such that $\so(f^e)=w^e$ and $\ra(f^e)=\ra(e).$
Continue this process inductively as follows. 
For each path $p=eq$ where $q\in F_1(H,S)\cup F_2(H,S)$ and $|q|>0,$ add a new vertex $w^p$ and a new edge $f^p$ such that $\so(f^p)=w^p$ and $\ra(f^p)=w^q.$ One defines the vertices and edges of $P_{(H,S)}$ as follows 
\[
\begin{array}{l}
P_{(H,S)}^0=H\cup S\cup \{w^p \mid p\in F_1(H,S)\cup F_2(H,S)\}\mbox{ and}\\
P_{(H,S)}^1= \{e\in E^1\,|\, \so(e)\in H\}\cup \{e\in E^1\,|\, \so(e)\in S, \ra(e)\in H\}\cup \{f^p\mid p\in F_1(H,S)\cup F_2(H,S)\}.
\end{array}
\]
The $\so$ and $\ra$ maps are the same as in $E$ for the common edges and they are defined as above for the new edges.  
The algebras $L_K(P_{(H,S)})$ and $I(H,S)$  are graded isomorphic (see \cite[Theorem 3.3]{Lia_porcupine}).

We exhibit some examples of porcupine and quotient graphs below. Example \ref{example_three_porcupine_quotients}  contains further examples of porcupine graphs. 

\begin{example}%dodat example and recenica iznad re comments 2 i 1
\label{example_porcupine_and_quotient} 
Let $E$ be the first graph below, let $H=\{v\},$ and let $S=B_H=\{w\}$. 
In this case, the quotient graph is the second graph below. We have that $F_1(H,S)=\{e_3, e_2e_3, e_1e_2e_3\}$ and $F_2(H,S)=\{e_1\}.$ The porcupine graph is the third graph below. 
\[\xymatrix{
\bullet \ar[r]^{e_1} &{\bullet}^{w} \ar@{.} @/_1pc/ [r] _{\mbox{ } } \ar@/_/ [r] \ar [r] \ar@/^/ [r] \ar@/^1pc/ [r] \ar[d]^{e_2} 
& {\bullet}^{v}\\ & \bullet\ar[ur]_{e_3}\ar[r]&\bullet }\hskip3cm
\xymatrix{\bullet\ar[r]^{e_1}&\bullet\ar[r]^{e_2}&\bullet\ar[r] &\bullet &&\\\bullet \ar[r]^{f^{e_1}} &{\bullet} \ar@{.} @/_1pc/ [r] _{\mbox{ } } \ar@/_/ [r] \ar [r] \ar@/^/ [r] \ar@/^1pc/ [r]  & {\bullet} & \bullet\ar[l]_{f^{e_3}}& \bullet\ar[l]_{f^{e_2e_3}}&\bullet\ar[l]_{f^{e_1e_2e_3}} }\]

Next, let $E$ be the first graph below and let $H$ consists of the sink of $E.$ The quotient graph of $(H, \emptyset)$ is the second and the porcupine graph is the third graph below. We also note that the hedgehog graph (see \cite[Definition 2.5.16]{LPA_book}) is the fourth graph below. 
\[\xymatrix{{\bullet}\ar@(lu,ld)  \ar[r] & {\bullet}}\hskip2cm\xymatrix{{\bullet}\ar@(lu,ld)}\hskip2cm\xymatrix{\ar@{.>}[r] &
\bullet \ar[r] &  \bullet\ar[r]&  \bullet\ar[r] & 
\bullet }\hskip2cm\xymatrix{\bullet\ar[dr] &  \bullet\ar[d]  &  \circ \ar@{.>}[dl] \\
\bullet \ar[r]  & \bullet   &  \circ \ar@{.>}[l] }\]
The comparison of the porcupine and the hedgehog illustrates the point from the introduction: the hedgehog graph of an  admissible pair can have more ``spines'' and they are short (all of length one) and the porcupine graph can have fewer ``spines'' and they can be (and often are) of length larger than one. 

In addition, let $F$ be the first graph below and let $G$ be its sink. The second graph below is the quotient graph  and the third graph below is the porcupine graph of $(G,\emptyset)$. 
{\small
\[\xymatrix{\\\bullet\\\bullet\ar@(lu,ld)\ar@(ru, rd)  \ar[u] }\hskip2.8cm
\xymatrix{\\\\\bullet\ar@(lu,ld)\ar@(ru, rd)}\hskip2.8cm 
\xymatrix{
&&&\bullet&&&\\
&&&\bullet\ar[u]&&&\\
&\bullet\ar[urr]&&&&\bullet\ar[ull]&\\
\bullet\ar[ur]\ar@{.}[d]&&\bullet\ar[ul]\ar@{.}[d]&&\bullet\ar[ur]\ar@{.}[d]&&\bullet\ar[ul]\ar@{.}[d]\\
&&&&&&\\
}
\]}
While the hedgehog graph of $G$ is the {\em same} as the hedgehog graph of $H$ from the previous example, we can see that the two corresponding porcupine graphs are very different. This illustrates how the porcupine graph retains more information from the original graph than the hedgehog graph.  
\end{example}

We finish this subsection with an observation and a lemma. If $R$ is a ring, $J$ is its ideal which is locally unital as a ring, and $I$ an ideal of $J,$ then $I$ is an ideal of $R$. Indeed, if $x\in I$ and  $r\in R,$ there is $u\in J$ which is a local unit for $x$ so that $xr=(xu)r=(xu)(ur)\in IJ\subseteq I.$ Similarly, $rx\in I.$ By an analogous argument, if $\Gamma$ is any group, $R$ is a $\Gamma$-graded ring, $J$ is a graded ideal of $R$ which is (graded) locally unital as a ring, and if $I$ is a graded ideal of $J,$ then $I$ is a graded ideal of $R.$  

For a Leavitt path algebra, finite sums of vertices are homogeneous local units. Every (graded) ideal is (graded) isomorphic to a Leavitt path algebra by the porcupine graph construction, so it is also (graded) locally unital. Thus, the above observation proves the following lemma. %ovaj paragraf iznad menjan re comment 5

\begin{lemma}
If $E$ is any graph and $I$ is a (graded) ideal of $L_K(E),$ then any (graded) ideal of $I$ is a (graded) ideal of $L_K(E).$
\label{lemma_graded_ideal_transitive}
\end{lemma}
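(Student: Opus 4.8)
The plan is to reduce everything to a single elementary ring-theoretic fact: if $R$ is a ring, $J$ is an ideal of $R$ that happens to be locally unital as a ring in its own right, and $I$ is an ideal of $J$, then $I$ is an ideal of $R$. To prove this fact I would take $x\in I$ and $r\in R$, choose an idempotent local unit $u\in J$ for $x$ (so that $xu=ux=x$), and then write $xr=(xu)r=x(ur)$. Since $J$ is an ideal of $R$ we have $ur\in J$, and since $I$ is an ideal of $J$ and $x\in I$, this gives $x(ur)\in I$; the verification that $rx\in I$ is symmetric, using $ru\in J$. For the graded statement I would run the identical computation but take $u$ to be a homogeneous idempotent local unit for the homogeneous element $x$ (such a $u$ exists because, as recalled in Section \ref{section_prerequisites}, a graded locally unital ring admits local units that are homogeneous idempotents), and then observe that graded-ideal membership is preserved because all the operations involved respect the $\Zset$-grading.

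The second ingredient is that every (graded) ideal $I$ of $L_K(E)$ is itself (graded) isomorphic, as a $K$-algebra, to a Leavitt path algebra: writing $I=I(H,S)$ for the admissible pair determined by $I$, the porcupine graph construction of Section \ref{subsection_quotient_and_porcupine} gives a (graded) isomorphism $I\cong L_K(P_{(H,S)})$. Leavitt path algebras are (graded) locally unital, with finite sums of vertices serving as homogeneous idempotent local units, so $I$ is (graded) locally unital as a ring. Applying the elementary fact above with $R=L_K(E)$ and $J=I$ then yields the lemma.

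I do not anticipate a genuine obstacle here. The only points that require a little care are: (i) arranging the local units to be idempotent so that the associativity manipulation $(xu)r=x(ur)$ lands cleanly back inside $I$; and (ii) in the graded case, ensuring that the local unit chosen for a given homogeneous $x$ is itself homogeneous, which is precisely the equivalence of the two formulations of ``graded locally unital'' noted in the Prerequisites. Since the isomorphism furnished by the porcupine theorem is an algebra (respectively, graded algebra) isomorphism, local unitality (respectively, graded local unitality) of $I$ transfers across it verbatim, and no properties of the porcupine graph beyond the existence of this isomorphism are needed.
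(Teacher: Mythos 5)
Your proposal is correct and follows essentially the same route as the paper: the paper likewise first records the elementary fact that an ideal $I$ of a locally unital ideal $J$ of a ring $R$ is an ideal of $R$ (via the computation $xr=(xu)r\in IJ\subseteq I$ and its graded analogue), and then invokes the porcupine graph construction to conclude that every (graded) ideal of $L_K(E)$ is (graded) locally unital. The only cosmetic difference is that you write $(xu)r=x(ur)$ where the paper writes $(xu)r=(xu)(ur)$; both land in $I$ for the same reason.
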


\subsection{Pre-order monoids and their order-ideals}\label{subsection_order-ideals}
An abelian monoid $M$ with a reflexive and transitive relation (a pre-order) $\geq$ is a {\em pre-ordered monoid} if $x\geq y$ implies $x + z\geq y + z$ for all $x, y, z\in M.$ A submonoid $I$ of a pre-ordered monoid $M$ is an {\em order-ideal} of $M$ if $x+y\in I$ implies $x\in I$ and $y\in I$ (equivalently $x\geq y$ and $x\in I$ implies $y\in I$). 

If $\Gamma$ is a group and $M$ a pre-ordered monoid with a left action of $\Gamma,$ then $M$ is a {\em pre-ordered $\Gamma$-monoid} if $x\geq y$ implies $\gamma x\geq \gamma y$ for all $x, y\in M$ and $\gamma\in \Gamma.$ A $\Gamma$-submonoid $I$ of a pre-ordered $\Gamma$-monoid $M$ which is an order-ideal is  a {\em $\Gamma$-order-ideal}.

\subsection{The graph monoid and the talented monoid}\label{subsection_graph_monoid}
For any infinite emitter $v$ of a graph $E$ and any finite and nonempty $Z\subseteq \so^{-1}(v),$ let $q^v_Z=v-\sum_{e\in Z}ee^*.$ The {\em graph monoid} $M_E$ is the free abelian monoid on generators $[v]$ for $v\in E^0$ and $[q^v_Z]$ for infinite emitters $v$ and nonempty and finite sets $Z\subseteq \so^{-1}(v)$ subject to the relations
\[[v]=\sum_{e\in \so^{-1}(v)}[\ra(e)],\hskip.4cm [v]=[q^v_Z]+\sum_{e\in Z}[\ra(e)],\,\mbox{ and }\;[q^v_Z]=[q^v_W]+\sum_{e\in W-Z}[\ra(e)]\]
where $v$ is a vertex which is regular for the first relation and an infinite emitter for the second two relations in which $Z\subsetneq W$ are finite and nonempty subsets of $\so^{-1}(v).$
The map $[v]\mapsto [vL_K(E)]$ and $[q^v_Z]\mapsto [q^v_ZL_K(E)]$ extends to an isomorphism $\gamma_E$ of $M_E$ and $\V(L_K(E))$ by \cite[Corollary 3.2.11]{LPA_book}.   

If $\Gamma=\langle t\rangle$ is the infinite cyclic group on $t,$ the {\em talented monoid } or the {\em graph $\Gamma$-monoid $M_E^\Gamma$}  is the free abelian $\Gamma$-monoid on the same generators as $M_E$ subject to the relations
\[[v]=\sum_{e\in \so^{-1}(v)}t[\ra(e)],\hskip.4cm [v]=[q^v_Z]+\sum_{e\in Z}t[\ra(e)],\,\mbox{ and }\;[q^v_Z]=[q^v_W]+\sum_{e\in W-Z}t[\ra(e)]\]
where $v, Z,$ and $W$ have the same properties as for the defining relations of $M_E.$ While the monoid $M_E$ can register only whether two vertices are connected, the ``talent'' of $M_E^\Gamma$ is to register the lengths of paths between vertices: if $p$ is a path of length $n$, the relation $[\so(p)]=t^n[\ra(p)]+x$ holds in $M_E^\Gamma$ for some $x\in M_E^\Gamma.$ If $\V^{\Gamma}(L_K(E))$ is the monoid of the graded isomorphism classes $[P]$ of finitely generated graded projective right $R$-modules $P$ with the addition $[P]+[Q]=[P\oplus Q]$ and the left $\Gamma$-action $(\gamma, [P])\mapsto [(\gamma^{-1})P],$ then the map $[v]\mapsto [vL_K(E)]$ and $[q^v_Z]\mapsto [q^v_ZL_K(E)]$ extends to an isomorphism $\gamma_E^\Gamma$ of $M_E^\Gamma$ and $\V^{\Gamma}(L_K(E))$ (\cite[Proposition 5.7]{Ara_et_al_Steinberg}).

\section{Porcupine-quotient graph}
\label{section_porcupine_quotient}

In this section, we generalize the constructions of the quotient and the porcupine graphs by introducing the porcupine-quotient graph corresponding to the quotient of one admissible pair with respect to another admissible pair. By Theorem \ref{theorem_porcupine_quotient}, the Leavitt path algebra of this graph is graded isomorphic to the quotient of two corresponding graded ideals. 

If $H\subseteq G$ are two sets of vertices of $E$, let   
\[B_H^G=\{v\in E^0-H\mid v\mbox{ is an infinite emitter and } 0<|\so^{-1}(v)\cap \ra^{-1}(G-H)|<\infty\}.\]

\begin{definition}
If $(H,S)$ and $(G,T)$ are two admissible pairs of a graph $E$ such that $(H,S)\leq (G,T),$ we let 
\[
\begin{array}{ll}
F_1(G-H, T-S)=\{e_1e_2\ldots e_n\mbox{ is a path of }E\mid \ra(e_n)\in G-H, \so(e_n)\notin (G-H)\cup (T-S)
\}\mbox{ and}\\
F_2(G-H, T-S)=\{p\mbox{ is a path of }E\mid \ra(p)\in T-S,  |p|>0\}.
\end{array}
\]

The {\em porcupine-quotient graph} $(G,T)/(H,S)$ of $(G,T)$ with respect to $(H,S)$ is defined as follows. 
The set of vertices of $(G,T)/(H,S)$ is the  set
\[(G-H)\cup (T-S)\cup \{w^p\mid p\in F_1(G-H,T-S)\cup F_2(G-H,T-S)\}\cup \{v'\mid v\in ((G\cup T)-S)\cap B^G_H
\}.\]  
The set of edges of $(G,T)/(H,S)$ is the set 
\[\{e\in E^1\mid \ra(e)\in G-H\mbox{ and either }\so(e)\in G-H\mbox{ or }\so(e)\in T-S\}\cup\]\[\{f^p\mid p\in F_1(G-H,T-S)\cup F_2(G-H,T-S)\}\cup \{e'\mid \ra(e)\in ((G\cup T)-S)\cap B^G_H\}.\] 

The source and range of an edge of $(G,T)/(H,S)$ which is also in $E^1$ are the same as in $E.$ 

If $e\in E^1\cap(F_1(G-H, T-S)\cup F_2(G-H, T-S)),$ we let $\so(f^e)=w^e$ and $\ra(f^e)=\ra(e).$
If $p=eq$ where $e\in E^1,$ $q\in F_1(G-H, T-S)\cup F_2(G-H, T-S),$ and $|q|>0,$ let $\so(f^p)=w^p$ and $\ra(f^p)=w^q.$

If $\ra(e)\in ((G\cup T)-S)\cap B^G_H,$ we let $\ra(e')= \ra(e)'.$ If $\ra(e)\in (G-S)\cap B_H^G$ and if $\so(e)\in (G-H)\cup (T-S),$ we let $\so(e')=\so(e).$ If $\so(e)\notin (G-H)\cup (T-S),$ then either $\so(e)\notin G\cup T$ or $\so(e)\in T\cap S.$ In either case,  $e\in F_1(G-H, T-S)$ and we let $\so(e')=w^e.$ If $\ra(e)\in (T-S)\cap B_H^G,$ then $e\in F_2(G-H, T-S)$ and we let $\so(e')=w^e.$   

If $S=T=\emptyset,$ we write $(G, \emptyset)/(H,\emptyset)$ shorter as $G/H.$
\label{definition_porcupine_quotient} 
\end{definition}

If $(G,T)=(E^0, \emptyset),$ the porcupine-quotient graph is exactly the quotient graph $E/(H,S)$ since $F_1(E^0-H, \emptyset)=F_2(E^0-H, \emptyset)=\emptyset,$  $B^{E^0}_H=B_H,$ and $(E^0-S)\cap B_H=B_H-S$ so the added vertices and edges are exactly as in $E/(H,S).$ 

If $(H,S)=(\emptyset, \emptyset),$ the porcupine-quotient graph is exactly the porcupine graph $P_{(G,T)}$ since $ F_1(G-\emptyset,T-\emptyset)= F_1(G,T),$ $F_2(G-\emptyset,T-\emptyset)= F_2(G,T),$ and $(G\cup T)\cap B^G_{\emptyset}=\emptyset$ 
because if $v$ is an infinite emitter in $G\cup T,$ then $v$ emits infinitely many edges to $G,$ so $v$ is not in  $B^G_{\emptyset}.$ 

We present some examples illustrating the construction. 

\begin{example}
\begin{enumerate}
\item Let $E$ be the graph $\xymatrix{   
\bullet_{u_0}  & \bullet_{v_0}  & \bullet_{w_0}  \\   
\bullet_{u_1}\ar[r]^e\ar[u] & \bullet_{v_1}\ar[r]^g\ar[u]_h & \bullet_{w_1}\ar[u] }$
and let $H=\{w_0, w_1\}$ and $G=H\cup\{v_0, v_1\}.$ 
Then, $G/H$ is the graph $\xymatrix{   
& \bullet_{v_0}    \\   
\bullet\ar[r]^{f^e} & \bullet_{v_1}\ar[u]_h }.$ 
The quotient $I(G)/I(H)$ 
is generated, as a graded $*$-algebra, by three elements of degree zero, $v_0+I(H),$ $v_1+I(H),$ and $ee^*+I(H),$ two elements of degree one, $e+I(H)$ and $h+I(H),$ and one element of degree two, $eh+I(H).$  
The Leavitt path algebra of the porcupine-quotient graph is generated by three elements of degree zero, $v_0,$ $v_1,$ and $f^e(f^e)^*,$ two elements of degree one, $f^e$ and $h,$ and the path $f^eh$ of degree two. The correspondence mapping the generators of $L_K(G/H)$ to the generators of $I(G)/I(H)$ in the order listed above extends to a graded $*$-homomorphism $L_K(G/H)\to I(G)/I(H)$ (this also follows from the proof of Theorem \ref{theorem_porcupine_quotient}).  

The porcupine graph of $H$ is 
$\xymatrix{   
&& \bullet_{w_0}    \\   
\bullet\ar[r]^{f^{eg}}&\bullet\ar[r]^{f^g} & \bullet_{w_1}\ar[u]  }$ and the quotient graph $E/G$ is 
$\xymatrix{   
\bullet_{u_0}  \\   
\bullet_{u_1}\ar[u]}.$ The chain $\emptyset\leq H\leq G\leq E^0$ is such that the porcupine-quotient graph of each two consecutive terms is cofinal. 

\item Let $E$ be the graph $\xymatrix{\bullet_v\ar@(lu,ld)_e
\ar@{.} @/_1pc/ [r] _{\mbox{ } } \ar@/_/ [r] \ar [r] \ar@/^/ [r] \ar@/^1pc/ [r] &\bullet_w}.$ For $H=\{w\}$ and $B_H=\{v\},$ $(H, \{v\})/(H, \emptyset)$ is the graph $\xymatrix{ \ar@{.}[r]&\bullet\ar[r]^{f^{eee}}&\bullet\ar[r]^{f^{ee}}&\bullet\ar[r]^{f^{e}}& \bullet_v}.$ If $g_1,g_2,\ldots$ are the edges $v$ emits to $w,$ the porcupine graph of $(H,\emptyset)$ is 
$\xymatrix{ \ar@{.}[r]&\bullet\ar[r]^{f^{eeg_1}}&\bullet\ar[r]^{f^{eg_1}}&\bullet\ar[dr]^{f^{g_1}}&\\\ar@{.}[r]&\bullet\ar[r]^{f^{eeg_2}}&\bullet\ar[r]^{f^{eg_2}}&\bullet\ar[r]^{f^{g_2}}& \bullet_w\\\ar@{.}[r]&\bullet\ar[r]^{f^{eeg_3}}&\bullet\ar[r]^{f^{eg_3}}&\bullet\ar[ur]_{f^{g_3}}&\\&&\ar@{.}[r]&\bullet\ar@{.}[uur]&}.$
The quotient $E/(H,\{v\})$ is $\xymatrix{\bullet_v\ar@(lu,ld)_e}.$ The chain $(\emptyset,\emptyset)\leq(H, \emptyset)\leq(H, \{v\})\leq(E^0, \emptyset)$ is such that the porcupine-quotient graph of each two consecutive terms is cofinal. 
\end{enumerate}
\label{example_three_porcupine_quotients}
\end{example}

The following example generalizes the last example and exhibits a scenario appearing in the proof of Theorem \ref{theorem_comp_series}. 

\begin{example}
Let $E$ be any graph and $H$ be a hereditary and saturated set with $B_H$ nonempty. Let $S\subsetneq S\cup \{v\}\subseteq B_H.$ We describe the porcupine-quotient $(H, S\cup \{v\})/(H, S).$ As $B_H^H=\emptyset,$ no vertices of the form $v'$ are present. We also have that $F_1(\emptyset, \{v\})=\emptyset,$ so the only vertices of this graph beside $v$ are the vertices of the form $w^p$ for $p\in F_2(\emptyset, \{v\}).$ The vertex $v$ is a sink and each vertex of the form $w^p$ emits only one edge. For each $p\in  F_2(\emptyset, \{v\})$ there is only one path from $w^p$ to $v$ and there are neither cycles, infinite emitters, nor infinite paths in this graph. By Lemma \ref{lemma_saturated_closure}, condition (3a) of Theorem \ref{theorem_graded_simple} holds. Hence,  $(H, S\cup \{v\})/(H, S)$ is cofinal. 
\label{example_with_B_H_and_emptyset} 
\end{example}

\begin{remark} {\bf The porcupine-quotient graph versus the relative quotient graph.}
If $E$ is any graph and $H$ and $G$ are two hereditary and saturated sets of vertices such that $H\subseteq G$, the authors of \cite{Roozbeh_Alfilgen_Jocelyn} define the quotient $Q$ of $G$ with respect to $H$ as the graph with  
\[Q^0=G-H\mbox{ and }Q^1=\{e\in E^1\mid \so(e)\in G, \ra(e)\notin H\}\]
and $\so$ and $\ra$ relations the same as in $E.$ This construction is different than the porcupine-quotient $G/H,$ so we refer to it as the {\em relative quotient}. Even if $E$ is row-finite, the two constructions are different since the vertices of the porcupine-quotient $G/H$ are not only the vertices of $G-H$ but also the vertices of the form $w^p$ for $p\in F_1(G-H)=\{p=e_1\ldots e_n\mid \ra(e_n)\in G-H, \so(e_n)\notin G\}.$ For example, if $E$ is the graph from part (1) of Example \ref{example_three_porcupine_quotients} and $H$ and $G$ as in that example, then the porcupine-quotient graph is $\xymatrix{   
& \bullet_{v_0}    \\   
\bullet\ar[r]^{f^e} & \bullet_{v_1}\ar[u]_h }$ and the relative quotient graph is $\xymatrix{   
\bullet_{v_0}    \\   
\bullet_{v_1}\ar[u]_h}.$ While the porcupine-quotient graph retains the information on the number of paths ending in $G-H$, this information is lost in the relative quotient graph. By Example \ref{example_three_porcupine_quotients} (and, also, by Theorem \ref{theorem_porcupine_quotient}), the Leavitt path algebra of the porcupine-quotient is graded isomorphic to the quotient $I(G)/I(H).$ We claim that the Leavitt path algebra of the relative quotient is not isomorphic to $I(G)/I(H).$ 

The quotient $I(G)/I(H)$ 
is generated, as a graded $*$-algebra, by the six elements listed in Example \ref{example_three_porcupine_quotients}. As a $*$-algebra, it is $*$-isomorphic to $\M_3(K)$. On the other hand, the Leavitt path algebra of the relative quotient has only three generators $v_0, v_1,$ and $h$ as a $*$-algebra and it is isomorphic to $\M_2(K)$. The algebras $\M_2(K)$ and $\M_3(K)$ are not isomorphic. 

In \cite{Roozbeh_Alfilgen_Jocelyn}, the  relative quotients are considered only as the underlying graphs of their talented monoids. The talented monoids of the relative and the porcupine-quotients are isomorphic, so both constructions can be used. However, when the talented monoids are considered with their order-units, the constructions are different. Example \ref{example_composition_talented} contains more details of this last point.  
\label{remark_relative_quotient}
\end{remark}

Before proving Theorem \ref{theorem_porcupine_quotient}, we prove an auxiliary lemma which generalizes \cite[Theorem 2.4.8]{LPA_book}.
\begin{lemma} If $E$ is any graph, $(H,S)$ and $(G,T)$ are admissible pairs such that $(H,S)\leq (G,T),$ and $v\in B_G,$ then 
$v^G\in I(H,S)$ if and only if $v
\in S$ and $v$ does not emit any edges to $G-H.$ 
\label{lemma_v^G} 
\end{lemma}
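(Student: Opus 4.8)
The plan is to reduce the statement to two facts recorded in the review of Tomforde's work in Section \ref{subsection_quotient_and_porcupine}: applying ``$I=I(H,S)$ where $H=I\cap E^0$ and $S=\{w\in B_H\mid w^H\in I\}$'' to the graded ideal $I=I(H,S)$ and using injectivity of $(H,S)\mapsto I(H,S)$ gives $I(H,S)\cap E^0=H$ and $S=\{w\in B_H\mid w^H\in I(H,S)\}$. Throughout I will write $Z_G=\so^{-1}(v)\cap\ra^{-1}(E^0-G)$ and $Z_H=\so^{-1}(v)\cap\ra^{-1}(E^0-H)$, so that $v^G=v-\sum_{e\in Z_G}ee^{*}$, and I will use that $v\in B_G$ means $v$ is an infinite emitter, $v\notin G\supseteq H$, and $Z_G$ is finite and nonempty. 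A key bookkeeping identity, valid whenever $v$ emits no edge to $G-H$: since $H\subseteq G$ gives the disjoint decomposition $E^0-H=(G-H)\sqcup(E^0-G)$, the hypothesis $\so^{-1}(v)\cap\ra^{-1}(G-H)=\emptyset$ forces $Z_H=Z_G$ and hence $v^H=v^G$.

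For the ``if'' direction, assume $v\in S$ and that $v$ emits no edge to $G-H$. By the identity above $v^H=v^G$, and since $v\in S\subseteq B_H$ the generator $v^H$ lies in $I(H,S)$; therefore $v^G\in I(H,S)$.

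For the ``only if'' direction, assume $v^G\in I(H,S)$. I first show $v$ emits no edge to $G-H$. Suppose not, say $e_0\in\so^{-1}(v)$ with $u:=\ra(e_0)\in G-H$. Then $e_0\notin Z_G$ (its range lies in $G$), so $e^{*}e_0=0$ for every $e\in Z_G$ by (CK1), whence $v^{G}e_0=ve_0-\sum_{e\in Z_G}e(e^{*}e_0)=ve_0=e_0$ by (E1), and consequently $e_0^{*}v^{G}e_0=e_0^{*}e_0=u$ by (CK1). Since $I(H,S)$ is an ideal containing $v^G$, this gives $u\in I(H,S)\cap E^0=H$, contradicting $u\in G-H$. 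Hence $v$ emits no edge to $G-H$, so by the identity above $Z_H=Z_G$ and $v^H=v^G$. In particular $Z_H$ is finite and nonempty, so $v\in B_H$; and $v^H=v^G\in I(H,S)$, so $v\in\{w\in B_H\mid w^{H}\in I(H,S)\}=S$. This establishes both required conclusions.

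The computation $e_0^{*}v^{G}e_0=\ra(e_0)$ in the contradiction step is the crux of the argument; apart from it, the only point requiring care is verifying that $v$ actually belongs to $B_H$ before invoking the characterization of $S$, which is precisely where the equality $Z_H=Z_G$ (and thus the finiteness and non-emptiness of $Z_H$) is needed. I do not anticipate any further obstacles.
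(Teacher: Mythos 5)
Your proposal is correct and follows essentially the same route as the paper: the forward direction hinges on the computation $e^{*}v^{G}e=\ra(e)$ for an edge $e$ from $v$ into $G-H$ (contradicting $I(H,S)\cap E^0=H$), and the converse uses $v^G=v^H$ when no such edge exists together with the characterization of $S$ as $\{w\in B_H\mid w^H\in I(H,S)\}$. Your write-up is just slightly more explicit about why the cross terms vanish and why $v\in B_H$.
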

\begin{proof}
To show the implication $(\Rightarrow),$ assume that $v^G\in I(H,S).$ If $v$ does not emit any edges to $G-H,$ then $v\in B_H$ and $v^H=v^G\in I(H,S)$ which implies that $v\in S$ by \cite[Theorem 2.4.8]{LPA_book}. If $v$ emits some edges to $G-H,$ let $e$ be one of them.  
As $v^G\in I(H,S),$  $e^*v^Ge=e^*e=\ra(e)\in I(H,S).$ By \cite[Theorem 2.4.8]{LPA_book}, $\ra(e)\in H$ which is a contradiction because $\ra(e)\in G-H.$ 

The converse $(\Leftarrow)$ 
holds since $v\in S$ implies that $v^H\in I(H,S)$ by \cite[Theorem 2.4.8]{LPA_book} and $\so^{-1}(v)\cap \ra^{-1}(G-H)=\emptyset$ implies that $v^G=v^H.$ 
\end{proof}

\begin{theorem} If $(H,S)$ and $(G,T)$ admissible pairs of a graph $E$ such that $(H,S)\leq (G,T),$ then the algebras 
$L_K((G,T)/(H,S))$ and $I(G,T)/I(H,S)$ are graded isomorphic. 
\label{theorem_porcupine_quotient}
\end{theorem}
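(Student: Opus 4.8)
The plan is to build a graded $*$-homomorphism $\phi\colon L_K((G,T)/(H,S))\to I(G,T)/I(H,S)$ using the Universal Property of Leavitt path algebras, then show it is surjective, and finally invoke the Graded Uniqueness Theorem to get injectivity. For the Universal Property one must exhibit, inside the quotient ring $R=I(G,T)/I(H,S)$, a Cuntz--Krieger $(G,T)/(H,S)$-family: idempotents $p_x$ for each vertex $x$ of $(G,T)/(H,S)$ (lying in $R_0$) together with $x_\alpha\in R_1$, $y_\alpha=x_\alpha^*\in R_{-1}$ for each edge $\alpha$, satisfying (V), (E1), (E2), (CK1), (CK2). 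The natural assignment is: for $v\in (G-H)\cup(T-S)$ send the vertex to $v+I(H,S)$; for a vertex of the form $w^p$ with $p=e_1\ldots e_n\in F_1\cup F_2$, send it to $p^*p + I(H,S)=e_n^*\cdots e_1^*e_1\cdots e_n + I(H,S)$ (this is how the porcupine proof in \cite{Lia_porcupine} handles spines, and it produces genuinely homogeneous degree-zero idempotents); for a breaking-type vertex $v'$ with $v\in((G\cup T)-S)\cap B_H^G$ send it to $(v^{H})' $-type element, namely $v+I(H,S)$ minus the sum of $ee^*+I(H,S)$ over the edges $e$ with $\so(e)=v$ and $\ra(e)$ already accounted for in the smaller pair — i.e.\ essentially $v^{G}+I(H,S)$ adjusted so that only edges into $G-H$ survive. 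Edges $e\in E^1$ that remain in the graph go to $e+I(H,S)$ and $e^*+I(H,S)$; the spine edges $f^p$ go to the appropriate partial isometries built from $p$ (for $p=eq$, $f^{p}\mapsto e(q^*q)^{1/2}$-style expression — in fact $f^e\mapsto e\cdot(\text{target idempotent})$, exactly as in the porcupine construction); and the primed edges $e'$ go to $ee^*\cdot(\text{stuff})$ arranged so that (CK2) at $v'$ is forced.

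The key steps, in order: (1) verify each proposed $p_x$ is a nonzero idempotent in $R_0$ — nonzero because, by Lemma \ref{lemma_v^G} and the description of $I(H,S)$ as the span of $pq^*$ with $\ra(p)=\ra(q)\in H$ together with $pv^Hq^*$ terms, none of the chosen homogeneous elements lie in $I(H,S)$; here the hypothesis $(H,S)\le(G,T)$ and the precise definitions of $F_1(G-H,T-S)$, $F_2(G-H,T-S)$, and $B_H^G$ are exactly what guarantee the spine and breaking-vertex elements survive the quotient; (2) check the relations (V)--(CK2) hold for the family in $R$ — (V) and (CK1) are computations with the defining relations of $L_K(E)$, (E1), (E2) are immediate, and the delicate one is (CK2): for a regular vertex $x$ of $(G,T)/(H,S)$ one must match $\sum_{\alpha\in\so^{-1}(x)}x_\alpha x_\alpha^*$ to $p_x$, and the whole point of adding the spine vertices $w^p$ and the primed vertices is that it makes (CK2) hold precisely — e.g.\ a vertex $v\in G-H$ that was regular in $E$ but emits edges to $H$ or to $S$ has, in $(G,T)/(H,S)$, those edges replaced by $f$-edges from spine vertices or becomes irregular, and in $R$ the corresponding Cuntz--Krieger sum reorganizes accordingly because the relevant $ee^*$ terms with $\ra(e)\in H$ die in the quotient; (3) apply the Universal Property (the graded, $*$-preserving version quoted in the Leavitt path algebra subsection) to get a graded $*$-homomorphism $\phi$; (4) show $\phi$ is onto: $I(G,T)/I(H,S)$ is generated as an ideal (hence, being locally unital, as an algebra under multiplication by its own local units) by the images of $G\cup T$ and the $v^G$'s, and each such generator is visibly in the image — vertices of $G-H$ and $T-S$ directly, vertices of $H$ and $S$ because $v+I(H,S)$ for $v\in H$ can be written using CK2-expansions pulling the support out to $G-H\cup(T-S)$ (this is where one uses that $\overline{G-H}\supseteq H$ in the quotient, analogous to the surjectivity argument in \cite{Lia_porcupine}), and the $v^G$ terms via the primed-vertex images; (5) since $p_x\ne 0$ for every vertex $x$, the Graded Uniqueness Theorem \cite[Theorem 2.2.15]{LPA_book} gives that $\phi$ is injective, so $\phi$ is a graded $*$-isomorphism.

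I expect the main obstacle to be step (2), specifically verifying (CK2) uniformly across the several kinds of vertices, and correctly defining the images of the primed edges $e'$ and the breaking vertices $v'$ so that the breaking relation $v'=\sum ee^*$ (sum over the finitely many edges from $v$ into $G-H$, in the quotient graph) is respected. The combinatorics here mirrors the interaction between the quotient-graph construction (which explains the $v'$'s and $e'$'s) and the porcupine construction (which explains the $w^p$'s and $f^p$'s), but one must check that when both mechanisms are active at once — a breaking vertex that also sits below spine vertices, which is exactly the situation in Example \ref{example_three_porcupine_quotients}(2) — the two bookkeeping schemes are compatible. A secondary but real point is making the surjectivity argument rigorous for non-row-finite graphs: one needs the description of $I(G,T)$ from \cite{Tomforde} together with Lemma \ref{lemma_v^G} to know exactly which homogeneous elements generate the quotient, and then to express those generators through the chosen family. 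Once these two points are handled, the rest follows the template of the porcupine-graph proof \cite[Theorem 3.3]{Lia_porcupine} and the quotient-graph proof \cite[Theorem 5.7]{Tomforde}, of which this theorem is the common generalization; indeed, a sanity check at the end is that specializing to $(G,T)=(E^0,\emptyset)$ recovers Tomforde's theorem and specializing to $(H,S)=(\emptyset,\emptyset)$ recovers the porcupine isomorphism, as already noted after Definition \ref{definition_porcupine_quotient}.
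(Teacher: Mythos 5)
Your overall strategy coincides with the paper's: exhibit a Cuntz--Krieger $(G,T)/(H,S)$-family inside $I(G,T)/I(H,S)$, extend it by the Universal Property to a graded $*$-homomorphism, obtain injectivity from the Graded Uniqueness Theorem after checking that the vertex images are not in $I(H,S)$ (which is where Lemma \ref{lemma_v^G} enters), and prove surjectivity by expressing the generators $p+I(H,S)$ for $\ra(p)\in G$ and $p\ra(p)^G+I(H,S)$ for $\ra(p)\in T$ through the family by induction on $|p|$. You also correctly identify (CK2), the compatibility of the spine and breaking-vertex bookkeeping, and the non-row-finite case of surjectivity as the delicate points.

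However, the explicit assignments you write down are not the ones that make the relations hold, and some of them fail before (CK2) is even reached. First, sending $w^p$ to $p^*p+I(H,S)=e_n^*\cdots e_1^*e_1\cdots e_n+I(H,S)$ collapses, by (CK1), to $\ra(p)+I(H,S)$; then distinct spine vertices $w^p\neq w^q$ with $\ra(p)=\ra(q)$ receive equal nonzero images, so (V) fails, and for $p=eq$ no degree-one element can serve as the image of $f^p$ between $\ra(q)+I(H,S)$ and $\ra(p)+I(H,S)=\ra(q)+I(H,S)$ in general. The correct images are $pp^*+I(H,S)$ for $p\in F_1(G-H,T-S)$ and $p\ra(p)^Gp^*+I(H,S)$ for $p\in F_2(G-H,T-S)$, the latter because $\ra(p)\in T-S$ is a breaking vertex of $G$. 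Second, for $v\in T-S$ one has $v\in B_G\subseteq E^0-G$, so $v\notin I(G,T)$ and $v+I(H,S)$ does not even lie in the target algebra; the image must be $v^G+I(H,S)$ (or $v^{G-H}+I(H,S)$ when $v\in B_H^G$). Third, for $v\in(G-S)\cap B_H^G$ the vertex $v$ and its companion $v'$ must split the idempotent, $v\mapsto v^{G-H}+I(H,S)$ and $v'\mapsto v-v^{G-H}+I(H,S)=v^H+I(H,S)$; with your $v\mapsto v+I(H,S)$ the orthogonality $\phi(v)\phi(v')=0$ required by (V) fails. These corrections are precisely the content of the paper's definition of $\phi$; once they are in place, the verification and the surjectivity argument proceed along the lines you outline.
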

\begin{proof}
To shorten the notation in the proof, we let $I=I(H,S),$ $E_v=\so^{-1}(v)\cap \ra^{-1}(G-H)$ for any $v\in E^0,$ and, if $E_v$ is finite and nonempty, we let 
\[v^{G-H}=\sum_{e\in E_v}ee^*.\]
We define a map $\phi:L_K((G,T)/(H,S))\to I(G,T)/I$ by mapping the vertices of $(G,T)/(H,S)$ as follows. 
\begin{center}
\begin{tabular}{lllll}
$v$&$\mapsto$&$ v$ & $+I\;\;$ & if $v\in (G-H)- B_H^G\cup (G\cap S),$\\
$v$&$\mapsto$&$v^{G-H}$ & $ +I\;\;$ & if $v\in ((G\cup T)-S))\cap B_H^G$ \\
$v$&$\mapsto$&$ v^G$ & $ +I\;\;$ & if $v\in (T-S)-B_H^G,$\\
$w^p$&$\mapsto$&$ pp^*$ & $+I$ 
& if $p\in F_1(G-H,T-S),$\\
$w^p$&$\mapsto$&$ p\ra(p)^Gp^*$ & $+I$ 
& if $p\in F_2(G-H,T-S),$\\
$v'$&$\mapsto$&$ v-v^{G-H}$ & $+I$ & if $v\in (G-S)\cap B_H^G,$  \\  
$v'$&$\mapsto$&$ v^G-v^{G-H}$ & $+I$ & if $v\in (T-S)\cap B_H^G.$  \\  
\end{tabular}
\end{center}
One directly checks that the union of the sets in the if-parts of the first three cases is indeed $(G-H)\cup (T-S).$ Note also that
$v\in (G-S)\cap B_H^G,$ then $v\in B_H$ and $v$ does not emit edges outside of $G,$ so $v-v^{G-H}=v^H.$ If $v\in (T-S)\cap B_H^G,$ then $v\in B_H$ and $v^G-v^{G-H}=v^H.$ Thus, 
\[\phi(v')=v^H+I\]
for any $v\in ((G\cup T)-S)\cap B_H^G$ and the last two lines of the above definition can be condensed into one. While the longer, ``non-condensed'' definition of $\phi(v')$ increases clarity of some parts of the following proof, we occasionally use also the ``condensed'' version. 

We define $\phi$ on the edges of $(G,T)/(H,S)$ by  
\begin{center}
\begin{tabular}{llll}
$e$&$\mapsto$&$(e+I)\phi(\ra(e))$ & if $e\in E^1,$ \\
$f^p$&$\mapsto$&$ (e+I)\phi(\ra(f^p))$ & if $p=eq\in F_1(G-H, T-S)\cup F_2(G-H,T-S),$  $e\in E^1,$\\
$e'$&$\mapsto$&$(e+I)\phi(\ra(e)')$ & if $\ra(e)\in ((G\cup T)-S)\cap B_H^G,$ \\ 
\end{tabular}
\end{center}
and we define $\phi$ on the set of ghost edges by  $\phi(g^*)=\phi(g)^*$ for any edge $g$ of the graph $(G,T)/(H,S).$ 

{\bf Extending $\mathbf\phi$ to a graded $*$-homomorphism.} 
One directly checks that the axioms (V) and (CK1) hold and that the part of (E1) involving the range function holds for the images of the vertices and edges of the porcupine-quotient graph. If $e$ is an edge of both the porcupine-quotient graph and of $E$, one checks that $\phi(\so(e))(e+I)=e+I$ so that $\phi(\so(e))\phi(e)=\phi(\so(e))(e+I)\phi(\ra(e))=(e+I)\phi(\ra(e))=\phi(e).$

Let $p=eq\in F_1(G-H, T-S)$ for an edge $e$ and a path $q$. If $|q|>0,$ then  
$\phi(\so(f^p))\phi(f^p)=
\phi(w^p)(e+I)\phi(w^q)=
eqq^*e^*eqq^*+I=eqq^*+I=(e+I)(qq^*+I)= (e+I)\phi(w^q)=\phi(f^p).$ If $|q|=0,$ then 
$\phi(\so(f^e))\phi(f^e)=
\phi(w^e)(e+I)\phi(\ra(e))=
(ee^*e+I)\phi(\ra(e))=(e+I)\phi(\ra(e))= \phi(f^e).$ 
Checking that $\phi(\so(f^p))\phi(f^p)=\phi(f^p)$ for $p\in F_2(G-H, T-S)$ is similar. 

If $e'$ is defined and if $\so(e')=\so(e),$ then  
\[\phi(\so(e'))\phi(e')=\phi(\so(e))(e+I)\phi(\ra(e)')=(e+I)\phi(\ra(e)')=\phi(e').\]
If $e'$ is defined and if $e\in F_1(G-H, T-S),$ so that $\so(e')=w^e,$   
then 
\[\phi(\so(e'))\phi(e')=(ee^*+I)(e+I)\phi(\ra(e)')=(e+I)\phi(\ra(e)')=\phi(e').\]
If $e'$ is defined and if  $e\in F_2(G-H, T-S)$ so that $\so(e')=w^e,$ 
then 
\[\phi(\so(e'))\phi(e')=e\ra(e)^Ge^*e\ra(e)^H+I=e\ra(e)^G\ra(e)^H+I=e\ra(e)^H+I=
(e+I)\phi(\ra(e)')=\phi(e').\]

This shows that (E1) holds. By the definition of $\phi$ on the ghost edges, (E1) holding implies that (E2) also holds. So, it remains to check (CK2). 

If $v\in E^0$ is a regular vertex of $(G,T)/(H,S),$ then either $v$ is a regular vertex of $E$ which is in $G-H$ (hence it does not emit all of its edges to $H$), or $v\in G\cap S,$ or $v\in ((G\cup T)-S)\cap B_H^G.$ In any case, the set $E_v$ is nonempty and finite. Let us partition $E_v$ into two sets, $E_{v1}=E_v\cap \ra^{-1}((G-S)\cap B_H^G)$ and $E_{v2}=E_v-E_{v1}.$ Note that any of these two sets can possibly be empty, but not both. If any of them is empty, let $0$ stands for $\sum_{e\in \emptyset}ee^*.$ In $(G,T)/(H,S),$ $v$ emits the edges $e\in E_v$ and $e'$ for $e\in E_{v1}$ and we have that
\[\sum_{e\in E_v}\phi(e)\phi(e^*)+\sum_{e\in E_{v1}}\phi(e')\phi((e')^*)=
\sum_{e\in E_v}(e+I)\phi (\ra(e))(e^*+I)+\sum_{e\in E_{v1}}(e+I)\phi(\ra(e)')(e^*+I)=\]\[\sum_{e\in E_{v1}}(e\ra(e)^{G-H}e^*+I)+
\sum_{e\in E_{v2}}(e\ra(e)e^*+I)+\sum_{e\in E_{v1}}(e(\ra(e)-\ra(e)^{G-H})e^*+I)=\] 
\[\left(\sum_{e\in E_{v1}} e\ra(e)^{G-H}e^*+\sum_{e\in E_{v2}}ee^*+\sum_{e\in E_{v1}}ee^*-\sum_{e\in E_{v1}}e\ra(e)^{G-H}e^*\right)+I=\sum_{e\in E_v}ee^*+I=v^{G-H}+I.\]

It remains to show that $\phi(v)=v^{G-H}+I$ in any of the three possibilities for $v$. 

If $v$ is a regular vertex of $E$ which is in $G-H,$ then $ee^*\in I$ for every $e\in \so^{-1}(v)\cap \ra^{-1}(H),$ so 
\[\phi(v)=v+I=\sum_{e\in \so^{-1}(v)}ee^*+I=\sum_{e\in E_v}ee^*+I=v^{G-H}+I.\]

If $v\in G\cap S,$ then $v$ does not emit any edges outside of $G$ so $v-v^{G-H}=v^H\in I.$ Thus, $\phi(v)=v+I=v^{G-H}+I.$  

If $v\in ((G\cup T)-S)\cap B_H^G,$ $\phi(v)=v^{G-H}+I$ by the definition of $\phi.$  

As the vertices of the form $v'$ are not regular in the porcupine-quotient, it remains to check (CK2) for the vertices of the form $w^p$ for $p\in F_1(G-H,T-S)\cup F_2(G-H,T-S).$ If $p=eq$ for $e\in E^1$ and $|q|>0,$ then $w^p$
emits only $f^p,$ and \[\phi(f^p)\phi((f^p)^*)=(e+I)\phi(w^q)(e^*+I)=eqq^*e^*+I=pp^*+I=\phi(w^p)\] for  $p\in F_1(G-H,T-S)$ and \[\phi(f^p)\phi((f^p)^*)=(e+I)\phi(w^q)(e^*+I)=eq\ra(p)^Gq^*e^*+I=p\ra(p)^Gp^*+I=\phi(w^p)\] for $p\in F_2(G-H,T-S).$

If $p=e\in F_1(G-H,T-S)$ and if $\ra(e)\in (G-S)\cap B_H^G$ then $w^e$ emits two edges, $f^e$ and $e',$ and 
\[\phi(f^e)\phi((f^e)^*)+\phi(e')\phi((e')^*)=e\ra(e)^{G-H}e^*+e(\ra(e)-\ra(e)^{G-H})e^*+I=ee^*+I=\phi(w^e).\]

If $p=e\in F_1(G-H,T-S)$ and $\ra(e)\notin (G-S)\cap B_H^G,$ then $w^e$ emits only $f^e$ and \[\phi(f^e)\phi((f^e)^*)=ee^*+I=\phi(w^e).\]

If $p=e\in F_2(G-H,T-S)$ and $\ra(e)\in (T-S)\cap B_H^G,$ then $w^e$ emits $f^e$ and $e'$ and 
\[\phi(f^e)\phi((f^e)^*)+\phi(e')\phi((e')^*)=e\ra(e)^{G-H}e^*+e(\ra(e)^G-\ra(e)^{G-H})e^*+I=e\ra(e)^Ge^*+I=\phi(w^e).\]
If $p=e\in F_2(G-H,T-S)$ and $\ra(e)\notin (T-S)\cap B_H^G,$ then $\phi(f^e)\phi((f^e)^*)=e\ra(e)^Ge^*+I=\phi(w^e).$

This shows that all five axioms hold for the images of vertices, edges and ghost edges of $(G,T)/(H,S).$ By the Universal Property, $\phi$ extends to a unique homomorphism, which we denote also by $\phi,$ of $L_K((G,T)/(H,S)).$ The map $\phi$ is a $*$-homomorphism since the images of vertices are selfadjoint and by the definition of $\phi$ on the ghost edges. The map $\phi$ is graded because the vertices are mapped to the elements of degree zero and the edges to the elements of degree one.

{\bf Showing injectivity.}
To use the Graded Uniqueness Theorem and conclude that $\phi$ is injective, we need to check that the images of the vertices are not in $I.$ This is clear for the vertices in $(G-H)-B^G_H\cup (G\cap S)$ because they are in $E^0-H,$ so they are not elements of $I.$ By Lemma \ref{lemma_v^G}, $\phi(v)=v^G+I\neq I$ for $v\in (T-S)-B_H^G$ and $\phi(v')=v^H+I\neq I$ as $v\notin S$ for  $v\in ((G\cup T)-S)\cap B_H^G.$ 

If $v\in ((G\cup T)-S)\cap B_H^G,$ assuming that $\phi(v)=v^{G-H}+I=I$ implies that $\ra(e)=e^*ee^*e=e^*v^{G-H}e\in I$ for any $e\in E_v.$ This is a contradiction since $\ra(e)\notin H$ by the definition of $E_v.$ 

Assuming that $pp^*\in I$ for some $p\in F_1(G-H, T-S)$ implies that  $\ra(p)=p^*pp^*p\in I$ which is a contradiction as $\ra(p)\in G-H.$ Similarly,
assuming that $p\ra(p)^Gp^*\in I$ for some $p\in F_2(G-H, T-S)$ implies that $\ra(p)^G=p^*p\ra(p)^Gp^*p\in I$ which is a contradiction by Lemma \ref{lemma_v^G} as $\ra(p)\in T-S.$

{\bf Showing surjectivity.}
As $\phi$ is a $*$-homomorphism, to show surjectivity of $\phi$, it is sufficient to show that $p+I$ is in the image of $\phi$ for every path $p$ such that $\ra(p)\in G$, and that $p\ra(p)^G+I$ is in the image of $\phi$ for every path $p$ such that $\ra(p)\in T$. We refer to these conditions as cases 1 and 2. 

{\bf Case 1 for paths of zero length.}
For $p=v\in G,$ we consider two cases:  
$v\in ((G-S)-B_H^G)\cup (G\cap S)$ and $v\in (G-S)\cap B_H^G.$ In the first case, $\phi(v)=v+I$ if $v\notin H$ and $\phi(0)=I=v+I$ if $v\in H.$
In the second case, $\phi(v+v')=v^{G-H}+v-v^{G-H}+I=v+I.$ 
 
{\bf Case 2 for paths of zero length.} 
If $p=v\in T,$ we consider three cases: $v\in (T-S)-B^G_H,$ $v\in (T-S)\cap B^G_H,$ and $v\in S\cap T.$ In the first case, $\phi(v)=v^G+I$ by the definition of $\phi.$ In the second case, $\phi(v+v')=v^{G-H}+v^G-v^{G-H}+I=v^G+I.$

If $v\in S\cap T$, then
either $v^H=v^G$ and $v\notin B_H^G$ or $v\in B_H^G.$ In the first case, $\phi(0)=I=v^H+I=v^G+I.$ In the second case, 
the set $E_v$ is nonempty and finite, $v^G-v^{G-H}=v^H\in I,$ and $e$ is in $F_1(G-H, T-S)$ for every $e\in E_v.$ Thus,  
\[\phi\left(\sum_{e\in E_v}w^e\right)=\sum_{e\in E_v}\phi(w^e)=\sum_{e\in E_v}ee^*+I=v^{G-H}+I=
v^G+I.\] 

{\bf Case 1 for paths of positive lengths.}
If $\ra(p)\in H$, then $p\in I$ and $\phi(0)=p+I.$ So, consider a path $p$ with $\ra(p)\in G-H.$ Let $p = eq$ with $e\in E^1,$ and either $q=e_1e_2\ldots e_n$ for some $n\geq 1$ or $q=\ra(e).$ We use induction and  assume that $q+I=\phi(x)$ for some $x\in L_K((G,T)/(H,S)).$

As $\ra(p)\in G-H,$ let us consider whether there is a prefix (possibly improper) of $p$ which is in $F_1(G-H, T-S)$ or whether every prefix of $p$ is not in $F_1(G-H, T-S).$ In the first case, we consider the cases when the prefix in $F_1(G-H, T-S)$ is $ee_1\ldots e_i$ for some $i\leq n$ and when the prefix is $e$ (in which case $|q|$ is possibly zero). 

If $ee_1\ldots e_i\in F_1(G-H, T-S),$ for some $i\leq n,$ then
\[\phi(f^{ee_1\ldots e_i}x)=(e+I)\phi(w^{e_1\ldots e_i})\phi(x)=ee_1\ldots e_i(e_1\ldots e_i)^*q+I=ee_1\ldots e_ie_{i+1}\ldots e_n+I=eq+I=p+I.\]

If $e\in F_1(G-H, T-S),$ we check whether 
$\ra(e)\in (G-S)-B_H^G\cup(G\cap S),$ $\ra(e)\in (G-S)\cap B_H^G$ and $q$ has positive length, or $\ra(e)\in (G-S)\cap B_H^G$ and $q$ has zero length. 
In the first case,
\[\phi(f^e)\phi(x)=(e+I)\phi(\ra(e))(q+I)=e\ra(e)q+I=p+I.\]
In the second case, note that $\ra(e_1)\in G-H$ as $\ra(e)=\so(e_1)\in G-H$ and $G$ is hereditary. Hence,  
\[\phi(f^e)\phi(x)=(e+I)\phi(\ra(e))(q+I)=e\ra(e)^{G-H}q+I=ee_1e_1^*e_1e_2\ldots e_n+I=eq+I=p+I.\]
In the third case,
\[\phi(e)+\phi(e')=(e+I)\phi(\ra(e))+(e+I)\phi(\ra(e)')=(e+I)(\ra(e)^{G-H}+\ra(e)-\ra(e)^{G-H}+I)=e\ra(e)+I=e+I.\]

If each prefix of $p$ is not in $F_1(G-H, T-S),$ then either $\ra(e)\in G-H$ and $\so(e)\in (G-H)\cup (T-S),$ or $\ra(e)\notin G-H,$ $\ra(e_i)\in G-H,$ and $\so(e_i)\in T-S$ for some $i\geq 1$ in which case we let $i$ be the largest such $i\leq n.$ 

In the first case, if $\ra(e)\notin (G-S)\cap B_H^G,$ then $\phi(e)=(e+I)\phi(\ra(e))=(e+I)(\ra(e)+I)=e+I$ so $\phi(ex)=eq+I=p+I.$ If $\ra(e)\in (G-S)\cap B_H^G,$ then $\phi(e+e')=e\ra(e)^{G-H}+e\ra(e)-e\ra(e)^{G-H}+I=e+I$ so $\phi((e+e')x)=eq+I=p+I.$

In the second case, if $i>1,$ 
then $ee_1\ldots e_{i-1}\in F_2(G-H, T-S)$ and  
\[\phi(f^{ee_1\ldots e_{i-1}}x)=
ee_1\ldots e_{i-1}\so(e_i)^Ge_{i-1}^*\ldots e_1^*e_1\ldots e_n+I=\]\[ee_1\ldots e_{i-1}\so(e_i)^Ge_i\ldots e_n+I=ee_1\ldots e_{i-1}e_i\ldots e_n+I=eq+I=p+I\] 
where $\so(e_i)^Ge_i=\so(e_i)e_i=e_i$ because $e_i$ has its range in $G.$ Similarly, if $i=1,$ then  $\phi(f^ex)=e\ra(e)^Gq+I=eq+I=p+I.$

{\bf Case 2 for paths of positive lengths.}
For $p = eq$ with $\ra(p)\in T, e\in E^1$ and $q$ a path of $E$, we also use induction, so let us assume that $\phi(x)=q+I$ for some $x\in L_K((G,T)/(H,S)).$

If $\ra(p)\in T-S,$ then $p\in F_2(G-H, T-S)$ and \[\phi(f^{eq}x)=(e+I)\phi(w^q)(q+I)=eq\ra(p)^Gq^*q+I=p\ra(p)^G+I.\]

If $\ra(p)\in S\cap T,$ then either $\ra(p)$ emits no edges to $G-H$ and $\ra(p)^G=\ra(p)^H\in I$ so that $\phi(0)=p\ra(p)^G+I,$ or $\ra(p)$ emits nonzero and finitely many edges to $G-H$ and  
$pg\in F_1(G-H, T-S)$ for every $g\in E_{\ra(p)}.$ As $\ra(p)^G-\ra(p)^{G-H}=\ra(p)^H\in I,$ we have that 
\[\phi\left(\sum_{g\in E_{\ra(p)}}f^{pg}x\right)=\sum_{g\in E_{\ra(p)}}(e+I)\phi(w^{qg})(q+I)=\sum_{g\in E_{\ra(p)}}eqgg^*q^*q+I=eq\ra(p)^{G-H}+I=p\ra(p)^G+I.\]
This shows that $\phi$ is surjective, and concludes the proof. 
\end{proof}

\subsection{The graph monoid and the talented monoid of a porcupine-quotient graph}

In this section (as well as in sections \ref{section_composition_series} and \ref{section_talented}), $\Gamma$ is the infinite cyclic group on a generator $t.$ By \cite[Theorems 3.6.23 and 2.5.8]{LPA_book} and \cite[Theorem 5.11]{Ara_et_al_Steinberg} the following four lattices are isomorphic.
\begin{center}
\begin{tabular}{ll}
(1) The lattice of admissible pairs of $E$.& (2) The lattice of graded ideals of $L_K(E).$\\
(3) The lattice of order-ideals of $M_E.$ & (4) The lattice of $\Gamma$-order-ideals of $M_E^\Gamma.$  
\end{tabular}
\end{center}
We recall these isomorphisms. If $(H,S)$ is an admissible pair of a graph $E,$ let $I(H,S)$ be the graded ideal of $L_K(E)$ generated by $\{v\mid v\in H\}\cup\{v^H\mid v\in S\},$ let $J(H,S)$ be the order-ideal of $M_E$ generated by $\{[v]\mid v\in H\}\cup\{[v^H]\mid v\in S\},$ and let $J^\Gamma(H,S)$ be the $\Gamma$-order-ideal  of $M_E^\Gamma$ generated by the same elements as $J(H,S)$. The element $(H,S)$ of the first lattice corresponds to the elements $I(H,S), J(H,S),$ and  $J^\Gamma(H,S)$ of the second, the third and the fourth lattice, respectively. 

The natural isomorphism $\gamma_E: M_E\to \V(L_K(E))$ (see section \ref{subsection_graph_monoid}) maps the generators of $J(H,S)$ to the elements which generate $\V(I(H,S)).$ So, the restriction of $\gamma_E$ to $J(H,S),$ mapping $J(H,S)$ to $\V(I(H,S)),$ is onto. Hence, this restriction is an isomorphism. The same argument applies in the graded case and so the restriction of $\gamma^\Gamma_E: M_E^\Gamma\to \V^\Gamma(L_K(E))$ to $J^\Gamma(H,S)$ is an isomorphism of $J^\Gamma(H,S)$ and $\V^\Gamma(I(H,S)).$ 

By \cite[Proposition 3.6.17]{LPA_book} (formulated for any ring generated by idempotents), there is a canonical injective homomorphism
$\omega:\V(L_K(E))/\V(I(H,S))\to\V(L_K(E)/I(H,S))$ such that  
$[u]+\V(I(H,S))\mapsto [u+I(H,S)]$ for an idempotent $u$ of $L_K(E).$ We review the argument from the proof of  \cite[Theorem 3.6.23]{LPA_book} showing that $\omega$ is onto. For $v\in E^0-H$ and $Z\subseteq \so^{-1}(v)\cap \ra^{-1}(E^0-H)$ finite but possibly empty, the elements of the form $[v-\sum_{e\in Z} ee^*+I(H,S)]$ generate
$\V(L_K(E)/I(H,S)).$ As such elements are in the image of $\omega,$ $\omega$ is onto.  

It is direct to check that \cite[Proposition 3.6.17]{LPA_book} holds for $\Gamma$-graded rings generated by homogeneous idempotents and so there is an  injective homomorphism
$\omega^\Gamma: \V^\Gamma(L_K(E))/\V^\Gamma(I(H,S))\to\V^\Gamma(L_K(E)/I(H,S))$ of pre-ordered $\Gamma$-monoids mapping $[u]+\V^\Gamma(I(H,S))\mapsto [u+I(H,S)]$ for a homogeneous idempotent $u$ of $L_K(E).$ The same argument for showing that $\omega$ is onto applies to $\omega^\Gamma,$ so $\omega^\Gamma$ is an isomorphism.

We use similar arguments to show the proposition below. We use the above definitions of $J(H,S)$ and $J^\Gamma(H,S)$ for an admissible pair $(H,S)$ in the statement of the proposition.  

\begin{proposition}
If $(H,S)$ and $(G,T)$ admissible pairs of a graph $E$ such that $(H,S)\leq (G,T),$ then there is a pre-ordered monoid isomorphism $M_{(G,T)/(H,S)}\cong J(G,T)/J(H,S)$ and a pre-ordered $\Gamma$-monoid isomorphism $M^\Gamma_{(G,T)/(H,S)}\cong J^\Gamma(G,T)/J^\Gamma(H,S).$   
\label{proposition_monoids_of_porcupine_quotient}
\end{proposition}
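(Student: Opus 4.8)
The plan is to mirror the structure already used in the excerpt for the isomorphisms $\omega$ and $\omega^\Gamma$, but now with $I(G,T)$ in place of $L_K(E)$. First I would invoke Theorem \ref{theorem_porcupine_quotient} to get a graded $*$-isomorphism $\phi\colon L_K((G,T)/(H,S))\to I(G,T)/I(H,S)$. Applying the functors $\V$ and $\V^\Gamma$ yields monoid isomorphisms
\[
M_{(G,T)/(H,S)}\;\cong\;\V\bigl(L_K((G,T)/(H,S))\bigr)\;\cong\;\V\bigl(I(G,T)/I(H,S)\bigr)
\]
and likewise in the $\Gamma$-graded setting, using the natural isomorphisms $\gamma_E$ and $\gamma_E^\Gamma$ from section \ref{subsection_graph_monoid}. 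So the whole statement reduces to identifying $\V(I(G,T)/I(H,S))$ with $J(G,T)/J(H,S)$ and $\V^\Gamma(I(G,T)/I(H,S))$ with $J^\Gamma(G,T)/J^\Gamma(H,S)$.

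For that identification I would argue exactly as the excerpt does for $\omega$, but applied to the ring $I(G,T)$ rather than to $L_K(E)$. By Lemma \ref{lemma_graded_ideal_transitive}, $I(H,S)$ is a (graded) ideal of $I(G,T)$; moreover $I(G,T)$ is (graded) locally unital, being (graded) isomorphic to a Leavitt path algebra via the porcupine construction. Now \cite[Proposition 3.6.17]{LPA_book} — which only needs a ring generated by (homogeneous) idempotents, a condition $I(G,T)$ satisfies since it is spanned by elements $pq^*$ and inherits enough local units of the form $\sum v$ — gives a canonical injective homomorphism
\[
\omega'\colon \V(I(G,T))/\V(I(H,S))\;\longrightarrow\;\V\bigl(I(G,T)/I(H,S)\bigr),\qquad [u]+\V(I(H,S))\mapsto[u+I(H,S)],
\]
and similarly $\omega'^\Gamma$ in the graded case. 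The restriction of $\gamma_E$ (resp. $\gamma_E^\Gamma$) to $J(H,S)$ is, as recalled in the excerpt, an isomorphism onto $\V(I(H,S))$ (resp. $\V^\Gamma(I(H,S))$); the same argument with $(G,T)$ shows the restriction to $J(G,T)$ is an isomorphism onto $\V(I(G,T))$. Hence $\V(I(G,T))/\V(I(H,S))\cong J(G,T)/J(H,S)$ as pre-ordered monoids (the quotient by an order-ideal being functorial under order-ideal-preserving isomorphisms), and analogously for the $\Gamma$-monoids.

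It remains to check that $\omega'$ is surjective, and here I would reuse the excerpt's surjectivity argument verbatim with $I(G,T)$ replacing $L_K(E)$: $\V(I(G,T)/I(H,S))$ is generated by classes $[u+I(H,S)]$ for idempotents $u$ of $I(G,T)$ — concretely by the classes of the images of the generators $v$ and $q^v_Z$ listed in Theorem \ref{theorem_porcupine_quotient}'s proof, each of which is an idempotent of $I(G,T)$ — and all such classes lie in the image of $\omega'$ by definition. The same works for $\omega'^\Gamma$ with homogeneous idempotents. Chaining the isomorphisms gives
\[
M_{(G,T)/(H,S)}\cong \V\bigl(I(G,T)/I(H,S)\bigr)\cong \V(I(G,T))/\V(I(H,S))\cong J(G,T)/J(H,S),
\]
and the parallel chain for $M^\Gamma$. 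The only genuinely delicate point — the potential obstacle — is confirming that \cite[Proposition 3.6.17]{LPA_book} and its surjectivity argument go through for the non-unital ring $I(G,T)$: one must be sure that $I(G,T)$ is generated by (homogeneous) idempotents and locally unital so that the $\V$-exactness-type statement applies, but this is guaranteed by the porcupine isomorphism of Lemma \ref{lemma_graded_ideal_transitive} together with \cite[Theorem 3.3]{Lia_porcupine}. Everything else is a routine transport of the $\omega$-argument from the ambient algebra to the ideal.
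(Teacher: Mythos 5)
Your proposal is correct and follows essentially the same route as the paper's own proof: both pass through $M_{(G,T)/(H,S)}\cong \V(L_K((G,T)/(H,S)))\cong \V(I(G,T)/I(H,S))$ via Theorem \ref{theorem_porcupine_quotient}, apply the idempotent-generated version of \cite[Proposition 3.6.17]{LPA_book} to $I(G,T)$ to get the injective map $\omega$, verify surjectivity via the generating classes $[v-\sum_{e\in Z}ee^*+I(H,S)]$, and identify $\V(I(G,T))/\V(I(H,S))$ with $J(G,T)/J(H,S)$ through the restrictions of $\gamma_E$ (and analogously for the $\Gamma$-monoids). The delicate point you flag, that $I(G,T)$ is locally unital and generated by (homogeneous) idempotents so the quotient argument applies, is exactly the hypothesis the paper notes parenthetically.
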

\begin{proof}
We have that $M_{(G,T)/(H,S)}\cong \V(L_K((G,T)/(H,S)))\cong \V(I(G,T)/I(H,S))$ 
where the first isomorphism is $\gamma_{(G,T)/(H,S))}$ and the second is induced by the isomorphism from Theorem \ref{theorem_porcupine_quotient}. 
By \cite[Proposition 3.6.17]{LPA_book}, there is a canonical injective homomorphism
$\omega:\V(I(G,T))/\V(I(H,S))\to \V(I(G,T)/I(H,S))$ such that $[u]+\V(I(H,S))\mapsto [u+I(H,S)]$ for any idempotent $u$ of $I(G,T).$ The elements of the form $[v-\sum_{e\in Z} ee^*+I(H,S)],$ where $v\in G-H$ and $Z\subseteq \so^{-1}(v)\cap \ra^{-1}(G-H)$ is finite but possibly empty, generate $\V(I(G,T)/I(H,S))$ and such elements are in the image of $\omega.$ Thus, $\omega$ is onto. Lastly, $\V(I(G,T))/\V(I(H,S))\cong J(G,T)/J(H,S)$ since the restrictions of $\gamma_E$ to $J(G,T)$ and $J(H,S)$ respectively, are isomorphisms $J(G,T)\to \V(I(G,T))$ and $J(H,S)\to \V(I(H,S)).$

The argument for the $\Gamma$-monoids is completely analogous. We use the $\Gamma$-monoid version of \cite[Proposition 3.6.17]{LPA_book} to obtain an injective homomorphism $\omega^\Gamma: \V^\Gamma(I(G,T))/\V^\Gamma(I(H,S))\to \V^\Gamma(I(G,T)/I(H,S))$ such that $[u]+\V^\Gamma(I(H,S))\mapsto [u+I(H,S)]$ for any homogeneous idempotent $u$ of $I(G,T).$
The map $\omega^\Gamma$ is onto by the same argument as for $\omega.$ Thus, we have the isomorphisms 
\[M^\Gamma_{(G,T)/(H,S)}\cong \V^\Gamma(L_K((G,T)/(H,S)))\cong \V^\Gamma(I(G,T)/I(H,S))\cong\]\[ \V^\Gamma(I(G,T))/\V^\Gamma(I(H,S))\cong
J^\Gamma(G,T)/J^\Gamma(H,S)
\]
where the first one is $\gamma_{(G,T)/(H,S)},$ the existence of the second follows from Theorem \ref{theorem_porcupine_quotient}, the third is the inverse of $\omega^\Gamma,$ and the last one is induced by the restrictions of $\gamma^\Gamma_E.$
\end{proof}

\section{Composition series of graphs}
\label{section_composition_series}

If $E$ is any graph and $K$ a field, a {\em (graded) composition series of length $n$} of $L_K(E)$ is a chain of (graded) ideals \[\{0\}=I_0 \lneq I_1\lneq \ldots\lneq I_n=L_K(E)\] such that the (graded) algebra $I_{i+1}/I_i$ is (graded) simple for all $i=0, \ldots, n-1.$ 
By Lemma \ref{lemma_graded_ideal_transitive}, requiring that $I_i$ is a (graded) ideal of $I_{i+1}$ for all $i=0,\ldots, n-1$ is equivalent to requiring that $I_i$ is a (graded) ideal of the entire algebra.  
The algebra $L_K(E)$ has a {\em (graded) composition series} if there is a positive integer $n$ such that $L_K(E)$ has a (graded) composition series of length $n$.  We also note that increasing, not necessarily finite, chains of graded ideals with simple quotients of specific type were considered in  \cite[Theorem 6.4]{Ranga_Fin_Gen}.
% above dodato re comment 2 of referee 2.

Theorem \ref{theorem_porcupine_quotient} enables us to characterize the existence of a graded composition series in purely graph-theoretic terms using the following definition.  

A graph $E$ has a {\em composition series of length $n$} if there is a chain of admissible pairs \[(\emptyset, \emptyset)=(H_0, S_0) \lneq (H_1, S_1)\lneq \ldots\lneq (H_n, S_n)=(E^0, \emptyset)\] such that the porcupine-quotient graph $(H_{i+1}, S_{i+1})/(H_i, S_i)$ is cofinal for all $i=0, \ldots, n-1.$ If $S_i=\emptyset$ for all $i,$ we write the above chain shorter as $\emptyset=H_0\lneq H_1\lneq \ldots\lneq H_n=E^0.$ The graph $E$ has a {\em composition series} if $E$ has a composition series of length $n$ for some positive integer $n.$  

For example, let $E$ be the graph from part (1)  of Example \ref{example_three_porcupine_quotients} and $H$ and $G$ be as in the same example. Then $\emptyset\leq H\leq G\leq E^0$ is a composition series of $E.$ If $E$ is the graph from part (2) of Example \ref{example_three_porcupine_quotients} and $H$ is as in that same example, then $(\emptyset, \emptyset)\leq (H, \emptyset)\leq (H, B_H)\leq (E^0, \emptyset)$ is a composition series of $E$.

Theorem \ref{theorem_porcupine_quotient}  has the following direct corollary. 

\begin{corollary}
If $E$ is any graph, the following conditions are equivalent. 
\begin{enumerate}[\upshape(1)]
\item The algebra $L_K(E)$ has a graded composition series. \hskip.2cm {\em (2)} The graph $E$ has a composition series.  
\end{enumerate} 
\label{corollary_composition_equivalent}
\end{corollary}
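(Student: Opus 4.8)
The plan is to derive Corollary \ref{corollary_composition_equivalent} directly from Theorem \ref{theorem_porcupine_quotient} together with the lattice isomorphism between admissible pairs of $E$ and graded ideals of $L_K(E)$, and the fact (recalled in the excerpt) that a Leavitt path algebra is graded simple if and only if its underlying graph is cofinal. The two directions are symmetric translations of each other, so the argument amounts to checking that a chain of admissible pairs and the corresponding chain of graded ideals have ``cofinal quotients'' and ``graded simple quotients'', respectively, at exactly the same places.

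First I would prove $(2)\Rightarrow(1)$. Suppose $(\emptyset,\emptyset)=(H_0,S_0)\lneq (H_1,S_1)\lneq\cdots\lneq(H_n,S_n)=(E^0,\emptyset)$ is a composition series of $E$. Set $I_i=I(H_i,S_i)$. Since $(H,S)\mapsto I(H,S)$ is a lattice isomorphism onto the lattice of graded ideals, each $I_i$ is a graded ideal, $I_0=\{0\}$, $I_n=L_K(E)$, and the strict inclusions of admissible pairs give strict inclusions $I_0\lneq I_1\lneq\cdots\lneq I_n$. For each $i$, the inclusion $(H_i,S_i)\leq(H_{i+1},S_{i+1})$ is exactly the condition $I_i\subseteq I_{i+1}$, so Theorem \ref{theorem_porcupine_quotient} gives a graded isomorphism $L_K\big((H_{i+1},S_{i+1})/(H_i,S_i)\big)\cong I_{i+1}/I_i$. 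By hypothesis the graph $(H_{i+1},S_{i+1})/(H_i,S_i)$ is cofinal, hence its Leavitt path algebra is graded simple, and therefore so is $I_{i+1}/I_i$. Thus $I_0\lneq\cdots\lneq I_n$ is a graded composition series of $L_K(E)$.

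For $(1)\Rightarrow(2)$, I would run the argument in reverse. Given a graded composition series $\{0\}=I_0\lneq I_1\lneq\cdots\lneq I_n=L_K(E)$, apply the inverse lattice isomorphism to write each $I_i=I(H_i,S_i)$ for a uniquely determined admissible pair $(H_i,S_i)$; then $(H_0,S_0)=(\emptyset,\emptyset)$, $(H_n,S_n)=(E^0,\emptyset)$, and the strict inclusions of ideals translate to strict inclusions of admissible pairs, with $(H_i,S_i)\leq(H_{i+1},S_{i+1})$ since $I_i\subseteq I_{i+1}$. By Theorem \ref{theorem_porcupine_quotient}, $L_K\big((H_{i+1},S_{i+1})/(H_i,S_i)\big)\cong I_{i+1}/I_i$ as graded algebras, and $I_{i+1}/I_i$ is graded simple by hypothesis; hence $L_K\big((H_{i+1},S_{i+1})/(H_i,S_i)\big)$ is graded simple, so the graph $(H_{i+1},S_{i+1})/(H_i,S_i)$ is cofinal. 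This exhibits the required composition series of $E$.

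There is essentially no serious obstacle here: the whole content is packaged into Theorem \ref{theorem_porcupine_quotient} (the hard result) and the standard correspondences. The only points that deserve an explicit word are that strictness of inclusions is preserved by the lattice isomorphism in both directions (immediate, since it is an order isomorphism), and that the graded-simple $\Leftrightarrow$ cofinal equivalence is being invoked for possibly infinite graphs and without any Artinian hypothesis on $L_K(E)$ — which is exactly the form recalled in the excerpt. So the ``proof'' is really just the observation that the two chains match up term by term under the lattice isomorphism and that Theorem \ref{theorem_porcupine_quotient} identifies the consecutive subquotients.
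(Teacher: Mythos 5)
Your proof is correct and is exactly the argument the paper intends: the paper gives no explicit proof, simply calling the statement a ``direct corollary'' of Theorem \ref{theorem_porcupine_quotient}, and your write-up supplies precisely the translation via the lattice isomorphism between admissible pairs and graded ideals together with the graded simple/cofinal equivalence. No gaps.
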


The existence of a composition series of a graph is equivalent to the existence of such series of both the porcupine and the corresponding quotient graph as we show next. 
We note that a similar claim has been shown 
for $\Gamma$-refinement monoids in \cite[Lemma 2.11]{Roozbeh_Alfilgen_Jocelyn}.

\begin{proposition} If $(H,S)$ is an admissible pair of a graph $E$, then $E$ has a composition series if and only if $P_{(H,S)}$ and $E/(H,S)$ have composition series.\label{proposition_porcupine_and_quotient}
\end{proposition}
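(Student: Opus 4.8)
The plan is to relate a composition series of $E$ to composition series of $P_{(H,S)}$ and $E/(H,S)$ by means of the lattice isomorphism between admissible pairs and graded ideals, together with the identification of the relevant Leavitt path algebras via the porcupine and quotient constructions and, crucially, via the porcupine-quotient construction of Theorem \ref{theorem_porcupine_quotient}. Write $I=I(H,S)$. Recall from subsection \ref{subsection_quotient_and_porcupine} that $L_K(P_{(H,S)})\cong_{\mathrm{gr}} I$ and $L_K(E/(H,S))\cong_{\mathrm{gr}} L_K(E)/I$. By Lemma \ref{lemma_graded_ideal_transitive}, graded ideals of $I$ are exactly the graded ideals of $L_K(E)$ contained in $I$, and graded ideals of $L_K(E)/I$ correspond to graded ideals of $L_K(E)$ containing $I$; under the admissible-pair correspondence these are the pairs $\leq (H,S)$ and the pairs $\geq (H,S)$, respectively. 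So a composition series of $P_{(H,S)}$ is the same data as a chain $(\emptyset,\emptyset)=(H_0,S_0)\lneq\cdots\lneq(H_k,S_k)=(H,S)$ with each consecutive porcupine-quotient cofinal (using Theorem \ref{theorem_porcupine_quotient} applied to $(H_i,S_i)\leq(H_{i+1},S_{i+1})$, whose LPA is the graded-simple quotient $I(H_{i+1},S_{i+1})/I(H_i,S_i)$), and a composition series of $E/(H,S)$ is the same data as a chain $(H,S)=(G_0,T_0)\lneq\cdots\lneq(G_m,T_m)=(E^0,\emptyset)$ with consecutive porcupine-quotients cofinal.

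For the direction ($\Leftarrow$): given such chains for $P_{(H,S)}$ and for $E/(H,S)$, concatenate them at $(H,S)$ to obtain a chain $(\emptyset,\emptyset)\lneq\cdots\lneq(H,S)\lneq\cdots\lneq(E^0,\emptyset)$ in the admissible-pair lattice of $E$ itself. Each consecutive porcupine-quotient graph in the first half is a porcupine-quotient graph computed inside $P_{(H,S)}$; I need to check that it equals the porcupine-quotient graph computed inside $E$ (and likewise for the second half inside $E/(H,S)$). This is where one must be slightly careful, and I expect it to be the main technical point: one must verify that the construction of $(H_{i+1},S_{i+1})/(H_i,S_i)$ in Definition \ref{definition_porcupine_quotient} is insensitive to whether it is carried out in $E$, in $P_{(H,S)}$, or in $E/(H,S)$ — equivalently, that $L_K$ of the porcupine-quotient depends only on the isomorphism type of the ideal-quotient $I(H_{i+1},S_{i+1})/I(H_i,S_i)$, which it does by Theorem \ref{theorem_porcupine_quotient} since that quotient can be computed either as a quotient of graded ideals of $L_K(E)$ or, via $L_K(P_{(H,S)})\cong_{\mathrm{gr}} I$ and Lemma \ref{lemma_graded_ideal_transitive}, as a quotient of graded ideals of $L_K(P_{(H,S)})$. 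Granting this, cofinality of every consecutive porcupine-quotient graph of the concatenated chain is exactly the hypothesis, so $E$ has a composition series.

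For the direction ($\Rightarrow$): suppose $E$ has a composition series $(\emptyset,\emptyset)=(H_0,S_0)\lneq\cdots\lneq(H_n,S_n)=(E^0,\emptyset)$. Intersect this chain with the principal ideal $(H,S)$ in the admissible-pair lattice (i.e. take meets $(H_i,S_i)\wedge(H,S)$, which exist by \cite[Proposition 2.5.6]{LPA_book}) to get a chain from $(\emptyset,\emptyset)$ to $(H,S)$, and take joins $(H_i,S_i)\vee(H,S)$ to get a chain from $(H,S)$ to $(E^0,\emptyset)$. After discarding repetitions these become strictly increasing chains, and the standard Jordan–Hölder/Zassenhaus-type argument at the level of graded ideals shows each consecutive quotient of the meet-chain (resp. join-chain) is graded isomorphic to a subquotient of some graded-simple factor $I(H_{i+1},S_{i+1})/I(H_i,S_i)$, hence is either zero (collapsed) or graded simple. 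Translating back through Theorem \ref{theorem_porcupine_quotient}, the surviving consecutive porcupine-quotient graphs are cofinal, giving composition series of $P_{(H,S)}$ and of $E/(H,S)$. The one point to watch here is termination: each of the two refined chains has length at most $n$, so both are finite, as required. The main obstacle throughout is bookkeeping the three ambient graphs consistently; the algebraic content is entirely supplied by Theorem \ref{theorem_porcupine_quotient}, Lemma \ref{lemma_graded_ideal_transitive}, and the lattice isomorphism, with the modular-lattice behaviour of meets and joins of admissible pairs handling the intersection argument.
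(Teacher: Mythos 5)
Your argument is correct and follows essentially the same route as the paper's: both reduce via Corollary \ref{corollary_composition_equivalent} to graded composition series of the algebras, obtain the forward direction by intersecting the chain of graded ideals with $I(H,S)$ and projecting modulo $I(H,S)$ (your meets and joins of admissible pairs are exactly this under the lattice isomorphism), and obtain the converse by concatenating the two chains, with Lemma \ref{lemma_graded_ideal_transitive} guaranteeing that graded ideals of $I(H,S)$ are graded ideals of $L_K(E)$. The ambient-graph issue you flag is resolved in the paper the same way you propose, namely by working with the graded isomorphisms $L_K(P_{(H,S)})\cong I(H,S)$ and $L_K(E/(H,S))\cong L_K(E)/I(H,S)$ rather than comparing the porcupine-quotient graphs themselves.
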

\begin{proof}
By Corollary \ref{corollary_composition_equivalent}, it is sufficient to consider the graded ideals and graded composition series of the related Leavitt path algebras. Let $I=I(H,S).$

If $\{0\}=I_0\lneq \ldots\lneq I_n=L_K(E)$ is a graded composition series of $L_K(E),$ then it is direct to check that $\{0\}=I_0\cap I\lneq \ldots\lneq I_n\cap I=I$ produces a graded composition series of $I.$ Each term of this series is graded isomorphic to a graded ideal of $L_K(P_{(H,S)})$ and these graded ideals constitute a graded composition series of $L_K(P_{(H,S)})$. It is also direct to check that $\{I\}=(I_0+I)/I\lneq \ldots\lneq (I_n+I)/I=L_K(E)/I$ is a graded composition series of $L_K(E)/I.$ Each term of this series is graded isomorphic to a graded ideal of $L_K(E/(H,S))$ and the images of the terms of the series constitute a graded composition series of $L_K(E/(H,S)).$

Conversely, if $\{0\}=I'_0\lneq \ldots\lneq I'_n=L_K(P_{(H,S)})$ is a graded composition series of $L_K(P_{(H,S)}),$ the images $I_i$ of $I_i'$ for $i=0,\ldots,n$ under the graded isomorphism of $L_K(P_{(H,S)})$ and $I$ produce a graded composition series of $I.$  Similarly, if $\{0\}=J'_0\lneq \ldots\lneq J'_m=L_K(E/(H,S))$ is a graded composition series of $L_K(E/(H,S)),$ it uniquely determines the graded ideals $\{I\}=J_0/I\lneq \ldots\lneq J_m/I=L_K(E)/I$ of $L_K(E)/I$ which constitute a graded composition series of $L_K(E)/I.$ The ideals $I_0,\ldots, I_n=J_0, \ldots, J_m$ are graded ideals of $L_K(E)$ by Lemma  \ref{lemma_graded_ideal_transitive} and so \[\{0\}=I_0\lneq I_1\lneq\ldots\lneq I_n=I=J_0\lneq J_1\lneq\ldots\lneq J_m=L_K(E)\]
is a graded composition series of $L_K(E).$ 
\end{proof}

A {\em composition series of length $n$} of the graph monoid $M_E$ is a chain of order-ideals \[\{0\}=I_0 \lneq I_1\lneq \ldots\lneq I_n=M_E\] such that the monoid $I_{i+1}/I_i$ is simple (i.e., without any nontrivial and improper order-ideals) for all $i=0, \ldots, n-1.$  The monoid $M_E$ has a {\em composition series} if $M_E$ has a composition series of length $n$ for some 
positive integer $n.$  

We recall that $\Gamma$ is the infinite cyclic group on a generator $t$. A {\em composition series of length $n$} of the talented monoid $M_E^\Gamma$ is a chain of $\Gamma$-order-ideals \[\{0\}=I_0 \lneq I_1\lneq  \ldots\lneq I_n=M_E^\Gamma\] such that the $\Gamma$-monoid $I_{i+1}/I_i$ is simple  (i.e., without any nontrivial and improper $\Gamma$-order-ideals) for all $i=0, \ldots, n-1.$  The monoid $M_E^\Gamma$ has a {\em composition series} if $M_E^\Gamma$ has a composition series of length $n$ for some positive integer $n.$  

By \cite[Theorem 3.25]{Alfilgen_Jocelyn}, if $M_E^\Gamma$ has a composition series, then any two composition series have the same length (and the composition factors are isomorphic up to a permutation). This implies the second part of the following corollary. 

\begin{corollary}%menjan tekst corollary re comment 12
If $E$ is any graph, the conditions from Corollary \ref{corollary_composition_equivalent} are equivalent to any of the conditions below. 
\begin{enumerate}[\upshape(1)]
\item[{\em (3)}] The monoid $M_E$ has a composition series. \hskip1cm
{\em (4)} The $\Gamma$-monoid $M_E^\Gamma$ has a composition series. 
\end{enumerate}
If these equivalent conditions hold, then each composition series of $E$, of $M_E$ and of $M_E^\Gamma$ and each graded composition series of $L_K(E)$ have the same length.  
\label{corollary_composition_equivalent_with_talented}
\end{corollary}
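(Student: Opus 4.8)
The plan is to establish the equivalence of (3) and (4) with the conditions of Corollary \ref{corollary_composition_equivalent} by transporting everything through the lattice isomorphisms recalled at the start of the subsection preceding Proposition \ref{proposition_monoids_of_porcupine_quotient}, namely that the lattice of admissible pairs of $E$, the lattice of graded ideals of $L_K(E)$, the lattice of order-ideals of $M_E$, and the lattice of $\Gamma$-order-ideals of $M_E^\Gamma$ are all isomorphic, with an admissible pair $(H,S)$ corresponding to $I(H,S)$, $J(H,S)$, and $J^\Gamma(H,S)$ respectively. Since these are lattice isomorphisms, a chain of admissible pairs $(\emptyset,\emptyset)=(H_0,S_0)\lneq\cdots\lneq(H_n,S_n)=(E^0,\emptyset)$ corresponds to a strictly increasing chain of order-ideals $\{0\}=J(H_0,S_0)\lneq\cdots\lneq J(H_n,S_n)=M_E$, and likewise for $M_E^\Gamma$; conversely every such chain of order-ideals arises this way. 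The content to be verified is that ``cofinal porcupine-quotient'' on the admissible-pair side matches ``simple composition factor'' on each monoid side.

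First I would record the key translation: by Proposition \ref{proposition_monoids_of_porcupine_quotient}, for consecutive pairs $(H_i,S_i)\leq(H_{i+1},S_{i+1})$ we have $M_{(H_{i+1},S_{i+1})/(H_i,S_i)}\cong J(H_{i+1},S_{i+1})/J(H_i,S_i)$ as pre-ordered monoids and $M^\Gamma_{(H_{i+1},S_{i+1})/(H_i,S_i)}\cong J^\Gamma(H_{i+1},S_{i+1})/J^\Gamma(H_i,S_i)$ as pre-ordered $\Gamma$-monoids. Combining this with Theorem \ref{theorem_porcupine_quotient} and the isomorphisms $\gamma_E$, $\gamma_E^\Gamma$ between the graph monoids and the $\V$-monoids, one sees that the following are equivalent for fixed consecutive pairs: the porcupine-quotient graph $(H_{i+1},S_{i+1})/(H_i,S_i)$ is cofinal; its Leavitt path algebra is graded simple; the graded quotient $I(H_{i+1},S_{i+1})/I(H_i,S_i)$ is graded simple; the order-ideal quotient $J(H_{i+1},S_{i+1})/J(H_i,S_i)$ has no nontrivial proper order-ideals; the $\Gamma$-order-ideal quotient $J^\Gamma(H_{i+1},S_{i+1})/J^\Gamma(H_i,S_i)$ has no nontrivial proper $\Gamma$-order-ideals. (That a Leavitt path algebra is graded simple iff the underlying graph is cofinal is the fact invoked throughout the paper; that graded simplicity of $L_K(F)$ corresponds to simplicity of $M_F$ and of $M_F^\Gamma$ follows because the lattices of graded ideals, order-ideals, and $\Gamma$-order-ideals are isomorphic, applied to $F=(H_{i+1},S_{i+1})/(H_i,S_i)$.) Given this chain of equivalences, a composition series of $E$ produces, term by term, composition series of $M_E$, of $M_E^\Gamma$, and a graded composition series of $L_K(E)$, all of the same length $n$; and conversely any composition series of $M_E$ or of $M_E^\Gamma$ corresponds under the lattice isomorphism to a chain of admissible pairs which, by the same equivalences, is a composition series of $E$. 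This proves (1)$\Leftrightarrow$(3)$\Leftrightarrow$(4) together with Corollary \ref{corollary_composition_equivalent}, establishing the first assertion.

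For the second assertion — that all these composition series have a common length — I would argue as follows. The lattice isomorphism sends a composition series of $M_E^\Gamma$ to a composition series of $E$ of the same length, and to a graded composition series of $L_K(E)$ of the same length, and to a composition series of $M_E$ of the same length; so it suffices to show that any two composition series of $M_E^\Gamma$ have the same length, which is exactly \cite[Theorem 3.25]{Alfilgen_Jocelyn}. Thus every composition series of $M_E^\Gamma$, hence (via the correspondence) every composition series of $E$, every composition series of $M_E$, and every graded composition series of $L_K(E)$, has the same length.

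The main obstacle I anticipate is being careful with the compatibility of the various ideal lattices at the level of \emph{quotients} rather than just ideals: one must be sure that the isomorphism of lattices is compatible with forming subquotients, so that ``$J(H_{i+1},S_{i+1})/J(H_i,S_i)$ is simple'' is genuinely equivalent to ``$(H_i,S_i)$ and $(H_{i+1},S_{i+1})$ are consecutive admissible pairs with cofinal porcupine-quotient.'' This is precisely where Proposition \ref{proposition_monoids_of_porcupine_quotient} is doing the essential work, reducing the subquotient $J(H_{i+1},S_{i+1})/J(H_i,S_i)$ to the graph monoid of an honest graph, so that the simplicity criterion becomes cofinality of that graph; once this is in hand the rest is a routine unwinding of the lattice correspondences.
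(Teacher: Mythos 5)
Your proposal is correct and follows essentially the same route as the paper: the equivalence is obtained by combining Proposition \ref{proposition_monoids_of_porcupine_quotient} with the lattice isomorphisms so that cofinality of each porcupine-quotient translates into simplicity of the corresponding subquotient monoid, and the common-length claim is reduced to the Jordan--H\"{o}lder theorem for $\Gamma$-monoids from \cite[Theorem 3.25]{Alfilgen_Jocelyn}. Your write-up merely spells out in more detail what the paper compresses into ``follows directly from Proposition \ref{proposition_monoids_of_porcupine_quotient}.''
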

\begin{proof}
The first sentence follows directly from  Proposition \ref{proposition_monoids_of_porcupine_quotient}. Any two composition series of $E$ of lengths $m$ and $n$ respectively give rise to two composition series of $M_E^\Gamma$ by Proposition \ref{proposition_monoids_of_porcupine_quotient}. By \cite[Theorem 3.25]{Alfilgen_Jocelyn}, $m=n.$ Analogous arguments can be used for graded composition series of $L_K(E)$ and for composition series of $M_E.$ 
\end{proof}

By \cite[Theorem 3.29]{Alfilgen_Jocelyn}, if $M_E^\Gamma$ has a composition series, then there are no strictly increasing or strictly decreasing infinite chains of $\Gamma$-order-ideals. This result, Corollary \ref{corollary_composition_equivalent_with_talented}, and Proposition \ref{proposition_monoids_of_porcupine_quotient} have the following corollary. 

\begin{corollary}
If $E$ is any graph and if there is a sequence $(H_n, S_n), n=0,1,\ldots$ of admissible pairs of $E$ such that either \[(\emptyset, \emptyset)\lneq (H_0, S_0)\lneq (H_1, S_1)\lneq\ldots\;\;\mbox{ or }\;\;(E^0, \emptyset)\gneq (H_0, S_0)\gneq (H_1, S_1)\gneq\ldots\] holds and the  chain never becomes constant, then $E$ does not have a composition series.   
\label{corollary_equivalence}
\end{corollary}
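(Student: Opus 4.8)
The plan is to derive this corollary directly from the results already established, reasoning by contraposition: assume $E$ has a composition series and show that no strictly increasing or strictly decreasing infinite chain of admissible pairs which never becomes constant can exist. By Corollary \ref{corollary_composition_equivalent_with_talented}, if $E$ has a composition series, then $M_E^\Gamma$ has a composition series. The cited result \cite[Theorem 3.29]{Alfilgen_Jocelyn} then tells us that $M_E^\Gamma$ has no strictly increasing and no strictly decreasing infinite chains of $\Gamma$-order-ideals.

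First I would recall the lattice isomorphism between the lattice of admissible pairs of $E$ and the lattice of $\Gamma$-order-ideals of $M_E^\Gamma$, given by $(H,S)\mapsto J^\Gamma(H,S)$ (reviewed in the excerpt just before Proposition \ref{proposition_monoids_of_porcupine_quotient}). Since this is a lattice isomorphism, it is in particular an order isomorphism, so it carries strictly increasing (resp. strictly decreasing) chains of admissible pairs to strictly increasing (resp. strictly decreasing) chains of $\Gamma$-order-ideals, and it carries a chain that never becomes constant to a chain that never becomes constant. Thus a sequence $(\emptyset,\emptyset)\lneq (H_0,S_0)\lneq (H_1,S_1)\lneq\ldots$ that never stabilizes would produce a strictly increasing infinite chain $J^\Gamma(\emptyset,\emptyset)\lneq J^\Gamma(H_0,S_0)\lneq J^\Gamma(H_1,S_1)\lneq\ldots$ of $\Gamma$-order-ideals of $M_E^\Gamma$ that never stabilizes, contradicting \cite[Theorem 3.29]{Alfilgen_Jocelyn}; symmetrically for the decreasing case, using that $(E^0,\emptyset)$ corresponds to the improper $\Gamma$-order-ideal $M_E^\Gamma$.

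There is essentially no obstacle here — the corollary is a formal consequence of the previously cited facts. The only point requiring a little care is to make explicit that the correspondence $(H,S)\mapsto J^\Gamma(H,S)$ preserves the relevant order-theoretic notions (strict inequality and non-stabilization), which follows immediately from it being a lattice isomorphism. It is worth noting that one could equally phrase the argument through $M_E$ (using the chain-condition consequences of $M_E$ having a composition series) or through the graded ideals of $L_K(E)$, but routing it through $M_E^\Gamma$ and \cite[Theorem 3.29]{Alfilgen_Jocelyn} is the most direct. I would write the proof in two or three sentences, invoking Corollary \ref{corollary_composition_equivalent_with_talented}, the lattice isomorphism, and \cite[Theorem 3.29]{Alfilgen_Jocelyn}.
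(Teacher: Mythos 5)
Your proposal is correct and follows essentially the same route as the paper: translate the chain of admissible pairs into a chain of $\Gamma$-order-ideals of $M_E^\Gamma$ via the lattice isomorphism, invoke \cite[Theorem 3.29]{Alfilgen_Jocelyn} to rule out non-stabilizing monotone chains when a composition series exists, and conclude with Corollary \ref{corollary_composition_equivalent_with_talented}. The only cosmetic difference is that you argue by contraposition while the paper argues directly, and the paper cites Proposition \ref{proposition_monoids_of_porcupine_quotient} where you cite the lattice isomorphism reviewed just before it; these are the same content.
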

\begin{proof}
Consider the $\Gamma$-order-ideals of the admissible pairs to obtain an infinite chain of either strictly increasing or strictly decreasing $\Gamma$-order-ideals of $M_E^\Gamma$ using Proposition \ref{proposition_monoids_of_porcupine_quotient}. By \cite[Theorem 3.29]{Alfilgen_Jocelyn}, $M_E^\Gamma$ does not have a composition series. By Corollary \ref{corollary_composition_equivalent_with_talented}, $E$ does not have a composition series.
\end{proof}

If $E$ is  a row-finite graph, the authors of \cite{Roozbeh_Alfilgen_Jocelyn} define a composition series of $M_E^\Gamma$ analogously as we do above (see 
\cite[Definition 2.8]{Roozbeh_Alfilgen_Jocelyn}) but relate it to admissible pairs of $E$ using the relative quotients (see Remark \ref{remark_relative_quotient}), not the porcupine-quotients. The next example illustrates  the differences between the two quotients on the $\Gamma$-monoid level if the order-units are considered. 
 
\begin{example}
\label{example_composition_talented}
Let $E, G,$ and $H$ be as in part (1) of Example \ref{example_three_porcupine_quotients}. Recall that $\emptyset\leq H\leq G\leq E^0$ is a graded composition series of $E$. The three related porcupine-quotients are below 
\[\xymatrix{   
&& \bullet_{w_0}    \\   
\bullet_{w^{eg}}\ar[r]&\bullet_{w^g}\ar[r] & \bullet_{w_1}\ar[u]  }\hskip2cm
\xymatrix{   
& \bullet_{v_0}    \\   
\bullet_{w^e}\ar[r] & \bullet_{v_1}\ar[u]  }\hskip2cm\xymatrix{   
\bullet_{u_0}  \\   
\bullet_{u_1}\ar[u]}\]
and their Leavitt path algebras are graded isomorphic to $\M_4(K)(0,1,2,3),$ $\M_3(K)(0,1,2),$ and $\M_2(K)(0,1)$
respectively. The usual matrix algebras are considered as graded
algebras here and the grading is given by: $x\in M_n(K)$ is in the $m$-th component of
$\M_n(K)(k_1, \ldots, k_n)$ if $x_{ij}\in K_{m-k_i+k_j}$ for all
$i,j=1,\ldots, n$ (more details can be found in \cite[Section
1.3]{Roozbeh_book} or  \cite[Section 2.1]{Lia_realization}).
In this example, the numbers in parenthesis following the usual matrix algebra notation correspond to the lengths of paths of the graphs ending at the sink of the graphs (see
\cite[Proposition 5.1]{Roozbeh_Lia_Ultramatricial}). 

The algebras $\M_4(K)(0,1,2,3),$ $\M_3(K)(0,1,2),$ and $\M_2(K)(0,1)$ are
graded isomorphic to the three quotients of graded ideals $I(H)/I(\emptyset), I(G)/I(H),$ and $L_K(E)/I(G)$ by Theorem
\ref{theorem_porcupine_quotient}. On the other hand, the three relative quotients are $\xymatrix{   
\bullet_{w_0}  \\   
\bullet_{w_1}\ar[u]}\hskip2cm
\xymatrix{   
\bullet_{v_0}  \\   
\bullet_{v_1}\ar[u]}\hskip2cm\xymatrix{   
\bullet_{u_0}  \\   
\bullet_{u_1}\ar[u]}$
and the Leavitt path algebras of these  graphs are graded isomorphic to $\M_2(K)(0,1).$ Thus, the algebras of the first two relative quotients are not isomorphic the quotients $I(H)$ and $ I(G)/I(H)$ respectively.
 
The talented monoid of any of the six graphs above is isomorphic to $\Zset^+[t,t^{-1}]$ consisting of Laurent polynomials with nonnegative integer coefficients. However, if we consider the talented monoids together with their order-units (see \cite[Section 3.6.1]{Roozbeh_book}, \cite[Section 2.5]{Lia_realization}, or \cite[Section 1.1]{Roozbeh_Lia_Comparable} for relevant definitions), the triple 
\[(\Zset^+[t,t^{-1}], 1+t^{-1}+t^{-2}+t^{-3}),\;\; (\Zset^+[t,t^{-1}], 1+t^{-1}+t^{-2})\mbox{ and }(\Zset^+[t,t^{-1}], 1+t^{-1})\]
is different from the triple
$(\Zset^+[t,t^{-1}], 1+t^{-1}),\;\; (\Zset^+[t,t^{-1}], 1+t^{-1})\mbox{ and }(\Zset^+[t,t^{-1}], 1+t^{-1}).$
\end{example} 

\section{The four-color characterization of graded simple Leavitt path algebras}
\label{section_graded_simple}

We pause with the consideration of composition series until section \ref{section_comp_series_characterization}. In this section, we introduce the fourth type of vertices which are terminal in the same sense as the sinks and the vertices of cycles which are either without exists or extreme and show Theorem \ref{theorem_graded_simple}.

By \cite[Lemma 3.7.10]{LPA_book}, every vertex of a graph with finitely many vertices connects to a sink, a cycle with no exits, or an extreme cycle. However, in a graph with infinitely many vertices, that does not have to happen as it is the case for the graph below. \[\xymatrix{\bullet\ar@/_1pc/ [r] \ar@/^1pc/ [r] &\bullet\ar @/_1pc/ [r] \ar@/^1pc/ [r]& \bullet\ar @/_1pc/ [r] \ar@/^1pc/ [r]& \bullet\ar@{.}[r]&}\]

The following proposition generalizes \cite[Lemma 3.7.10]{LPA_book} to graphs of arbitrary cardinality.      
 
\begin{proposition}
If $E$ is any graph, each vertex of $E$ connects to a sink, an extreme cycle, a cycle without exits, or it is on an infinite path containing the vertices  $v_0>v_1>\ldots.$ 
\label{proposition_existence_of_the_four_types}
\end{proposition}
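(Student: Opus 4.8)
The plan is to start from an arbitrary vertex $v$ and walk forward through the graph, trying to reach one of the three "good" terminal configurations (sink, extreme cycle, cycle without exits); if at no point we can reach such a configuration, I want to extract instead an infinite descending path $v_0 > v_1 > \ldots$ (that is, an infinite path on which no vertex connects back to an earlier one), which is the fourth alternative. Concretely, I would argue by contraposition: assume $v$ does not connect to a sink, an extreme cycle, or a cycle without exits, and produce the infinite descending path. The key observation is that under this assumption, the tree $T(v)$ contains no sinks at all (a vertex connecting to a sink in $T(v)$ would make $v$ connect to that sink), and every cycle met inside $T(v)$ has an exit but is not extreme — so for every cycle $c$ with $c^0 \subseteq T(v)$ there is a path $p$ with $\so(p)\in c^0$ whose range $\ra(p)$ does not connect back to any vertex of $c$, i.e. $\ra(p)\notin R(c^0)$.

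The main construction is inductive. Having built a finite path $v = v_0 \to v_1 \to \cdots \to v_n$ with the property that $v_j \not\geq v_i$ whenever $i < j$ (so all the $v_i$ are distinct and the path is "strictly descending" in the pre-order), I want to choose an edge out of $v_n$ extending it while preserving this property. Since $v_n$ is not a sink, it emits at least one edge. The issue is that $v_n$ might only emit edges whose ranges connect back to some earlier $v_i$. I would handle this by a "fast-forward" step: if every edge out of $v_n$ has range in $R(\{v_0,\dots,v_n\})$, then in particular $v_n \geq v_i$ for some $i \le n$; taking $i$ minimal, the vertices $v_i, \dots, v_n$ together with a return path from $v_n$ to $v_i$ trace out a closed path, from which one extracts a cycle $c$ inside $T(v)$. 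By the contrapositive hypothesis this cycle is not extreme and is not exit-free, so — using the non-extremeness in the form $T(c^0) \not\subseteq R(c^0)$ — there is a path $q$ starting on $c$ with $\ra(q) \notin R(c^0)$; following $q$ we reach a vertex $u \in T(v)$ that does not connect back to the cycle. I would then restart the descending path from $u$ (or splice it on appropriately), noting that $u$ still lies in $T(v)$ so the hypotheses persist, and $u \not\geq$ any vertex of the cycle, so no earlier vertex is revisited. Iterating, at each stage either we extend by a genuine edge to a new vertex not connecting back, or we perform a fast-forward; in both cases the descending path grows, and the limit is an infinite path containing an infinite strictly descending chain $v_0 > v_1 > \cdots$.

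The hard part will be organizing the bookkeeping so that the fast-forward steps cannot stall the construction — i.e. showing the process genuinely produces an infinite path rather than looping among finitely many cycles. The resolution is that each fast-forward step strictly enlarges the set of cycles we have "escaped from": the vertex $u$ we jump to satisfies $u \not\geq c^0$, so $u$ cannot lie on or connect to any previously encountered cycle, and hence no cycle is escaped twice. If the construction only ever did finitely many extension steps between fast-forwards and then ran out, we could trace the situation back to a single cycle from which no non-returning path exits, contradicting non-extremeness; so the path is infinite. A cleaner way to phrase the whole argument, which I would also consider, is to work with the condensation (the DAG of strongly connected components restricted to $T(v)$): the hypothesis says this DAG has no terminal nodes that are sinks or "closed" components, so König's lemma / a straightforward greedy descent through the component-DAG yields either arrival at a good terminal component or an infinite descending ray of components, and an infinite ray of components lifts to an infinite descending path of vertices. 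Either way, the one genuinely substantive input is the characterization of "extreme" as $T(c^0)\subseteq R(c^0)$, already recorded in section \ref{subsection_graphs}, which is exactly what lets a non-extreme cycle be exited permanently.
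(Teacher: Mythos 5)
Your proposal is correct and takes essentially the same route as the paper's proof: both build a chain $v_0>v_1>\ldots$ by following an arbitrary outgoing edge when this makes progress and, upon meeting a cycle $c$, using the failure of extremeness (a path $p$ with $\so(p)\in c^0$ and $\ra(p)\notin R(c^0)$) to escape it permanently, so that the process either terminates at one of the three classical terminal configurations or yields the infinite strictly decreasing path. The only differences are organizational: the paper handles the cycle case proactively whenever the current vertex lies on a cycle rather than waiting to get stuck, and your concern about ``stalling'' is unnecessary since every step, fast-forward or not, strictly lengthens the path and adds a new distinguished vertex.
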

\begin{proof}
Let $v_0\in E^0$ be arbitrary. If $v_0$ is a sink or on a cycle which is extreme or without exits, the claim holds for $v_0.$ Otherwise, if $v_0$ is on a cycle $c$, then $c$ has an exit but it is not extreme. So, there is a path $p_0$ with $\so(p_0)=v_0$ and $\ra(p_0)\notin R(v_0).$ If $v_0$ is not on a cycle and as  $v_0$ is not a sink, $v_0$ emits edges and we let $p_0=e$ for any $e\in\so^{-1}(v_0).$ In either case, $v_1=\ra(p_0)\notin R(v_0),$ so $v_0>v_1.$   

Consider then $v_1.$ If $v_1$ is a sink, on a cycle without exits or on an extreme cycle, the claim holds for $v_1$ and, hence, for $v_0$ also. If not, then either $v_1$ is on a cycle emitting a path $p_1$ such that $\ra(p_1)\notin R(v_1),$ or $v_1$ is not on a cycle and it emits an edge in which case we let $p_1$ be that edge. In either case, $v_2=\ra(p_1)\in T(v_1)-R(v_1)$ which implies that $v_0>v_1>v_2.$ Continuing this process either terminates after finitely many steps resulting in a path from $v_0$ to a sink or a cycle which is either extreme or without exits, or the process does not terminate after finitely many steps and we obtain an infinite path containing vertices with the required properties.   
\end{proof} 

By Proposition \ref{proposition_existence_of_the_four_types}, the cofinality of a graph $E$ can be characterized in terms of the equivalence relation of $E^{\leq\infty}$ given by 
\begin{center}
$p \sim q\;\;$ if $\;R(p^0)=R(q^0).$ 
\end{center}
\begin{corollary}
A graph $E$ is cofinal if and only if the relation $\sim$ has only one equivalence class.
\label{corollary_cofinality_via_sim}
\end{corollary}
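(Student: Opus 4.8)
The claim is that $E$ is cofinal if and only if the relation $\sim$ on $E^{\leq\infty}$ has a single equivalence class. I would prove the two directions separately, using Proposition \ref{proposition_existence_of_the_four_types} as the essential tool for the backward direction. Throughout I use the defining property that $E$ is cofinal exactly when every vertex $v$ satisfies: for each $p\in E^{\leq\infty}$ there is $w\in p^0$ with $v\geq w$; equivalently, $p^0\cap T(v)\neq\emptyset$ for all $v\in E^0$ and all $p\in E^{\leq\infty}$.

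\emph{Forward direction.} Assume $E$ is cofinal and let $p,q\in E^{\leq\infty}$. To show $p\sim q$, i.e. $R(p^0)=R(q^0)$, it suffices by symmetry to show $R(p^0)\subseteq R(q^0)$. Let $u\in R(p^0)$, so $u\geq v$ for some $v\in p^0$. By cofinality applied to the vertex $v$ and the path $q$, there is $w\in q^0$ with $v\geq w$. Then $u\geq v\geq w$ gives $u\in R(q^0)$. Hence $R(p^0)\subseteq R(q^0)$, and by symmetry equality holds, so $p\sim q$. Thus $\sim$ has only one equivalence class. (One should also observe $E^{\leq\infty}\neq\emptyset$ unless $E^0=\emptyset$, in which case the statement is vacuous on both sides; every vertex either is a sink, hence terminates a path of $E^{\leq\infty}$, or emits an edge and one extends.)

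\emph{Backward direction.} Assume $\sim$ has a single equivalence class; we must show every vertex is cofinal. Fix $v\in E^0$ and $p\in E^{\leq\infty}$; we want $w\in p^0$ with $v\geq w$. By Proposition \ref{proposition_existence_of_the_four_types}, starting from $v$ we obtain some $q\in E^{\leq\infty}$ with $v\in q^0$: indeed, if $v$ connects to a sink we take the path from $v$ to that sink (an element of $E^{\leq\infty}$), if $v$ connects to a cycle without exits or an extreme cycle we take the path from $v$ followed by traversing that cycle infinitely (an infinite path through $v$), and otherwise $v$ itself lies on an infinite descending path in $E^{\leq\infty}$; in all cases $v\in R(q^0)$. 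Since $p\sim q$, $R(p^0)=R(q^0)$, so $v\in R(p^0)$, which by definition of $R$ means $v\geq w$ for some $w\in p^0$. This is exactly cofinality of $v$; as $v$ was arbitrary, $E$ is cofinal.

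\textbf{Main obstacle.} The only subtle point is the backward direction's use of Proposition \ref{proposition_existence_of_the_four_types} to manufacture, from an arbitrary vertex $v$, a genuine element of $E^{\leq\infty}$ whose vertex set contains $v$ (equivalently, an element $q$ with $v\in R(q^0)$). The proposition gives the four mutually exhaustive cases, but one must be a little careful to package each as a legitimate member of $E^{\leq\infty}$: the sink case yields a finite path ending in a sink, the two cycle cases and the infinite-descending-path case yield infinite paths. Once that membership is secured, the rest is a one-line application of the hypothesis $R(p^0)=R(q^0)$ together with transitivity of $\geq$. The forward direction is entirely formal.
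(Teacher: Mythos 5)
Your proof is correct and follows essentially the same route as the paper: the forward direction is the same formal argument that cofinality of each vertex of $p^0$ gives $p^0\subseteq R(q^0)$ and hence $R(p^0)\subseteq R(q^0)$, and the backward direction uses Proposition \ref{proposition_existence_of_the_four_types} to produce $q\in E^{\leq\infty}$ with $v\in q^0$ and then invokes $R(p^0)=R(q^0)$. Your extra care in packaging each of the four cases as a genuine element of $E^{\leq\infty}$ is a harmless elaboration of what the paper leaves implicit.
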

\begin{proof}
If $E$ is cofinal and $p,q\in E^{\leq\infty},$ then $p^0\subseteq R(q^0)$ by the cofinality of every vertex of $p^0,$ so $R(p^0)\subseteq R(q^0).$ Symmetrically, $R(q^0)\subseteq R(p^0).$ 

To show the converse, let $v\in E^0$ and $p\in E^{\leq\infty}.$ By Proposition \ref{proposition_existence_of_the_four_types}, there is an element $q$ of $E^{\leq\infty}$ such that $v\in q^0.$ Since $R(p^0)=R(q^0),$ $v\in R(p^0)$ which shows that $v$ is cofinal. 
\end{proof}

\subsection{Terminal paths}
The following definition leads us to the ``fourth primary color''. 

\begin{definition}
An infinite path $\alpha$ of a graph $E$ is {\em terminal} if no element of $T(\alpha^0)$ is an infinite emitter or on a cycle and if every infinite path $\beta$ with $\so(\beta)\in\alpha^0$ is such that $T(\beta^0)\subseteq R(\beta^0)$.
\label{definition_terminal_path} 
\end{definition}

If $\alpha$ is a terminal path, then $T(\alpha^0)\subseteq R(\alpha^0)$ holds. This implies that $T(\alpha^0)$ contains no sinks. 

Any infinite path in each of the two graphs below is terminal. Note that no vertex of the first graphs has a bifurcation. However, in the second graph, every vertex has a bifurcation.  
\[\xymatrix{\bullet\ar[r] &\bullet\ar[r] &
\bullet\ar[r] & \bullet\ar@{.}[r]&}\hskip2cm\xymatrix{\bullet\ar@/_1pc/ [r] \ar@/^1pc/ [r] &\bullet\ar @/_1pc/ [r] \ar@/^1pc/ [r]&
\bullet\ar @/_1pc/ [r] \ar@/^1pc/ [r]& \bullet\ar@{.}[r]&}\]

An infinite path containing infinitely many vertices does not have to be terminal. Indeed, no infinite path is terminal in any of the three graphs below. 
\[\xymatrix{\\ \bullet\ar@(ur,ul)\ar[r]& \bullet\ar@(ur,ul)\ar[r]&\bullet\ar@(ur,ul)\ar[r]&\bullet\ar@(ur,ul)\ar@{.}[r]&\\
\\
\bullet\ar@{.} @/_1pc/ [r] _{\mbox{ } } \ar@/_/ [r] \ar [r] \ar@/^/ [r] \ar@/^1pc/ [r] &\bullet\ar@{.} @/_1pc/ [r] _{\mbox{ } } \ar@/_/ [r] \ar [r] \ar@/^/ [r] \ar@/^1pc/ [r]&\bullet \ar@{.} @/_1pc/ [r] _{\mbox{ } } \ar@/_/ [r] \ar [r] \ar@/^/ [r] \ar@/^1pc/ [r]&\bullet\ar@{.}[r]&}
\hskip2cm
\xymatrix{&&&\\
\bullet\ar[r]\ar@{.>}[u] & \bullet\ar[r]\ar@{.>}[u]  & \bullet\ar@{.>}[r]\ar@{.>}[u] &\\
\bullet\ar[r] \ar[u]  & \bullet\ar[r] \ar[u] & \bullet \ar@{.>}[r] \ar[u] &\\
\bullet\ar[r] \ar[u]  & \bullet\ar[r] \ar[u] & \bullet \ar@{.>}[r] \ar[u] &}
\]

Lemma \ref{lemma_on_intersecting_terminal_paths} shows some properties of terminal paths.

\begin{lemma} Let $E$ be any graph and let 
$\alpha$ be a terminal path of $E.$ 
\begin{enumerate}[\upshape(1)]
\item Every infinite path $\beta$ originating at a vertex of $\alpha$ is terminal and $\alpha\sim\beta.$

\item If $\beta$ is an infinite path which contains a vertex $v$ of $\alpha$, then the suffix $\gamma$ of $\beta$ starting at $v$ is terminal and $\alpha\sim\beta\sim\gamma.$
\end{enumerate}
\label{lemma_on_intersecting_terminal_paths}
\end{lemma}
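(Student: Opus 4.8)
The plan is to prove (1) first and then derive (2) from (1) together with a short argument that truncating $\beta$ at $v$ does not change the relevant tree and root sets. For (1), let $\beta$ be an infinite path with $\so(\beta)\in\alpha^0$; write $v=\so(\beta)$. I first want to show $\beta\sim\alpha$, i.e.\ $R(\beta^0)=R(\alpha^0)$. Since $v\in\alpha^0$ and $v\leq\ra(\beta_n)$ for every initial segment, every vertex of $\beta^0$ lies in $T(\alpha^0)$; but the terminality of $\alpha$ gives $T(\alpha^0)\subseteq R(\alpha^0)$, so $\beta^0\subseteq R(\alpha^0)$ and hence $R(\beta^0)\subseteq R(\alpha^0)$. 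For the reverse inclusion, note $v\in\beta^0$ and $v\in\alpha^0$, and since $\alpha$ has no infinite emitters or cycles on its tree and $T(\alpha^0)\subseteq R(\alpha^0)$, every vertex of $\alpha^0$ connects into $R(v)\subseteq R(\beta^0)$; more carefully, for any $u\in\alpha^0$, $\beta$ restricted from $v$ onward is an infinite path from a vertex of $\alpha^0$, so by the condition in Definition~\ref{definition_terminal_path} applied with this path we get $\alpha^0\subseteq T(\beta^0)\subseteq R(\beta^0)$ — wait, I need $\alpha^0\subseteq R(\beta^0)$, which follows because $v\geq$ (some vertex of) $\alpha$ is the wrong direction. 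The cleaner route: $v\in\alpha^0$ means there is a finite path inside $\alpha$ from $\so(\alpha)$ (or any earlier vertex of $\alpha$) to $v$, and from $v$ one continues along $\alpha$; so in fact $R(\alpha^0)=R(v^+)$ where $v^+$ is the suffix of $\alpha$ at $v$, and likewise the vertices of $\alpha$ before $v$ all lie in $R(v)\subseteq R(\beta^0)$, while the vertices of $\alpha$ from $v$ onward lie in $T(v)\subseteq T(\beta^0)\subseteq R(\beta^0)$ using $T(\alpha^0)\subseteq R(\alpha^0)$ applied after transferring to $\beta$. I will organize this so that the equality $R(\alpha^0)=R(\beta^0)$ falls out, giving $\alpha\sim\beta$.

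Next, still within (1), I must verify that $\beta$ itself satisfies Definition~\ref{definition_terminal_path}: no vertex of $T(\beta^0)$ is an infinite emitter or on a cycle, and every infinite path $\delta$ with $\so(\delta)\in\beta^0$ has $T(\delta^0)\subseteq R(\delta^0)$. For the first part, $T(\beta^0)\subseteq T(\alpha^0)$ because $\beta^0\subseteq T(\alpha^0)$ (shown above) and $T$ is monotone and idempotent on hereditary sets; since $T(\alpha^0)$ contains no infinite emitters and no vertices on cycles by hypothesis, neither does $T(\beta^0)$. For the second part, if $\so(\delta)\in\beta^0\subseteq T(\alpha^0)$, then $\so(\delta)\in R(\alpha^0)$ as well (terminality of $\alpha$), but more to the point $\so(\delta)$ is reachable from a vertex of $\alpha^0$, and concatenating we get an infinite path $\delta'$ with $\so(\delta')\in\alpha^0$ and $\delta$ as a suffix — actually the condition in the definition is stated for paths \emph{originating} at $\alpha^0$, so I take a finite path $r$ inside $\alpha$ or $\beta$ ending at $\so(\delta)$ with source in $\alpha^0$ and form $r\delta$; then $T((r\delta)^0)\subseteq R((r\delta)^0)$ by terminality of $\alpha$, and since $T(\delta^0)\subseteq T((r\delta)^0)$ and $R((r\delta)^0)=R(r^0)\cup R(\delta^0)$ with the finitely many vertices of $r$ already in $R(\delta^0)$ (they connect to $\so(\delta)\in\delta^0$), we conclude $T(\delta^0)\subseteq R(\delta^0)$. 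This establishes that $\beta$ is terminal.

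For (2), suppose $\beta$ is an infinite path containing a vertex $v$ of $\alpha$, and let $\gamma$ be the suffix of $\beta$ from $v$. Then $\gamma$ is an infinite path with $\so(\gamma)=v\in\alpha^0$, so by part (1), $\gamma$ is terminal and $\gamma\sim\alpha$. It remains to see $\beta\sim\gamma$, i.e.\ $R(\beta^0)=R(\gamma^0)$: the inclusion $\gamma^0\subseteq\beta^0$ gives $R(\gamma^0)\subseteq R(\beta^0)$, and for the reverse, $\beta^0=(\text{finitely or infinitely many vertices before }v)\cup\gamma^0$, where every vertex $u$ before $v$ on $\beta$ satisfies $u\geq v\in\gamma^0$, hence $u\in R(\gamma^0)$; thus $\beta^0\subseteq R(\gamma^0)$ and $R(\beta^0)\subseteq R(\gamma^0)$. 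Combining, $\alpha\sim\beta\sim\gamma$, as claimed.

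\textbf{Anticipated main obstacle.} The only genuinely delicate point is bookkeeping the direction of the reachability relation when showing $R(\alpha^0)\subseteq R(\beta^0)$ in part (1): one must use that $v$ sits \emph{on} $\alpha$ so that the portion of $\alpha$ before $v$ reaches $v$ (hence reaches $\beta^0$) while the portion from $v$ onward lies in $T(v)\subseteq T(\alpha^0)\subseteq R(\alpha^0)$, and then transfer this through the ``every infinite path from $\alpha^0$'' clause applied to $\beta$ itself. Everything else is monotonicity and idempotency of $T$ and $R$ on the relevant sets, plus the trivial observation that prepending a finite path does not enlarge the root set modulo finitely many vertices that already reach into it.
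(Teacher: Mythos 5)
Your proposal is correct and follows essentially the same route as the paper's proof: the same case split on whether a vertex of $\alpha$ lies before or after $\so(\beta)$ to get $R(\alpha^0)\subseteq R(\beta^0)$, the same concatenation trick $r\delta$ (the paper's $p\gamma$) to verify the terminality clause for $\beta$, and the same derivation of (2) from (1). The only difference is cosmetic ordering (you establish $\alpha\sim\beta$ before terminality of $\beta$, justifying $T(\beta^0)\subseteq R(\beta^0)$ directly from the definition of terminality of $\alpha$ applied to $\beta$), and the self-corrections in your write-up should be cleaned out of a final version.
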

\begin{proof}
To show (1), let $\beta$ be an infinite path with $\so(\beta)\in \alpha^0.$ As $\alpha$ is terminal and $T(\beta^0)\subseteq T(\alpha^0),$ no element of $T(\beta^0)$ is an infinite emitter or on a cycle. If $\gamma$ is an infinite path originating at a vertex of $\beta$ and $p$ the part of $\beta$ from $\so(\beta)$ to $\so(\gamma),$ then $T(\gamma^0)\subseteq T((p\gamma)^0)\subseteq R((p\gamma)^0)=R(\gamma^0)$ where the second inclusion holds because $\alpha$ is terminal and the last equality holds since $\gamma^0\subseteq (p\gamma)^0$ and $(p\gamma)^0\subseteq R(\gamma^0)$. Hence, $\beta$ is terminal. If $v\in R(\alpha^0),$ let $u\in \alpha^0$ be such that $v\in R(u).$ As both $u$ and $\so(\beta)$ are on $\alpha,$ $u\geq \so(\beta)$ or $\so(\beta)\geq u.$ If $u\geq \so(\beta),$ then $u\in R(\beta^0),$ so $v\in R(\beta^0).$ If $\so(\beta)\geq u,$ then  $u\in T(\beta^0).$ As $\beta$ is terminal, $u\in T(\beta^0)\subseteq R(\beta^0).$ Thus, $v\in R(u)\subseteq R(\beta^0).$ This shows that $R(\alpha^0)\subseteq R(\beta^0).$ For the converse, let $v\in R(\beta^0)$ and let $u\in \beta^0$ be such that $v\in R(u).$ As $u\in T(\alpha^0)\subseteq R(\alpha^0),$ $v\in R(\alpha^0).$ This shows that $R(\alpha^0)=R(\beta^0)$ and so $\alpha\sim\beta.$

To show (2), assume that $v$ and $\gamma$ are as in the assumption of part (2). By part (1), $\gamma$ is terminal and $\alpha\sim\gamma$. As $\beta^0\subseteq R(\gamma^0)$ and $\gamma^0\subseteq \beta^0,$ $R(\beta^0)=R(\gamma^0).$ So, $\beta\sim\gamma.$ 
\end{proof}

\subsection{The four-color characterization of graded simple Leavitt path algebras}
\label{subsection_graded_simple}
Next, we formally introduce the notion of a ``cluster'' of vertices, mentioned in the introduction.  

\begin{definition}
A vertex $v$ of a graph $E$ is {\em terminal} if it is sink, on a cycle without exits, on an extreme cycle, or on a terminal path.  

Let $T_E$ be the set of terminal vertices. If $T_E\neq\emptyset,$ we define an equivalence relation on $T_E$ by 
\begin{center}
$v\approx w\;\;$ if $\;v\in p^0$ and $w\in q^0$ for $p,q\in E^{\leq\infty}$ such that $p\sim q.$     
\end{center}

The {\em cluster} of a terminal vertex $v$ is the equivalence class $\{w\in T_E\mid v\approx w\}.$  
\label{definition_clusters}
\end{definition}
It is direct to check that $\approx$ is reflexive, symmetric and transitive for vertices which are not on terminal paths. By Lemma \ref{lemma_on_intersecting_terminal_paths}, $\approx$ is transitive for vertices on terminal paths also. 

Let us consider some examples of clusters. 
For the first two graphs below, every vertex is terminal and each graph has only one cluster. The sink and the vertex on the cycle of the third graph are terminal and each is in its own one-element cluster.
\[\xymatrix{{\bullet} \ar@/^1pc/ [r] \ar@(lu,ld) & {\bullet} \ar@/^1pc/[l]  \ar@/^1pc/ [r] & {\bullet}
\ar@/^1pc/ [l] \ar@(ur,dr) }\hskip2cm \xymatrix{\bullet\ar@/_1pc/ [r] \ar@/^1pc/ [r] &\bullet\ar @/_1pc/ [r] \ar@/^1pc/ [r]&
\bullet\ar @/_1pc/ [r] \ar@/^1pc/ [r]& \bullet\ar@{.}[r]&}\hskip2cm \xymatrix{{\bullet} & {\bullet} \ar[l]  \ar[r] & {\bullet} \ar@(ur,dr) }\] 

Lemma \ref{lemma_terminal_cluster} describes the cluster of any terminal vertex of a graph. By part (3) of Lemma \ref{lemma_terminal_cluster}, 
if the relation $\approx$ is considered only on the terminal vertices which are on extreme cycles, then it coincides with the relation from \cite[Definition 3.7.1]{LPA_book}.

\begin{lemma}
Let $v$ be a terminal vertex of a graph $E$ and let $C$ be its cluster.  One of the following four conditions holds.  
\begin{enumerate}[\upshape(1)]
\item The vertex $v$ is a sink. The element $v\in E^{\leq\infty}$ contains $v,$ it is a unique such element of $E^{\leq\infty}$ up to $\sim,$ and $C=\{v\}=T(v)=T(C).$  So, $\ol C=\ol{\{v\}}.$

\item The vertex $v$ is on a cycle $c$ without exits. The element $ccc\ldots\in E^{\leq\infty}$ contains $v,$ it is a unique such element of $E^{\leq\infty}$ up to $\sim$, and $C=c^0=T(c^0)=T(C).$ So, $\ol C=\ol{\{u\}}$ for any $u\in c^0.$

\item The vertex $v$ is on an extreme cycle $c.$ The element $ccc\ldots\in E^{\leq\infty}$ contains $v,$ it is a unique such element of $E^{\leq\infty}$ up to $\sim,$ and $C=T(c^0)=T(C).$ So, $\ol C=\ol{\{u\}}$ for any $u\in T(c^0).$  

\item The vertex $v$ is on a terminal path $\alpha.$ The element $\alpha\in E^{\leq\infty}$ contains $v,$ it is a  unique such element of $E^{\leq\infty}$ up to $\sim,$ $C=T(C)=\bigcup T(\beta^0)$ where the union is taken over terminal paths $\beta$ such that $\alpha\sim\beta,$ and $\ol C= \ol{\alpha^0}=\ol{\{u\}}$ for any $u\in C.$  
\end{enumerate}
\label{lemma_terminal_cluster} 
\end{lemma}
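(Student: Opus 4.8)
The four conditions in the statement are exactly the four ways in which $v$ can be terminal (Definition \ref{definition_clusters}), and they are mutually exclusive: a sink emits no edge, an extreme cycle has an exit while a cycle without exits does not, and a vertex of a terminal path lies on no cycle and is no sink. So the plan is to treat the four cases one at a time, and in each case to carry out the same four steps: (i) note that the displayed element $\pi$ of $E^{\leq\infty}$ (namely $v$, $ccc\ldots$, $ccc\ldots$, $\alpha$ respectively) lies in $E^{\leq\infty}$ and contains $v$; (ii) show that every $q\in E^{\leq\infty}$ with $v\in q^0$ satisfies $q\sim\pi$; (iii) identify $C$; (iv) deduce the claims about $T(C)$ and about the saturated closures. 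Step (i) is immediate in all four cases.

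For (ii) the arguments are short. If $v$ is a sink, any such $q$ must terminate at $v$, so every vertex of $q^0$ reaches $v$ and $R(q^0)=R(v)$. If $v$ lies on a cycle $c$ without exits, then $v$ is regular and any path leaving $v$ is forced around $c$, so $q$ is infinite with $c^0\subseteq q^0$, its earlier vertices lying in $R(v)=R(c^0)$; hence $R(q^0)=R(c^0)$. If $v$ lies on an extreme cycle $c$, then $R(c^0)=R(v)$, and each $u\in q^0$ precedes or follows $v$ on $q$, so $u\in R(v)=R(c^0)$ or $u\in T(v)\subseteq T(c^0)\subseteq R(c^0)$ (extremality); hence $R(q^0)=R(c^0)$. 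If $v$ lies on a terminal path $\alpha$, then $q$ is infinite because $T(\alpha^0)$ contains no sink and no infinite emitter, the suffix $\gamma$ of $q$ at $v$ is terminal with $\gamma\sim\alpha$ by Lemma \ref{lemma_on_intersecting_terminal_paths}(1), and the prefix of $q$ lies in $R(v)\subseteq R(\gamma^0)$, so $R(q^0)=R(\gamma^0)=R(\alpha^0)$.

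For (iii): by (ii) the $\sim$-class of $v$ is pinned down, so $w\approx v$ holds precisely when some $q\in E^{\leq\infty}$ containing $w$ is $\sim\pi$, and applying (ii) to $w$ forces the canonical element of $w$'s type to be $\sim\pi$ as well. A short case check on that type then eliminates all mismatches: a sink or a cycle-without-exits vertex cannot be $\sim$-related to an extreme cycle or a terminal path, since this would put a sink, or a vertex lying on a cycle, into the tree of a terminal path or into $c^0$; and a cycle vertex that both reaches and is reached from an extreme cycle $c$ lies on an extreme cycle $c'$ with $R((c')^0)=R(c^0)$, by the standard ``vertex between two cycle vertices'' argument, whence it lies in $T(c^0)$, and conversely every vertex of $T(c^0)$ is of this type. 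This yields $C=\{v\}$, $C=c^0$, $C=T(c^0)$, and $C=\bigcup_\beta T(\beta^0)$ (union over terminal paths $\beta\sim\alpha$) in the four cases. In the last case one must additionally see that every $u\in T(\beta^0)$, $\beta$ terminal, lies on a terminal path $\sim\beta$: since $T(\beta^0)\subseteq R(\beta^0)$, join $u$ by a path to a vertex $y$ of $\beta$ and append the suffix of $\beta$ at $y$; the resulting infinite path $\delta$ has $\delta^0\subseteq T(\beta^0)$, so no vertex of $T(\delta^0)$ is an infinite emitter or on a cycle, $\delta$ is well-rooted (any infinite path based in $T(\beta^0)$ is, when $\beta$ is terminal — a direct consequence of Definition \ref{definition_terminal_path}), and $\delta\sim\beta$. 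Since $T(c^0)=c^0$ for $c$ without exits and $T(T(\cdot))=T(\cdot)$, one gets $C=T(C)$ in each case.

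For (iv): in the first three cases $T(u)=T(C)$ for every $u\in C$ (obvious for a sink; $T(u)=c^0$ for $c$ without exits; $T(u)=T(c^0)$ for an extreme cycle, by going around $c$), so $\overline{\{u\}}$, being the saturated closure of $T(u)$, equals $\overline C$. The terminal-path case is the crux, and it reduces to the claim that $\overline{\{u\}}=\overline{\alpha^0}$ for every vertex $u$ of a terminal path $\alpha$. I would prove this by applying Lemma \ref{lemma_saturated_closure} with $V=\{u\}$ and $H=\overline{\alpha^0}$: one verifies $\overline{\{u\}}\subseteq H\subseteq R(T(u))$, and then condition (2), that $\overline{\alpha^0}-T(u)$ has no infinite emitters (its elements lie in $T(\alpha^0)$, which has none, or are regular vertices adjoined while forming the saturated closure) and that every infinite path $\epsilon$ with $\epsilon^0\subseteq\overline{\alpha^0}$ meets $T(u)$. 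For the latter, $\epsilon$ must meet $T(\alpha^0)$ — otherwise the ``saturation levels'' of $\epsilon_0,\epsilon_1,\ldots$ would be an infinite strictly decreasing sequence of positive integers — and if it met $T(\alpha^0)$ only at vertices reached from $\alpha$-vertices strictly preceding $u$, then prepending a connecting path would produce an infinite path based in $\alpha^0$ whose tree contains $u$ yet whose root misses $\epsilon^0$, contradicting terminality of $\alpha$. Granting this claim, $\overline{\beta^0}=\overline{\alpha^0}$ for any terminal $\beta\sim\alpha$ (each sits in the other via $\beta_0\in R(\alpha^0)$ and symmetry), and for arbitrary $u\in C$, writing $u\in T(\beta^0)$ with $\beta\sim\alpha$, the inequalities $u\geq\alpha_i$ and $\beta_j\geq u$ give $\overline{\alpha^0}=\overline{\{\alpha_i\}}\subseteq\overline{\{u\}}\subseteq\overline{\{\beta_j\}}=\overline{\beta^0}=\overline{\alpha^0}$; finally $C\subseteq\overline{\alpha^0}$ together with $\overline{\alpha^0}$ being hereditary and saturated gives $\overline C=\overline{\alpha^0}$. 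The main obstacle is precisely this use of Lemma \ref{lemma_saturated_closure}, i.e. showing that an infinite path contained in $\overline{\alpha^0}$ must pass through $T(\{u\})$.
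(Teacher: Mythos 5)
Your proof is correct and follows the same overall route as the paper: the same four-way case split, the same use of Lemma \ref{lemma_on_intersecting_terminal_paths} to reduce an arbitrary $q\in E^{\leq\infty}$ through $v$ to the canonical representative, and the same identification of $C$ with $\{v\}$, $c^0$, $T(c^0)$, and $\bigcup T(\beta^0)$ respectively. The only genuine divergence is in the closure identity of case (4). The paper applies Lemma \ref{lemma_saturated_closure} with $V=\alpha^0$ and $H=\ol C$, where the infinite-path condition only requires meeting $T(\alpha^0)$ (which follows at once from $\beta\sim\alpha$ and $\so(\alpha)\in R(\beta^0)$), and then obtains $\ol C=\ol{\{u\}}$ for $u\in C$ cheaply by rerunning the argument for the terminal path $\gamma$ issuing from $u$ and using $T(\gamma^0)\subseteq T(u)$. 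You instead apply the lemma with $V=\{u\}$ and $H=\ol{\alpha^0}$, which forces you to prove the sharper statement that every infinite path in $\ol{\alpha^0}$ meets $T(u)$ for a fixed $u\in\alpha^0$; your argument for this (descend through the saturation levels to reach $T(\alpha^0)$, then prepend a connecting path and invoke $T((p\epsilon')^0)\subseteq R((p\epsilon')^0)$ from terminality of $\alpha$) is valid, and your sandwich $\ol{\alpha^0}=\ol{\{\alpha_i\}}\subseteq\ol{\{u\}}\subseteq\ol{\{\beta_j\}}=\ol{\beta^0}=\ol{\alpha^0}$ then extends it to all of $C$. So you pay a little more in that one step than the paper does, but nothing is missing; the only cosmetic slips are the citation of part (1) rather than part (2) of Lemma \ref{lemma_on_intersecting_terminal_paths} when relating $q$ to its suffix at $v$, and the phrase ``into $c^0$'' where ``into $T(c^0)$'' is what the exclusion argument actually produces.
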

\begin{proof}
If $v$ is a sink and if $v\in p^0$ for some $p\in E^{\leq\infty},$ then $\ra(p)=v$ and $R(v)=R(p^0),$ so $v\sim p.$ Thus, if $w\approx v,$ then $v=w$, so $C=\{v\}.$ As 
$T(v)=\{v\},$ $T(C)=T(\{v\})=\{v\}=C$ and $\ol C=\ol{\{v\}}.$  
 
If $v$ is a vertex of a cycle $c$ without exits and if $v\in p^0$ for some $p\in E^{\leq\infty},$ then the only terminal vertices of $p$ are the vertices in $c^0$ and $R(c^0)=R(p).$ So, $ccc\ldots \sim p.$ If $w\approx v,$ then $w\in c^0,$ so $C=c^0.$ As $T(c^0)=c^0,$ $T(C)=T(c^0)=c^0=C$ and $\ol C=\ol{c^0}=\ol{\{u\}}$ for any $u\in c^0.$ 

If $v$ is a vertex of an extreme cycle $c,$ then all vertices in $T(v)$ are on extreme cycles which have the same root as $c.$ Hence, $T(c^0)\subseteq C.$ If $v\in p^0$ for some $p\in E^{\leq\infty},$ then the only terminal vertices of $p$ are the vertices on extreme cycles with the same root as $c$. Thus, $R(c^0)=R(p^0)$ which implies that $ccc\ldots\sim p.$ If $w\approx v,$ then $v$ and $w$ are on extreme cycles with the same roots. As the vertices of any such cycle are in $T(c^0),$ we have that $C\subseteq T(c^0).$ We already have the converse so $C=T(c^0).$ Thus, $T(C)=T(c^0)=C$ and $\ol C=\ol{c^0}=\ol{\{u\}}$ for any $u\in T(c^0).$ 
 
If $v$ is a vertex such that $v\in
\alpha^0$ for some terminal path $\alpha,$ then no vertex of $T(v)$ is on a cycle and it is neither a sink nor an infinite emitter. Hence, if $v\in p^0$ for some $p\in E^{\leq\infty},$ then the suffix $\beta$ of $p$ past $v$ is a terminal path such that  $\alpha\sim\beta\sim p$ by part (2) of Lemma \ref{lemma_on_intersecting_terminal_paths}. 

If $w\in T(\beta^0)$ for some terminal path $\beta$ such that $\beta\sim\alpha,$ then
$w\approx v,$ so $w\in C.$ Conversely, if 
$w\approx v,$ then $w$ is on some $q\in E^{\leq\infty}$ such that $q\sim\alpha.$ As $w$ is a terminal vertex, the suffix $\gamma$ of $q$ originating at $w$ is terminal and $\gamma\sim q\sim\alpha$ by part (2) of Lemma \ref{lemma_on_intersecting_terminal_paths}.
Hence, $w\in \bigcup T(\beta^0)$ where the union is taken over terminal paths $\beta$ such that $\beta\sim \alpha.$ If $U$ denotes this union, this shows that $C=U.$ As $U$ is hereditary, we have that $T(C)=T(U)=U=C.$

Next, we show that $\ol C=\ol{\alpha^0}.$
As $T(\alpha^0)\subseteq C,$ $\ol{\alpha^0}\subseteq \ol C.$ 
If $w\in \ol C$ is arbitrary, any infinite path originating at $w$ has a terminal suffix $\beta$ such that $\alpha\sim\beta$ by the previous paragraph and Lemma \ref{lemma_saturated_closure}. Since $\so(\alpha)\in R(\beta^0),$ $\beta$ contains a vertex in $T(\alpha^0).$ This shows that any infinite path in $\ol C$ contains a vertex of $T(\alpha^0).$ Since $w\in R(\beta^0)=R(\alpha^0)\subseteq R(T(\alpha^0)),$ we have that $\ol C\subseteq R(T(\alpha^0)).$ So, 
$T(\alpha^0)\subseteq C\subseteq \ol C\subseteq R(T(\alpha^0)).$ 
As $\ol C$ contains no infinite emitters, we can use Lemma \ref{lemma_saturated_closure} to conclude that $\ol C=\ol{\alpha^0}.$ 
If $u\in C,$ then $u$ is the source of a terminal path $\gamma$ such that $\gamma\sim \alpha$ and the same argument applies to $\gamma$ instead of $\alpha$ to show that $\ol C=\ol{\gamma^0}.$ The relation $T(\gamma^0)\subseteq T(u)$ implies that $\ol C=\ol{\gamma^0}=\ol{T(\gamma^0)}\subseteq \ol{T(u)}=\ol{\{u\}}.$   
\end{proof}

By \cite[Theorem 5.7]{Tomforde}, $L_K(E)$ is graded simple if and only if $E$ is cofinal. In Theorem \ref{theorem_graded_simple}, we characterize graded simplicity of $L_K(E)$ with the properties of $E$ presented in terms of the four primary colors.

\begin{theorem}
Let $E$ be a graph and $K$ be a field. The following conditions are equivalent. 
\begin{enumerate}[\upshape(1)]
\item $L_K(E)$ is graded simple (equivalently, $E$ is cofinal).  

\item The set of terminal vertices is nonempty and it consists of a single cluster $C$ such that $E^0$ is the (hereditary and) saturated closure of $C.$ 

\item Exactly one of the following holds. 
\begin{enumerate}[\upshape(a)]
\item The set $E^0$ is the (hereditary and) saturated closure of a sink. In this case, $E$ is row-finite and acyclic and $E^0=R(v)$ for a sink $v$.    

\item The set $E^0$ is the (hereditary and) saturated closure of $c^0$ for a cycle $c$ without exits. In this case, $E$ is row-finite, $E^0=R(c^0),$ and $c$ is the only cycle in $E$.    

\item  The set $E^0$ is the hereditary and saturated closure of $c^0$ for an extreme cycle $c.$ In this case, every cycle of $E$ is extreme, every infinite emitter is on a cycle, and $E^0=R(c^0)$.

\item The set $E^0$ is the hereditary and saturated closure of $\alpha^0$ for a terminal path $\alpha.$ In this case, $E$ is acyclic and row-finite and $E^0=R(\alpha^0)$.
\end{enumerate}  
\end{enumerate}
\label{theorem_graded_simple}
\end{theorem}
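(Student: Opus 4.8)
The plan is to prove $(1)\Leftrightarrow(3)$ and $(1)\Leftrightarrow(2)$, using throughout the reformulation of cofinality from Corollary \ref{corollary_cofinality_via_sim}: $E$ is cofinal exactly when $\sim$ has a single class on $E^{\leq\infty}$, and then $R(p^0)=E^0$ for every $p\in E^{\leq\infty}$. Two elementary facts are used repeatedly: (i) a closed path of positive length based at a vertex $u$ contains a cycle through $u$ (take a shortest one and use that a cycle has distinct sources); (ii) if $E^0=\ol X$ for some $X\subseteq E^0$, then every vertex of $E^0$ reaches $T(X)$ (induct on the $\Lambda_n$ description of $\ol X$), so Lemma \ref{lemma_saturated_closure} applies with $H=E^0$ and $V=X$; since cofinality gives $R(p^0)=E^0$, the clause ``every infinite path contains a vertex of $T(X)$'' in Lemma \ref{lemma_saturated_closure}(2) is automatic, and the only thing left to check case by case is the absence of infinite emitters in $E^0-T(X)$.

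For $(1)\Rightarrow(3)$, assume $E$ cofinal and split into four mutually exclusive cases: $E$ has a sink; $E$ has a cycle without exits; $E$ has a cycle but every cycle has an exit; $E$ has neither a sink nor a cycle. If $E$ has a sink $v$, cofinality applied to $v$ forces $E^0=R(v)$, and applied to $ccc\ldots$, to a single infinite emitter, or to any infinite path it forbids each of these, so $E$ is acyclic, row-finite, has no infinite path, and $v$ is its unique sink. If $E$ has a cycle $c$ without exits, cofinality forces every vertex to reach $c^0$, and (using fact (i) and a short propagation argument along $c$) $c$ is the only cycle and $E$ is row-finite. If every cycle has an exit, cofinality applied to $c'c'\ldots$ shows $T(c'^0)\subseteq R(c'^0)$ for each cycle $c'$, so every cycle is extreme, and fact (i) shows every infinite emitter lies on a cycle. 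If $E$ has no sink and no cycle, fact (i) shows $E$ has no infinite emitters, so $E$ is acyclic and row-finite with no sinks, and every infinite path $\alpha$ is terminal because $T(\beta^0)\subseteq E^0=R(\beta^0)$ for every infinite path $\beta$. In each case take $X$ to be $\{v\}$, $c^0$, $c^0$, $\alpha^0$ respectively; the case analysis just completed supplies the missing hypothesis of Lemma \ref{lemma_saturated_closure}(2), so $E^0=\ol X$, giving the corresponding item (a)--(d) with its stated consequences. The four items are pairwise incompatible since (a) and (d) force acyclicity, (b) forces a unique cycle without exits, and (c) forces all cycles to have exits.

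For $(3)\Rightarrow(1)$, suppose one of (a)--(d) holds, with $X\in\{\{v\},c^0,c^0,\alpha^0\}$ accordingly, so $E^0=\ol X$. I would show $R(p^0)=R(X)$ for every $p\in E^{\leq\infty}$, whence $E$ is cofinal by Corollary \ref{corollary_cofinality_via_sim}. By Lemma \ref{lemma_saturated_closure}(2), $\ol X$ has no sink except possibly $v$ in case (a), no infinite emitter outside $T(X)$, and every infinite path meets $T(X)$; hence every $p\in E^{\leq\infty}$ contains a vertex $w\in T(X)$. In cases (a)--(c), $w$ and $X$ are mutually reachable — trivially, or around the cycle, using $T(c^0)\subseteq R(c^0)$ for the extreme cycle — so $R(w)=R(X)$, and since each vertex of $p^0\subseteq\ol X$ reaches $T(X)\subseteq R(X)$ we get $R(p^0)=R(X)$. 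In case (d), Lemma \ref{lemma_on_intersecting_terminal_paths} gives $p\sim\alpha$ directly: a path from $\alpha^0$ to $w$ followed by the suffix of $p$ at $w$ is an infinite path starting on $\alpha$, hence terminal and $\sim\alpha$ by part (1), and its suffix at $w$ (which is the suffix of $p$ at $w$, and $\sim p$) is $\sim\alpha$ by part (2); so $R(p^0)=R(\alpha^0)$. For $(1)\Rightarrow(2)$ I would use the structure obtained in $(1)\Rightarrow(3)$: in each case the terminal vertices are exactly those of the distinguished structure — there are no strays, e.g.\ a terminal path inside $\ol{c^0}$ would have to meet $c^0$, contradicting that its tree contains no cycle vertex — so by Lemma \ref{lemma_terminal_cluster} they form a single cluster $C$ with $\ol C=\ol X=E^0$. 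For $(2)\Rightarrow(1)$, pick $v\in C$ and let $p_0\in E^{\leq\infty}$ be the distinguished element through $v$ from Lemma \ref{lemma_terminal_cluster}; then $E^0=\ol C=\ol{p_0^0}$, so by Lemma \ref{lemma_saturated_closure}(2) every $p\in E^{\leq\infty}$ contains a vertex of $T(p_0^0)\subseteq C=T_E$ (a finite $p$ ends in a sink lying in $T_E$, or in an infinite emitter forced into $T(p_0^0)$), and since $C$ is one $\approx$-class the ``unique up to $\sim$'' clauses of Lemma \ref{lemma_terminal_cluster} force all such $p$ to be $\sim$-equivalent.

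The main obstacle is the case analysis in $(1)\Rightarrow(3)$: ruling out stray sinks, cycles, infinite paths, and infinite emitters from cofinality, and in particular the propagation argument showing that a cycle without exits is isolated — which is also what makes cases (b) and (c) mutually exclusive — is the most delicate bookkeeping. Once those structural facts are in place, the remaining implications are a matter of feeding them into Lemmas \ref{lemma_saturated_closure}, \ref{lemma_terminal_cluster}, \ref{lemma_on_intersecting_terminal_paths} and Corollary \ref{corollary_cofinality_via_sim}.
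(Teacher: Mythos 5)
Your proposal is correct; all four implications go through, and the delicate points you flag (ruling out stray sinks, cycles, infinite emitters and non-terminal infinite paths from cofinality, and feeding the resulting structure into Lemma \ref{lemma_saturated_closure}) are exactly the ones that need care and are handled soundly. The route differs from the paper's in organization rather than in machinery. The paper proves the cycle (1) $\Rightarrow$ (2) $\Rightarrow$ (3) $\Rightarrow$ (1): its (1) $\Rightarrow$ (2) first shows that every cycle is extreme or without exits and every infinite emitter lies on a cycle by exhibiting a proper nontrivial hereditary and saturated set whenever this fails, then invokes Proposition \ref{proposition_existence_of_the_four_types} to produce a terminal vertex and lets Lemma \ref{lemma_terminal_cluster} do the work of (2) $\Rightarrow$ (3). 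You instead prove (1) $\Leftrightarrow$ (3) head-on via a four-way case split (sink / cycle without exits / only cycles with exits / neither), applying the cofinality condition directly to distinguished elements of $E^{\leq\infty}$ such as $ccc\ldots$ or a trivial path at an infinite emitter, and then recover (2) from the structure so obtained; your (2) $\Rightarrow$ (1) is also a direct argument through the uniqueness-up-to-$\sim$ clauses of Lemma \ref{lemma_terminal_cluster} rather than a detour through (3). What the paper's route buys is that the hereditary-saturated-set contradictions and Proposition \ref{proposition_existence_of_the_four_types} are reused elsewhere (notably in Lemma \ref{lemma_for_comp_series}), so the argument is more modular; what your route buys is a more self-contained and concretely checkable case analysis in which the exclusivity and exhaustiveness of (3a)--(3d) fall out for free. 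One small presentational caveat: you attribute the absence of stray sinks in $\ol{X}$ to Lemma \ref{lemma_saturated_closure}(2), which does not literally assert it; the fact is true but comes from the $\Lambda_n$ description of the saturated closure (only regular vertices are added), and you should say so explicitly as the paper does implicitly.
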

\begin{proof}
To show (1) $\Rightarrow$ (2), assume that $E$ is cofinal. If $v$ is an infinite emitter and $\ra(\so^{-1}(v))$ is not contained in $R(v)$, then the saturated closure of the hereditary set $T(\ra(\so^{-1}(v)))-R(v)$ is a proper and nontrivial hereditary and saturated set, so this cannot happen. Hence, $\ra(\so^{-1}(v))\subseteq R(v)$ so every infinite emitter is on a cycle. Similarly, if there is a cycle $c$ emitting a path $p$ such that $\ra(p)\notin R(c^0),$ then the saturated closure of $T(\ra(p))$ is a proper and nontrivial hereditary and saturated set. Hence, every cycle of $E$ is either extreme or without exits. As an infinite emitter cannot be on a cycle without exits, every infinite emitter is on an extreme cycle. 

Next, we claim that the set $T_E$ of terminal vertices is nonempty. This is clear if a vertex of $E$ connects to a sink, an extreme or cycle without exits. Otherwise, by Proposition \ref{proposition_existence_of_the_four_types}, every vertex of $E$ is on an infinite path containing infinitely many vertices.
As every cycle is extreme or without exits and every infinite emitter is on a cycle, this condition implies that $E$ is a row-finite and acyclic graph. Thus, if $\alpha$ is an infinite path, $T(\alpha^0)$ contains neither vertices on cycles nor infinite emitters. Hence, to show that $\alpha$ is terminal, it remains to show that $T(\beta^0)\subseteq R(\beta^0)$ for any infinite path $\beta$ with $\so(\beta)\in \alpha^0$. Assume, on the contrary, that there is $v\in T(\beta^0)$ such that $v\notin R(\beta^0)$ for one such $\beta.$ In that case, the saturated closure of $T(v)$ is nontrivial and proper ($\so(\beta)\notin \ol{\{v\}}$ by Lemma \ref{lemma_saturated_closure}). This is a contradiction, so $\alpha$ is terminal. As $\alpha^0\subseteq T_E,$ $T_E$ is nonempty.   

If $v\in T_E,$ then the cluster $C$ of $v$ is the only cluster in $E$ by the  cofinality of $E$. By Lemma \ref{lemma_terminal_cluster}, $T(C)=C,$ so the saturated closure $\ol C$ of $C$ is a nonempty hereditary and saturated set in $E^0.$ By the cofinality of $E,$ $E^0=\ol C.$ 

The implication (2) $\Rightarrow$ (3)
follows directly from Lemma \ref{lemma_terminal_cluster}. In the case that $C$ consists of vertices on terminal paths, $\ol C=\ol{\alpha^0}$ for a terminal path $\alpha$ by Lemma \ref{lemma_terminal_cluster}, so $E^0=\ol C$ implies that $E^0=\ol{\alpha^0}.$ As $T(\alpha^0)\subseteq R(\alpha^0),$  $R(T(\alpha^0))\subseteq R(R(\alpha^0))=R(\alpha^0).$ The converse $R(\alpha^0)\subseteq R(T(\alpha^0))$ trivially holds and so $E^0=\ol{\alpha^0}\subseteq R(T(\alpha^0))=R(\alpha^0).$ Thus, $E^0=R(\alpha^0).$ 

To show that (3) $\Rightarrow$ (1), we assume that (3) is true and show that the relation $\sim$ has only one equivalence class. By Corollary \ref{corollary_cofinality_via_sim}, this implies that $E$ is cofinal.

If (3a) holds, the relation $E^0=\ol{\{v\}}$ and Lemma \ref{lemma_saturated_closure} imply that there are neither other sinks, infinite emitters, cycles, nor infinite paths. Thus, every element of $E^{\leq \infty}$ is a finite path ending at $v.$ For any such path $p,$ $R(p^0)=R(v),$ so $p\sim v.$

If (3b) holds, the relation $E^0=\ol{c^0}$
and Lemma \ref{lemma_saturated_closure} imply that there are neither sinks, infinite emitters, nor cycles other than $c,$ and that any $p\in E^{\leq \infty}$ consists of a finite path reaching a vertex $v$ of $c$ followed by $ccc\ldots$ if $c$ is considered to start at $v$. As $R(p^0)=R(c^0)$ for any such path $p,$ $p\sim ccc\ldots.$  

If (3c) holds, the relation $E^0=\ol{c^0}$ and Lemma \ref{lemma_saturated_closure} imply that  there are no sinks, that every infinite emitter is in $T(c^0),$ and that every cycle is extreme with vertices in $T(c^0).$ Thus, every $p\in E^{\leq \infty}$ is a finite path followed by an infinite suffix with vertices in $T(c^0)$ or a finite path ending in an infinite emitter in $T(c^0).$ As $R(p^0)=R(c^0)$ for any such path $p,$  $p\sim ccc\ldots.$  

If (3d) holds, the relation $E^0=\ol{\alpha^0}$ and Lemma \ref{lemma_saturated_closure} imply
that there are neither sinks, infinite emitters, nor cycles and that any $p\in E^{\leq\infty}$ contains a vertex of $T(\alpha^0).$ Let $q$ be a path from a vertex of $\alpha^0$ to a vertex of $p$ and let $\beta$ be the suffix of $p$ originating at $\ra(q).$ By Lemma \ref{lemma_on_intersecting_terminal_paths}, $q\beta$ is terminal and $\alpha\sim q\beta.$ Thus, $p\sim\beta\sim q\beta\sim\alpha.$  
\end{proof}

The corollary below follows from Theorem \ref{theorem_graded_simple} and the porcupine-quotient construction. We use this corollary in the proofs of Theorem \ref{theorem_two_types} and Corollary \ref{corollary_one_type}.

\begin{corollary}
Let $E$ be any graph. 
\begin{enumerate}[\upshape(1)]
\item If $v$ is a sink or an infinite emitter not on a cycle, then there are admissible pairs $(H,S)$ and $(G,T)$ of $E$ such that $(G,T)/(H,S)$ is cofinal and that $v$ is a sink of $(G,T)/(H,S).$

\item If $c$ is a cycle of $E,$ then there are admissible pairs $(H,S)$ and $(G,T)$ of $E$ such that $(G,T)/(H,S)$ is cofinal and that $c$ is a cycle of $(G,T)/(H,S)$ which is extreme in $(G,T)/(H,S)$ if $c$ contains a vertex of another cycle of $E$ and which is without exits in $(G,T)/(H,S)$ otherwise. 
    
\item If $\alpha$ is an infinite path 
such that $T(\alpha^0)$ contains neither sinks, infinite emitters, nor vertices on cycles, then there are admissible pairs $(H,S)$ and $(G,T)$ of $E$ such that $(G,T)/(H,S)$ is cofinal and that 
$\alpha$ is a terminal path of $(G,T)/(H,S).$
\end{enumerate}
\label{corollary_construction_of_terminal_elements} 
\end{corollary}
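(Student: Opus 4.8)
The plan is to treat all three parts with one construction. Write $W$ for the vertex set of the given object, so $W=\{v\}$ in (1), $W=c^0$ in (2), and $W=\alpha^0$ in (3), and set
\[G=\ol{T(W)},\qquad H=\ol{T(W)}\setminus R(W),\qquad S=(G-H)\cap B_H.\]
First I would record that this is legitimate: $H$ is hereditary (if $u\ge w$ and $u\notin R(W)$ then $w\notin R(W)$) and saturated (if a regular $u$ has all successors outside $R(W)$, then $u$ cannot reach $W$, so $u\notin R(W)$), $G$ is hereditary and saturated, and $S\subseteq B_H$; hence $(H,S)$ and $(G,\emptyset)$ are admissible pairs with $(H,S)\le (G,\emptyset)$. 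The point of subtracting exactly $R(W)$, and of throwing the ``bad'' body infinite emitters into $S$, is that then no vertex of $W$ lands in $H$ — in (1) with $v$ a sink one has $\ol{\{v\}}\subseteq R(v)$ so $H=\emptyset$; in the remaining cases each vertex of $W$ emits an edge whose range lies again in $R(W)$, or (for $v$ an infinite emitter not on a cycle) $v$ is not regular and $v\in R(v)$, so the $\Lambda_n$-process producing $H$ never reaches $W$ — and moreover no breaking-vertex clone $v'$ arises in $E':=(G,\emptyset)/(H,S)$, since such a $v'$ would require $v\in(G-S)\cap B_H^G$, which is empty by the choice of $S$. Thus $W$, together with all edges of $E$ internal to $W$, survives in $E'$; in particular $v$ is a sink of $E'$ in (1) (all its edges point into $H$), $c$ is a cycle of $E'$ in (2), and $\alpha$ is an infinite path of $E'$ in (3).

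The heart of the matter is that $E'$ is cofinal, which I would prove by verifying the relevant clause of Theorem \ref{theorem_graded_simple}(3). The body of $E'$ is $G-H=\ol{T(W)}\cap R(W)$: it has no sink (a body vertex all of whose successors lay in $H$ could not reach $W$), its only infinite emitters are those in $S$, which are regularized in $E'$ and, lying in $T(W)\cap R(W)$, lie on cycles, and — after the removal of $T(W)\setminus R(W)$ into $H$ — every forward vertex of the body reaches $W$; the spine vertices $w^p$ reach $\ra(p)\in G-H$, hence reach $W$. Applying Lemma \ref{lemma_saturated_closure} inside $E'$ with $V=W$, and using $G=\ol{T(W)}$ to see that every body vertex roots to $T(W)$ in $E$ and hence, after the deletion, to $W$ in $E'$, gives that $E'^0$ is the hereditary and saturated closure of $W$ in $E'$. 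In (1) this is the saturated closure of the sink $v$ and $E'$ is row-finite and acyclic, so clause (3a) holds. In (2) the deletion forces $T^{E'}(c^0)\subseteq R^{E'}(c^0)$, so $c$ is a cycle of $E'$ that is without exits when $T^{E'}(c^0)=c^0$ and extreme otherwise, and clause (3b) or (3c) holds. In (3) the hypothesis on $T(\alpha^0)$ carries over to $T^{E'}(\alpha^0)$ (no sink, infinite emitter, or cycle is created), and one checks that every infinite path of $E'$ issuing from $\alpha^0$ satisfies the return condition $T(\cdot)\subseteq R(\cdot)$ of Definition \ref{definition_terminal_path}, again because the vertices reachable-from but not reaching $\alpha^0$ were precisely what was deleted; so $\alpha$ is a terminal path of $E'$ and clause (3d) holds.

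Finally, for the dichotomy in (2): $c$ has an exit in $E'$ iff some $w\in c^0$ emits in $E'$ an edge that is not the edge of $c$, and the range $u$ of such an edge lies in $G-H\subseteq R(W)$, hence both is reached from and reaches $c^0$; splicing shortest paths witnessing this with an arc of $c$ yields a cycle through $w$ and $u$ distinct from $c$, so $c$ shares a vertex with another cycle — and conversely, if $c$ meets a cycle $c''\ne c$, then at a vertex where $c''$ diverges from $c$ the $c''$-edge is an edge not on $c$ whose range reaches back to $c^0$, so it survives the deletion and is an exit of $c$ in $E'$. I expect this equivalence, together with the bookkeeping in the previous paragraph — in particular that $E'^0$ really is the saturated closure of $W$ computed \emph{inside} $E'$ rather than in $E$, that deleting $H$ and breaking $S$ neither truncates $W$ nor manufactures a sink or an off-cycle infinite emitter in the intended cluster, and that the return condition of Definition \ref{definition_terminal_path} is preserved in part (3) — to be the main technical obstacle; all of it should reduce, via Lemmas \ref{lemma_saturated_closure} and \ref{lemma_v^G}, to the defining identities $G=\ol{T(W)}$, $H=\ol{T(W)}\setminus R(W)$, $S=(G-H)\cap B_H$.
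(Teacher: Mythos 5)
For parts (1) and (2) your uniform recipe unwinds to the same admissible pairs the paper uses: by the $\Lambda_n$-induction behind Lemma \ref{lemma_saturated_closure}, the set $\ol{T(W)}\setminus R(W)$ is exactly the hereditary and saturated closure of $T(W)-R(W)$, which is $\emptyset$ (for $v$ a sink), $\ol{\ra(\so^{-1}(v))}$ (for $v$ an infinite emitter not on a cycle), and the paper's $H$ in part (2); and since $G$ is hereditary, your $S=(G-H)\cap B_H$ coincides with the paper's $G\cap B_H^G$. Your cofinality check via Lemma \ref{lemma_saturated_closure} and Theorem \ref{theorem_graded_simple}, and your exit dichotomy for $c$, are essentially the paper's arguments, so those two parts are sound.

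Part (3) has a genuine gap at the step ``every infinite path of $E'$ issuing from $\alpha^0$ satisfies $T(\cdot)\subseteq R(\cdot)$ \ldots because the vertices reachable-from but not reaching $\alpha^0$ were precisely what was deleted.'' Deleting $\ol{T(\alpha^0)}\setminus R(\alpha^0)$ only guarantees $T(\beta^0)\subseteq R(\alpha^0)$ for such a path $\beta$; Definition \ref{definition_terminal_path} requires the strictly stronger $T(\beta^0)\subseteq R(\beta^0)$, i.e.\ return to $\beta$ itself, not to $\alpha$. Concretely, let $E^0=\{v_i\}_{i\ge 0}\cup\{w_i\}_{i\ge 0}$ with edges $v_i\to v_{i+1}$, $w_i\to w_{i+1}$, $v_0\to w_0$, and $w_i\to v_{i+10}$, and let $\alpha=v_0v_1v_2\ldots$. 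Then $T(\alpha^0)=E^0$ contains no sinks, infinite emitters, or cycles, and every vertex reaches $\alpha^0$, so your $H$ is empty and $E'=E$; yet the path $\beta$ running $v_0\to w_0\to w_1\to\cdots$ starts on $\alpha$ and has $v_1\in T(\beta^0)\setminus R(\beta^0)$, so $\alpha$ is not terminal in $E'$. The paper instead removes the closure of $\bigcup_\beta (T(\beta^0)-R(\beta^0))$ over all infinite paths $\beta$ starting on $\alpha$, which targets exactly the needed condition; but be aware that in this same example that union contains $v_1\in\alpha^0$, so it truncates $\alpha$, and in fact no hereditary saturated $H$ disjoint from $\alpha^0$ can remove the $w_i$ (since $w_i\ge v_{i+10}$). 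So the difficulty is not an artifact of your particular $H$: under the stated hypothesis only a suitable suffix of $\alpha$ (here $v_1v_2\ldots$) can be made terminal, and your argument, as well as any argument keeping $\alpha$ itself, breaks at precisely this point.
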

\begin{proof}
To show (1), let $G=\ol{\{v\}},$ $H=\ol{\ra(\so^{-1}(v))}$ (possibly empty), and $T=S=\emptyset.$ If $v$ is a sink, then $G$ does not contain any infinite emitters, and if $v$ is an infinite emitter not in a cycle, then $v$ is the only infinite emitter in $G-H.$ In either case, $G\cap B_H^G=\emptyset.$ Thus, $(G,T)/(H,S)$ contains no vertices of the form $v',$ so $v$ is the only sink of $(G,T)/(H,S).$ The vertices of $G-H$ are in $R(v).$ If $p$ is a path such that $w^p$ is a vertex of $(G,T)/(H,S)$, then $w^p$ is in the root $R^{(G,T)/(H,S)}(v)$ of $v$ in $(G,T)/(H,S).$ Hence, 
$((G,T)/(H,S))^0=R^{(G,T)/(H,S)}(v).$
The graph $(G,T)/(H,S)$ is row-finite, acyclic, and without infinite paths. By Lemma \ref{lemma_saturated_closure}, $((G,T)/(H,S))^0=\ol{T^{(G,T)/(H,S)}(v)},$ so $(G,T)/(H,S)$ is cofinal by Theorem \ref{theorem_graded_simple}. 

To show (2), let $G=\ol{c^0}.$ The set $T(c^0)-R(c^0),$ possibly empty, is hereditary, so its saturated closure $H$ is hereditary and saturated. Let $T=\emptyset$ and $S=G\cap B_H^G.$ By the definition of $S,$ no vertices of the form $v'$ are in $(G,T)/(H,S).$ Similarly as in part (1), $((G,T)/(H,S))^0=R^{(G,T)/(H,S)}(c^0).$ The set $((G,T)/(H,S))^0-T^{(G,T)/(H,S)}(c^0)$ contains no infinite emitters and every infinite path with vertices in this set eventually reaches a vertex of $T^{(G,T)/(H,S)}(c^0)$ by the definition of $G$ and $H.$ By Lemma \ref{lemma_saturated_closure}, $((G,T)/(H,S))^0=\ol{T^{(G,T)/(H,S)}(c^0)},$ so $(G,T)/(H,S)$ is cofinal by Theorem \ref{theorem_graded_simple}.
If $c$ contains a vertex of another cycle of $E$, then $c$ has exits in $(G,T)/(H,S)$ and, as 
$((G,T)/(H,S))^0=R^{(G,T)/(H,S)}(c^0),$ $c$ is extreme in $(G,T)/(H,S).$ If $c$ contains no vertex of another cycle of $E$, then $c$ is without exits in $(G,T)/(H,S)$ by the definition of $G$ and $H.$ 

To show (3), let $G=\ol{\alpha^0}.$ Let  $V=\bigcup \left(T(\beta^0)-R(\beta^0)\right)$ where the union is taken over infinite paths $\beta$ originating in a vertex of $\alpha$ (possibly empty), let $H=\ol{V},$ and $T=S=\emptyset.$ Since $G$ contains no infinite emitters, $G\cap B_H^G=\emptyset,$ so $(G,T)/(H,S)$ contains no vertices of the form $v'.$  
By the definition of $G$ and $H$, $\alpha$ is terminal in $(G,T)/(H,S)$ and $((G,T)/(H,S))^0=R^{(G,T)/(H,S)}(T^{(G,T)/(H,S)}(\alpha^0)).$ The graph $(G,T)/(H,S)$ is row-finite, with neither sinks nor cycles, and any infinite path contains a vertex of $T^{(G,T)/(H,S)}(\alpha^0),$ so 
$((G,T)/(H,S))^0$ is the saturated closure of $T^{(G,T)/(H,S)}(\alpha^0)$ by Lemma \ref{lemma_saturated_closure}. By Theorem \ref{theorem_graded_simple}, 
$((G,T)/(H,S))$ is cofinal.  
\end{proof}

As a side result, we note that Theorem \ref{theorem_graded_simple} implies that  purely infinite simplicity and its graded version are equivalent for Leavitt path algebras. We review some definitions related to these concepts. An idempotent $u$ of a ring $R$ is {\em finite} if $uR$ is not isomorphic to a proper direct summand of itself. A simple ring $R$ is {\em purely infinite simple} if every nontrivial one-sided ideal contains an infinite idempotent. In the graded case, a homogeneous idempotent $u$ of a graded ring $R$ is {\em finite} if $uR$ is not graded isomorphic to a proper graded direct summand of itself. A graded simple ring $R$ is {\em graded purely infinite simple} if every nontrivial one-sided graded ideal contains an infinite homogeneous idempotent (see \cite[Proposition 3.8.8]{LPA_book} for equivalent conditions to being purely infinite simple).  

\begin{corollary}
Let $E$ be a graph and let $K$ be a field. The following conditions are equivalent. 
\begin{enumerate}[\upshape(1)]
\item The algebra $L_K(E)$ is graded purely infinite simple. 

\item The set $E^0$ is the hereditary and saturated closure of $c^0$ for an extreme cycle $c$ (i.e., $E$ satisfies condition (3c) of Theorem \ref{theorem_graded_simple}).  

\item The graph $E$ is cofinal, every cycle of $E$ has an exit, and every vertex of $E$ connects to a cycle.

\item The algebra $L_K(E)$ is purely infinite simple. 
\end{enumerate}      
\label{corollary_graded_pis}
\end{corollary}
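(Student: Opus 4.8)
The proof rests on Theorem \ref{theorem_graded_simple} together with the known description of purely infinite simple Leavitt path algebras. I would first record the two ``graph-theoretic'' equivalences. The equivalence (2) $\Leftrightarrow$ (3) is immediate from Theorem \ref{theorem_graded_simple}: condition (2) is exactly case (3c) of that theorem, and the properties listed there ($E$ cofinal, $E^0=R(c^0)$, every cycle extreme and hence with an exit, every infinite emitter on a cycle) yield (3); conversely, if (3) holds then $E$ is cofinal, so by Theorem \ref{theorem_graded_simple} one of (3a)--(3d) holds, and the hypothesis that every vertex connects to a cycle excludes the acyclic cases (3a) and (3d) while the hypothesis that every cycle has an exit excludes (3b), leaving (3c)$=$(2). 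The equivalence (3) $\Leftrightarrow$ (4) is the classical characterization of purely infinite simplicity of Leavitt path algebras (\cite[Theorem 3.1.10]{LPA_book}; see also the equivalent conditions in \cite[Proposition 3.8.8]{LPA_book}).

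Next I would establish (1) $\Rightarrow$ (2). If $L_K(E)$ is graded purely infinite simple it is in particular graded simple, hence $E$ is cofinal and one of the cases (3a)--(3d) of Theorem \ref{theorem_graded_simple} holds. In cases (3a) and (3d) the graph $E$ is acyclic, so for the sink $v$ of (3a), respectively any vertex $v$ on the terminal path of (3d), the only path from $v$ to $v$ is $v$ itself and therefore the corner $vL_K(E)v$ equals $Kv\cong K$; in case (3b), for a vertex $v$ on the exit-free cycle $c$ we have $T^E(v)=c^0$, so the paths from $v$ to $v$ are precisely the powers of $c$ and $vL_K(E)v\cong K[x,x^{-1}]$. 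In all three cases $vL_K(E)v$ is a graded field. But a corner $uRu$ of a graded purely infinite simple ring $R$ at a nonzero homogeneous idempotent $u$ is again graded purely infinite simple (the graded analogue of the standard fact for purely infinite simple rings), and in particular, consistently with the convention under which $M_n(K)$ and $M_n(K[x,x^{-1}])$ are excluded (as in \cite[Proposition 3.1.14]{LPA_book}), it is not a graded division ring --- a contradiction. Hence $E$ satisfies (3c), i.e.\ (2).

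Finally (2) $\Rightarrow$ (1), which, combined with (1) $\Rightarrow$ (2) $\Leftrightarrow$ (3) $\Leftrightarrow$ (4), gives the equivalence of all four conditions. Assuming (3c), Theorem \ref{theorem_graded_simple} shows $E$ is cofinal, so $L_K(E)$ is graded simple; it then remains to produce an infinite homogeneous idempotent inside an arbitrary nonzero graded one-sided ideal $J$. I would mirror the standard proof of \cite[Theorem 3.1.10]{LPA_book} inside the graded category: $J$ contains a nonzero homogeneous idempotent that is graded-equivalent to a vertex $v$, and since $v$ connects to an extreme cycle $c$ which has an exit, $v$ dominates a strictly smaller homogeneous subidempotent --- obtained by leaving $c$, or the path leading into $c$, along a non-cycle edge --- whose corner module is isomorphic to $vL_K(E)$, so that $v$, and with it the idempotent inside $J$, is infinite. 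The step I expect to be the main obstacle is exactly this last one: one must check that every module isomorphism used in the classical argument can be chosen graded, i.e.\ that traversing the extreme cycle does not introduce an essential degree shift obstructing a graded isomorphism. Granting this, $L_K(E)$ is graded purely infinite simple and the proof is complete.
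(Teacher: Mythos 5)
Your treatment of (2) $\Leftrightarrow$ (3) and of (3) $\Leftrightarrow$ (4) coincides with the paper's (the paper cites \cite[Theorem 11]{Gene_Gonzalo_pis} for the latter). The problems lie in the other two implications.

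For (1) $\Rightarrow$ (2), your claim that in case (3d) of Theorem \ref{theorem_graded_simple} the corner $vL_K(E)v$ equals $Kv$ is false. That corner is spanned by the elements $pq^*$ with $\so(p)=\so(q)=v$ and $\ra(p)=\ra(q)$, not by closed paths based at $v$; acyclicity only rules out the latter. In the quintessential (3d) example --- the infinite line with doubled edges --- the two edges $e,f$ emitted by the first vertex $v$ give $ef^*\in vL_K(E)v$ with $ef^*\notin Kv$ (indeed $e^*(ef^*)f\neq 0$ while $e^*vf=0$), and the degree-zero part of this corner is an infinite-dimensional ultramatricial algebra containing a copy of $M_{2^n}(K)$ for every $n$. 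So your ``graded field'' contradiction collapses in exactly the case that distinguishes the fourth primary color from the first. The paper avoids this with a single appeal to \cite[Theorem 4.12]{Lia_traces}: in cases (3a), (3b), and (3d) the algebra $L_K(E)$ is directly finite, hence has no infinite idempotents at all. That is also the natural repair of your argument (the ultramatricial corner is directly finite, though not a graded field), but as written your case (3d) is wrong; moreover your auxiliary claim that a corner of a graded purely infinite simple ring at a homogeneous idempotent is again graded purely infinite simple is asserted rather than proved.

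For (2) $\Rightarrow$ (1) you explicitly leave open the step you yourself identify as the main obstacle, so the proposal is incomplete there as well. The paper closes it differently and more economically: by the graded version of \cite[Proposition 3.1.7]{LPA_book} it suffices, for each nonzero homogeneous $a$, to produce homogeneous $x,y$ with $xay$ an infinite idempotent; by \cite[Theorem 2.2.11]{LPA_book} there are paths $p,q$ and $0\neq k\in K$ with $paq=kv$ for some vertex $v$, and every vertex is an infinite idempotent by \cite[Proposition 3.1.6]{LPA_book} because it connects to an extreme cycle. The degree bookkeeping you worry about is thus absorbed into the homogeneous reduction $paq=kv$ rather than into a graded-module isomorphism along the cycle. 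As it stands, your proposal establishes neither (1) $\Rightarrow$ (2) nor (2) $\Rightarrow$ (1).
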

\begin{proof}
To show (1) $\Rightarrow$ (2), assume that  $L_K(E)$ is graded purely infinite simple. Then $L_K(E)$ is graded simple, so one part of condition (3) of Theorem \ref{theorem_graded_simple} holds. If conditions (3a), (3b) or (3d) hold, then $L_K(E)$ is directly finite by \cite[Theorem 4.12]{Lia_traces}, so no idempotent is infinite. This shows that condition (3c) necessarily has to hold. 

To show the converse (2) $\Rightarrow$ (1), assume that (2) holds for $E.$ By the graded version of \cite[Proposition 3.1.7]{LPA_book}, it is sufficient to show that for every homogeneous and nonzero $a\in L_K(E),$ there are homogeneous $x,y\in L_K(E)$ such that $xay$ is an infinite idempotent. As every vertex connects to an extreme cycle, every vertex is an infinite idempotent by \cite[Proposition 3.1.6]{LPA_book}. In addition, for a homogeneous element $a\neq 0,$ there are paths $p$ and $q$ and $0\neq k\in K$ such that $paq=kv$ for some $v\in E^0$ by \cite[Theorem 2.2.11]{LPA_book}. Thus, we can take $x=k^{-1}p$ and $y=q.$

The implication (2) $\Rightarrow$ (3) follows from Theorem \ref{theorem_graded_simple}. Conversely, if (3) holds, then $E$ is cofinal, so exactly one condition from part (3) of Theorem \ref{theorem_graded_simple} holds. Since every cycle of $E$ has an exit, it is not condition (3b). As every vertex of $E$ connects to a cycle, it is neither (3a) nor (3d). Hence, it is (3c) and so condition (2) of the corollary holds. 

The equivalence of (3) and (4) is shown in \cite[Theorem 11]{Gene_Gonzalo_pis}. 
\end{proof}

\section{Constructive characterization of a composition series}
\label{section_comp_series_characterization}
 
Let $\Sink$ denote the hereditary and saturated closure of the set of sinks, $\NE$ denote the hereditary and saturated closure of the set of vertices on cycles without exits, $\EC$ denote the hereditary and saturated closure of the set of vertices on extreme cycles, and $I(\Ter_{fin})$ be the ideal generated by the union $\Sink\cup\NE\cup\EC$. For graphs with finitely many vertices, $I(\Ter_{fin})=I_{lce}$ defined as in \cite[Definition 3.7.8]{LPA_book}. The 
Leavitt path algebra of the graph
\[\xymatrix{\bullet\ar@/_1pc/ [r] \ar@/^1pc/ [r] &\bullet\ar @/_1pc/ [r] \ar@/^1pc/ [r]&
\bullet\ar @/_1pc/ [r] \ar@/^1pc/ [r]& \bullet\ar@{.}[r]&}\]
is graded simple and both $I(\Ter_{fin})$ and $I_{lce}$ of this algebra are trivial.  

Let $\Ter_\infty$ denote the hereditary and saturated closure of the set of vertices on terminal paths. Let $\Ter(E)$ denote the hereditary and saturated closure of $\Sink \cup \NE\cup \EC \cup \Ter_\infty$ (equivalently, the saturated closure of the hereditary set $T_E$ of terminal vertices) of a graph $E.$ If $E^0$ is finite, $I(\Ter(E))$ is $I_{lce}$. 

\begin{proposition}
For any graph $E,$ let $\mathcal C$ be the set of the clusters of $E.$ For any $C\in \mathcal C,$ the ideal $I(C)$ generated by $C$ is a graded simple algebra and  
\[I(\Ter(E))= I(\Sink)\oplus I(\NE)\oplus I(\EC)\oplus I(\Ter_\infty)=\bigoplus_{C\in\mathcal  C}I(C).\]
\label{proposition_ter_ideal} 
\end{proposition}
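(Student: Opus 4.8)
The plan is to first establish that each $I(C)$ is graded simple, then prove the two displayed equalities by identifying all four summands with ideals of clusters and showing the sum is direct.

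For the first claim, fix a cluster $C$. By Lemma \ref{lemma_terminal_cluster}, $C = T(C)$, so $C$ is hereditary, and its saturated closure $\ol C$ is a hereditary and saturated set; moreover, the same lemma shows that $\ol C$ is the hereditary and saturated closure of a single sink, of $c^0$ for a cycle without exits, of $c^0$ for an extreme cycle, or of $\alpha^0$ for a terminal path $\alpha$, according to the type of $C$. I would then observe that $I(C) = I(\ol C)$ (since $\ol C$ is the saturated closure of the hereditary set $C$, both generate the same graded ideal), and that $I(\ol C)$ is graded isomorphic to $L_K(P_{(\ol C,\emptyset)})$ by the porcupine construction (Theorem \ref{theorem_porcupine_quotient} with $(H,S)=(\emptyset,\emptyset)$, or \cite[Theorem 3.3]{Lia_porcupine}). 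It remains to check that the porcupine graph $P_{(\ol C,\emptyset)}$ is cofinal, which I would do via Theorem \ref{theorem_graded_simple}: in each of the four cases the porcupine graph has exactly one cluster (the one coming from $C$) whose saturated closure is all of $P_{(\ol C,\emptyset)}^0$, because the ``spines'' $w^p$ added by the porcupine construction are precisely the vertices needed so that the saturated closure of $C$ (computed in the porcupine graph) fills out the whole vertex set — this is essentially Lemma \ref{lemma_saturated_closure} applied to the spine structure. Hence $L_K(P_{(\ol C,\emptyset)})$, and therefore $I(C)$, is graded simple.

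For the displayed equalities, I would argue as follows. By definition $\Ter(E)$ is the saturated closure of the hereditary set $T_E = \bigcup_{C\in\mathcal C} C$ (using that $T_E$ is hereditary, which follows from Lemma \ref{lemma_terminal_cluster}, each $C$ satisfying $T(C)=C$). Since a graded ideal generated by a union of hereditary and saturated sets is the sum of the ideals they generate, $I(\Ter(E)) = \sum_{C\in\mathcal C} I(C)$, and likewise $I(\Ter(E)) = I(\Sink) + I(\NE) + I(\EC) + I(\Ter_\infty)$ because $T_E$ is the union of the sinks, vertices on cycles without exits, vertices on extreme cycles, and vertices on terminal paths, and the saturated closure of a union is the saturated closure of the union of saturated closures. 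To upgrade these sums to \emph{direct} sums, I would use the standard fact that distinct clusters $C \neq C'$ satisfy $T(C)\cap T(C') = \emptyset$ — indeed by Lemma \ref{lemma_terminal_cluster} each vertex of $T(C)=C$ is a terminal vertex lying on some $p\in E^{\leq\infty}$ with a fixed $\sim$-class, so two vertices in $T(C)\cap T(C')$ would force $C\approx C'$ — hence the hereditary and saturated closures $\ol C$ are pairwise disjoint as well (if $u\in\ol C\cap\ol{C'}$ then, since $\ol C\subseteq R(T(C))$ by Lemma \ref{lemma_terminal_cluster} and its analogues, $u$ would connect to both $C$ and $C'$; but then by the description of $\ol C$ via $\Lambda_n$, tracing down from $u$ reaches $C$-vertices and $C'$-vertices, and since each regular vertex's entire edge-range must lie in one closure, one gets $\ol C=\ol{C'}$). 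For graded ideals $I(\ol C)$ and $I(\ol{C'})$ with $\ol C\cap\ol{C'}=\emptyset$, the product $I(\ol C)I(\ol{C'})$ lies in $I(\ol C\cap\ol{C'})=I(\emptyset)=0$, so the sum $\sum_C I(C)$ is a direct sum of two-sided ideals with zero pairwise products; the same reasoning groups the clusters by type to give the four-fold direct sum. I would spell out the idempotent-free, locally-unital version of ``$I J = 0$ and $I+J$ a ring-direct-sum'' using the homogeneous local units of $L_K(E)$.

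I expect the main obstacle to be the cofinality verification of the porcupine graphs in the first claim — specifically, checking in the terminal-path case (3d) that $P_{(\ol\alpha^0,\emptyset)}^0$ really is the saturated closure of the image of $\alpha^0$, since here the original graph can be infinite, acyclic, and row-finite but with infinitely long spines, and one must apply Lemma \ref{lemma_saturated_closure}(3) carefully to the sets $P_v(T(\cdot))$ of spine-paths. The disjointness of closures of distinct clusters is conceptually routine but needs the $\Lambda_n$-description and the saturation condition handled cleanly; I would present it as a short lemma-style paragraph rather than a computation. Everything else (reducing $I(C)$ to $I(\ol C)$, the additivity of ideals under unions of hereditary saturated sets, $I\cdot J\subseteq I(H\cap G)$) is standard and I would cite \cite[Proposition 2.5.6]{LPA_book} and the lattice isomorphism recalled in Section \ref{subsection_quotient_and_porcupine}.
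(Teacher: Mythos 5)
Your proof is correct and takes essentially the same route as the paper: graded simplicity of $I(C)$ is obtained from cofinality of the porcupine graph $P_{(\ol C,\emptyset)}$ via condition (2) of Theorem \ref{theorem_graded_simple}, and the direct-sum decomposition from the pairwise disjointness of the clusters and of the four type-sets of terminal vertices. The only difference is that the paper simply cites \cite[Proposition 2.4.7]{LPA_book} for the decomposition where you re-derive it by hand; your parenthetical argument that the saturated closures of distinct clusters are disjoint is stated a bit loosely (the clean way is that any element of $E^{\leq\infty}$ starting in $\ol C\cap\ol{C'}$ would have to meet both $C$ and $C'$, forcing a common vertex by hereditariness), but the claim is true and the cited proposition only requires the hereditary sets $C$ themselves to be pairwise disjoint, which is immediate.
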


\begin{proof}
For $C\in \mathcal C,$ condition (2) of Theorem \ref{theorem_graded_simple} holds for the porcupine graph $P_{(\ol C, \emptyset)}$ of $\ol C.$ Hence, $P_{(\ol C, \emptyset)}$  is cofinal, so $I(C)$ is graded simple. The sets $T_E\cap \Sink, T_E\cap\NE ,$ $T_E\cap\EC,$ and $T_E\cap\Ter_\infty$ are mutually disjoint and different clusters are also mutually disjoint. So, the proposition follows from \cite[Proposition 2.4.7]{LPA_book} stating that if $V_i\subseteq E^0$ for $i\in I$ are pairwise disjoint and if $H_i=\ol{V_i}$ for $i\in I,$ then $I(\bigcup_{i\in I}V_i)=I(\bigcup_{i\in I}H_i)=\bigoplus_{i\in I}I(H_i)=\bigoplus_{i\in I}I(V_i).$ 
\end{proof}

\begin{remark}
In \cite{Cam_et_al}, the authors consider the set $P_{b^\infty}$ as the set of $v\in E^0$ such that $T(v)$ contains infinitely many vertices with bifurcations or an infinite emitter and generalize $I_{lce}$ by considering its extension by the ideal generated with the set $P_{b^\infty}$. This generalization is successful in the sense that every vertex connects to an element of $\Sink\cup\NE\cup\EC\cup  P_{b^\infty}$ (see \cite[Lemma 2.3]{Cam_et_al}). However, the elements of the set $P_{b^\infty}$ may not be terminal in the sense we are interested in. Also, the set $P_{b^\infty}$ may not be disjoint from $\EC$ because of the infinite emitters on extreme cycles, so we do not have a direct sum decomposition as in \cite[Theorem 3.7.9]{LPA_book} or in the proposition above. 
\end{remark}

Lemma below exhibits a group of necessary conditions for a graph to have a composition series. 

\begin{lemma} If a graph $E$ has a composition series, then the following holds. 
\begin{enumerate}[\upshape(a)]
\item $\Ter(E)$ is nonempty. 

\item The set of terminal vertices of $E$ contains finitely many clusters.

\item The set of breaking vertices of $\Ter(E)$ is finite.
\end{enumerate} 
\label{lemma_for_comp_series} 
\end{lemma}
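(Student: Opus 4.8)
The plan is to show that if $E$ has a composition series $(\emptyset,\emptyset)=(H_0,S_0)\lneq\ldots\lneq(H_n,S_n)=(E^0,\emptyset)$, then each of (a), (b), (c) fails only if $E$ would admit an infinite strictly ascending or descending chain of admissible pairs, which is impossible by Corollary \ref{corollary_equivalence} (or more directly since the composition series forces the lattice of graded ideals to satisfy both chain conditions below the improper ideal in a controlled way). More concretely, the strategy for each part is to locate inside the first nontrivial factor $(H_1,S_1)/(H_0,S_0)$ — or, in the general step, inside some factor $(H_{i+1},S_{i+1})/(H_i,S_i)$ — the ``terminal'' data that Theorem \ref{theorem_graded_simple} guarantees, since that factor is cofinal. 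I would first record the key observation that $I(H_1,S_1)$, being graded isomorphic to $L_K((H_1,S_1)/(H_0,S_0))$ which is graded simple, is a \emph{minimal} nonzero graded ideal of $L_K(E)$; equivalently $(H_1,S_1)$ is a minimal nonzero admissible pair, and by Theorem \ref{theorem_graded_simple} the graph $(H_1,S_1)/(H_0,S_0)$ satisfies exactly one of (3a)--(3d).

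For (a): the factor $(H_1,S_1)/(H_0,S_0)=(H_1,S_1)$ is cofinal and nonempty, so by Theorem \ref{theorem_graded_simple} it contains a terminal vertex $v$ of one of the four colors. Since $(H_1,S_1)=(H_1,S_1)/(\emptyset,\emptyset)$ is built from $E$ by adding only ``spine'' vertices $w^p$ and ``primed'' vertices to the body $H_1\cup S_1$, a sink/cycle-without-exits/extreme-cycle/terminal-path in this graph either lies in $E$ already (giving a vertex of $E$ that is terminal, hence in $\Ter(E)$) or is one of the added vertices; in the latter case one traces back along the spine to a vertex of $H_1\cup S_1\subseteq E^0$ and checks, using that $(H_1,S_1)$ is minimal and saturated-hereditary, that $\Ter(E)\ne\emptyset$. (In the $S_1\ne\emptyset$ case one uses Lemma \ref{lemma_v^G} together with Example \ref{example_with_B_H_and_emptyset}.) I would phrase this uniformly: $H_1$ is a nonzero hereditary saturated set, so it contains a vertex, and minimality of $(H_1,S_1)$ plus cofinality of its porcupine force that vertex's color-type to be visible in $E$, giving $\Ter(E)\ne\emptyset$.

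For (b): suppose $E$ has infinitely many clusters $C_1,C_2,\ldots$. By Proposition \ref{proposition_ter_ideal}, $I(\Ter(E))=\bigoplus_{C}I(C)$ with each $I(C)$ a nonzero graded simple ideal, hence a minimal nonzero graded ideal of $L_K(E)$. Then $I(C_1)\lneq I(C_1)\oplus I(C_2)\lneq I(C_1)\oplus I(C_2)\oplus I(C_3)\lneq\cdots$ is a strictly increasing infinite chain of graded ideals, contradicting the existence of a (necessarily finite, by Corollary \ref{corollary_composition_equivalent_with_talented}) composition series; equivalently it violates Corollary \ref{corollary_equivalence} via the corresponding admissible pairs $\ol{C_1\cup\cdots\cup C_k}$. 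For (c): I would argue similarly, showing that if $B_{\Ter(E)}$ were infinite, say containing distinct breaking vertices $v_1,v_2,\ldots$, then the admissible pairs $(\Ter(E),\{v_1,\ldots,v_k\})$, $k=1,2,\ldots$, form a strictly increasing infinite chain (each $v_i^{\Ter(E)}$ lies in $I(\Ter(E),\{v_1,\ldots,v_k\})$ exactly when $i\le k$, by \cite[Theorem 2.4.8]{LPA_book}), again contradicting Corollary \ref{corollary_equivalence}.

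The main obstacle I anticipate is part (a) in the presence of breaking vertices: when $S_1\ne\emptyset$ the minimal pair $(H_1,S_1)$ need not have $H_1\ne\emptyset$ a priori — one could have $(H_1,S_1)=(H_1,S_1)$ with $H_1$ small and the ``terminal'' behavior of the cofinal porcupine-quotient concentrated on spine vertices $w^p$ for $p\in F_2$. Handling this cleanly requires the bookkeeping of Definition \ref{definition_porcupine_quotient} (which body vertex a spine attaches to) together with Lemma \ref{lemma_v^G}, to certify that the color-type realized in $(H_1,S_1)/(\emptyset,\emptyset)$ corresponds to a genuine terminal vertex of $E$ and hence a nonempty $\Ter(E)$. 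Parts (b) and (c) are comparatively routine once Proposition \ref{proposition_ter_ideal} and Corollary \ref{corollary_equivalence} are in hand; the only care needed there is to verify that the displayed chains are strictly increasing, which follows from the direct-sum decomposition and from \cite[Theorem 2.4.8]{LPA_book} respectively.
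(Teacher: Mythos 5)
Your proposal is correct in substance but takes a genuinely different route from the paper on parts (a) and (c). For (a), the paper never inspects the composition series itself: it argues by contradiction, assuming $E$ has no sink, no extreme cycle, no cycle without exits, and no terminal path, and then runs a lengthy case analysis producing an infinite strictly decreasing chain $H_0\supsetneq H_1\supsetneq\ldots$ of hereditary and saturated sets, contradicting Corollary \ref{corollary_equivalence}. You instead extract the terminal data from the first composition factor: $P_{(H_1,S_1)}$ is cofinal, Theorem \ref{theorem_graded_simple} gives it exactly one of the four colors, and you transfer that color back to $E$. This is shorter and more conceptual, but the transfer is precisely the step you leave as a sketch, and in your approach it carries the entire weight of part (a); to make it a proof you must verify that the sinks, the cycles together with their exit structure, and the tails of infinite paths of $P_{(H_1,S_1)}$ all lie in the hereditary set $H_1$ and are literally sinks, cycles, and terminal paths of $E$. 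This does go through (each spine vertex $w^p$ has out-degree one and feeds toward the body after finitely many steps, and vertices of $S_1$ emit only into $H_1$), and the ``main obstacle'' you flag is in fact vacuous: a vertex of $S_1$ is an infinite emitter of $P_{(H_1,S_1)}$ lying on no cycle of that graph, while the proof of Theorem \ref{theorem_graded_simple} shows cofinality forces every infinite emitter onto a cycle, so $S_1=\emptyset$ automatically. Part (b) is essentially the paper's argument, phrased with ideals and Proposition \ref{proposition_ter_ideal} rather than with the saturated closures of unions of clusters. For (c) the paper passes to the quotient $E/(\Ter(E),\emptyset)$, observes that $B_{\Ter(E)}$ becomes a set of sinks there, and applies part (b) to that quotient via Proposition \ref{proposition_porcupine_and_quotient}; your direct construction of the strictly increasing, never constant chain $(\Ter(E),\{v_1,\ldots,v_k\})$, $k=1,2,\ldots$, violating Corollary \ref{corollary_equivalence} is equally valid and slightly more self-contained.
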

\begin{proof}
If $E^0$ is finite, $\Ter(E)$ is nonempty since there is either a sink, a cycle without exits, or an extreme cycle by \cite[Lemma 3.7.10]{LPA_book}. If $E^0$ is infinite and there are neither sinks, extreme cycles, nor cycles without exits, then every vertex is on an infinite path containing an infinite and strictly decreasing chain of vertices by Proposition \ref{proposition_existence_of_the_four_types}. For brevity, let us say that such an infinite path is {\em strictly decreasing}. We claim that there is a strictly decreasing infinite path which is terminal. Assume, on the contrary, that no strictly decreasing infinite path is terminal. We consider the following cases: 
$T(\alpha^0)\nsubseteq R(\alpha^0)$ for all strictly decreasing infinite paths $\alpha$ and $T(\alpha^0)\subseteq R(\alpha^0)$ for some strictly decreasing infinite path $\alpha.$ 

In the first case, let $\alpha_0$ be a strictly decreasing infinite path and let  $H_0=\ol{\alpha_0^0}$. As $T(\alpha_0^0)\nsubseteq R(\alpha_0^0),$ there is a vertex $v_0\in T(\alpha_0^0)-R(\alpha_0^0).$ As $v_0$ does not connect to a sink or a cycle which is extreme or without exits, $v_0$ is the source of a strictly decreasing infinite path $\alpha_1.$ Since $v_0\notin R(\alpha_0^0),$ no vertex of $\alpha_1$ is in $R(\alpha_0^0).$ Hence, no vertex of $\alpha_0$ is in $T(\alpha_1^0).$ Thus, if we let $H_1=\ol{\alpha_1^0},$ we have that $\so(\alpha_0)\notin H_1$ by Lemma \ref{lemma_saturated_closure}. Since $\alpha_1^0\subseteq T(\alpha_0^0),$ we have that $H_1\subseteq H_0.$ So, $H_1\subsetneq H_0.$

As $T(\alpha_1^0)\nsubseteq R(\alpha_1^0),$ there is a vertex $v_1\in T(\alpha_1^0)- R(\alpha_1^0).$ Having $v_1,$ we can obtain $\alpha_2$ in the same way we obtained $\alpha_1$ having $v_0.$ Thus, for $H_2=\ol{\alpha_2^0},$ we have that 
$H_2\subseteq H_1$ and $\so(\alpha_1)\in H_1-H_2.$ By our assumptions, this process does not terminate, so we obtain a chain $H_0\supsetneq H_1\supsetneq H_2\supsetneq\ldots$. Thus, $E$ has no composition series by Corollary \ref{corollary_equivalence} and we reach a contradiction. 
   
In the second case, let $\alpha_0$ be a strictly decreasing infinite path such that $T(\alpha_0^0)\subseteq R(\alpha_0^0).$ Since $\alpha_0$ is not terminal, there is $v_0\in \alpha_0^0$ such that one of the three conditions holds: (1) $v_0$ emits a path whose range is in a cycle $c$, (2) $v_0$ emits a path whose range is an infinite emitter $v$ which is not on a cycle, or (3) $v_0$ does not connect to infinite emitters or vertices on cycles and it emits an infinite path $\beta$ such that $T(\beta^0)\nsubseteq R(\beta^0).$ In each case, we aim to find $w_0\in \alpha_0^0$ such that $v_0\notin H_0=\ol{\{w_0\}}.$

If (1) holds, $c^0\subseteq T(\alpha_0^0)$  implies that $T(c^0)\subseteq T(\alpha_0^0)\subseteq R(\alpha_0^0).$
The cycle $c$ is not extreme nor without exits, so there is a path $p$ with $\so(p)\in c^0$ and $\ra(p)\notin R(c^0).$ 
As $\ra(p)\in R(\alpha^0_0),$ such path $p$ can be chosen so that $w_0=\ra(p)\in \alpha_0^0-R(c^0).$ The condition $w_0\notin R(c^0)$ implies that $c^0\nsubseteq T(w_0).$
The vertices $v_0$ and $w_0$ are both on $\alpha_0,$ so either $v_0\leq w_0$ or $v_0>w_0.$ Since $v_0\in R(c^0)$ and $c^0\nsubseteq T(w^0),$ $v_0>w_0.$ As $c^0\nsubseteq T(w^0),$ $c^0\nsubseteq H_0$ by Lemma \ref{lemma_saturated_closure}. So,  $c^0\subseteq T(v_0)$ implies that $v_0\notin H_0.$ 

If (2) holds, $v\in T(\alpha_0^0)\subseteq R(\alpha_0^0),$ so $v$ emits a path $p$ with $w_0=\ra(p)\in \alpha^0_0.$ The vertices $v_0$ and $w_0$ are both on $\alpha_0,$ so either $v_0\leq w_0$ or $v_0>w_0.$ Since $v$ is not on a cycle, $v_0>w_0$ and $v\notin T(w_0).$ The relation $v\notin T(w_0)$ implies that $v\notin H_0$ by Lemma \ref{lemma_saturated_closure}. As $v\in T(v_0),$ we have that $v_0\notin H_0.$

If (3) holds, there is $v\in \beta^0$ which emits a path with the range in $T(\beta^0)-R(\beta^0).$ As $T(\beta^0)\subseteq  T(\alpha_0^0)\subseteq R(\alpha_0^0),$  there is a path $p$ with $\so(p)=v,$ $w_0=\ra(p)\in \alpha_0^0-R(\beta^0).$ The vertices $v_0$ and $w_0$ are both on $\alpha_0,$ so either $v_0\leq w_0$ or $v_0>w_0.$ Since $w_0\notin R(\beta^0)$ and $v_0\in R(\beta^0),$  $v_0>w_0.$ The condition $w_0\notin R(\beta^0)$ implies that no vertex of $\beta$ is in $T(w_0).$ By Lemma \ref{lemma_saturated_closure}, $v_0\notin H_0.$  

Let $\alpha_1$ be the suffix of $\alpha$ originating at $w_0.$ As no strictly decreasing infinite path is terminal, we have that every suffix of $\alpha_0$ is not terminal. So, $\alpha_1$ is not terminal. In addition, $T(\alpha_1^0)\subseteq T(\alpha_0^0)\subseteq R(\alpha^0_0)\subseteq R(\alpha_1^0),$ so we can repeat the construction and let $v_1$ be a vertex of $\alpha_1$ with the same properties as $v_0$ for $\alpha_0,$ let $w_1$ be obtained analogously to $w_0$ so that $v_1\in H_0$ is not in $H_1=\ol{\{w_1\}}.$ As $w_1\in T(w_0),$ $T(w_1)\subseteq T(w_0)$ which implies that $H_1\subseteq H_0.$ Hence, $H_1\subsetneq H_0.$ Continuing in this manner, we obtain a chain $H_0\supsetneq H_1\supsetneq \ldots $ 
which does not terminate because $\alpha_n$ is not terminal for each $n$. By Corollary \ref{corollary_equivalence}, $E$ has no composition series. So, we reach a contradiction. 

As we reach a contradiction in both cases, there is a strictly decreasing infinite path  $\alpha$ which is terminal. So, $\alpha^0\subseteq \Ter(E)$ implying that $\Ter(E)\neq\emptyset.$ 
 
If (b) fails, index the clusters by an infinite cardinal $\lambda$ and let $H_n$ be the hereditary and saturated closure of the vertices in the first $n$ clusters. The chain $H_0\subsetneq H_1\subsetneq\ldots$ does not terminate since $\lambda$ is infinite. By Corollary \ref{corollary_equivalence} and this fact, $E$ has no composition series.  

To show part (c), note that $E/(\Ter(E),\emptyset)$ has a composition series by 
Proposition \ref{proposition_porcupine_and_quotient}.
As $B_{\Ter(E)}$ corresponds to a set of sinks in $E/(\Ter(E),\emptyset)$ and the number of sinks of $E/(\Ter(E),\emptyset)$ 
is finite by part (b), $B_{\Ter(E)}$ is finite.      
\end{proof} 

Using Lemma \ref{lemma_for_comp_series}, it is not difficult to construct graphs which do not have composition series. For example,
each of the following three graphs fails exactly one of the three conditions of  Lemma \ref{lemma_for_comp_series}. The symbol $\infty$ in the last graph indicates that a vertex emitting the edge labeled by this symbol emits infinitely many edges to the sink of the graph.   
\[\xymatrix{\\\bullet\ar@(ur,ul)\ar[r]&\bullet\ar@(ur,ul)\ar[r]&\bullet\ar@(ur,ul)\ar@{.}[r]&}\hskip2.2cm\xymatrix{\\\bullet&\bullet&\bullet\;\;\;\ar@{.}[r]&}\hskip2.2cm \xymatrix{\bullet &&\\
\bullet\ar[r]\ar[u]^{\infty} &\bullet \ar[r] 
\ar[ul]^{\infty} & \bullet\ar@{.}[r] \ar[ull]^{\infty} & \ar@{.}[ulll]
}\]

In addition, the graph below satisfies all three conditions of Lemma \ref{lemma_for_comp_series} ($v_0$ is a terminal vertex, the cluster $\{v_0\}$ is the only cluster, and the graph is row-finite, so part (c) trivially holds). 
However, this graph does not have a composition series because $\emptyset\leq \{v_0\}\leq \{v_0, v_1\}\leq\ldots$ is an increasing chain such that the porcupine-quotient graph of any two consecutive terms is cofinal.  

\[\xymatrix{\ar@{.}[r]&\bullet_{v_2}\ar@(ur,ul)\ar[r]& \bullet_{v_1}\ar@(ur,ul)\ar[r]&\bullet_{v_0}\ar@(ur,ul)}\]

The main result of this section, Theorem  \ref{theorem_comp_series}, shows that the four graphs above have a complete
list of features which obstruct the existence of a composition series of a graph. This result also provides a way of constructing a composition series if it exits. 

\begin{definition}
For a graph $E$, we define the {\em composition quotients} $F_n$ of $E$ as follows. 

Let $F_0=E.$ If $\Ter(F_n)\subsetneq F_n^0,$ we let 
\[F_{n+1}=F_n/(\Ter(F_n), B_{\Ter(F_n)}).\]
If $\Ter(F_n)=F_n^0,$ we let $F_{n+1}=F_{n+2}=\ldots =\emptyset.$
\end{definition}
Note that the case $\Ter(F_n)=\emptyset$ for some $n$ implies that $F_m=F_n$ for every $m\geq n.$

\begin{theorem}
The following conditions are equivalent for a graph $E$. 
\begin{enumerate}[\upshape(1)]
\item The graph $E$ has a composition series. 
\item The following holds.  
\begin{enumerate}[\upshape(i)]
\item Conditions (a), (b), and (c) of Lemma \ref{lemma_for_comp_series} hold for the composition quotient $F_n$ for each $n$ for which $F_n\neq\emptyset.$

\item There is a nonnegative integer $n$ such that $F_{n+1}=\emptyset$ and $F_n\neq\emptyset.$
\end{enumerate}
\end{enumerate}
\label{theorem_comp_series}
\end{theorem}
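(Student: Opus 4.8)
The plan is to prove the two implications separately, with the forward direction being the more conceptual one and the backward direction being essentially a bookkeeping argument.

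\textbf{The implication (1) $\Rightarrow$ (2).} First I would observe that, by Corollary \ref{corollary_composition_equivalent_with_talented}, having a composition series is inherited by quotients of the form $E/(H,S)$ for an admissible pair $(H,S)$: indeed by Proposition \ref{proposition_porcupine_and_quotient}, if $E$ has a composition series then so does $E/(H,S).$ Since each nonzero composition quotient $F_{n+1}$ is exactly $F_n/(\Ter(F_n), B_{\Ter(F_n)})$, an easy induction shows that every nonzero $F_n$ has a composition series. Then condition (i) follows immediately from Lemma \ref{lemma_for_comp_series} applied to each $F_n.$ The real content is condition (ii): I must rule out the possibility that $F_n\neq\emptyset$ for all $n.$ Here the key point is that $F_{n+1}$ is obtained from $F_n$ by passing to the quotient by $I(\Ter(F_n), B_{\Ter(F_n)})$, a \emph{nonzero} graded ideal (nonzero because $\Ter(F_n)\neq\emptyset$ by Lemma \ref{lemma_for_comp_series}(a), using that $F_n$ has a composition series). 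Translating via Proposition \ref{proposition_monoids_of_porcupine_quotient} and the lattice isomorphisms, pulling back the graded ideals of $F_{n+1}$ to graded ideals of $F_n$ (using Lemma \ref{lemma_graded_ideal_transitive} and the quotient correspondence), and iterating, an infinite chain $F_0\to F_1\to F_2\to\cdots$ of proper quotients would produce a strictly increasing infinite chain of admissible pairs of $E$ starting at $(\emptyset,\emptyset)$: namely $(\emptyset,\emptyset)\lneq (\Ter(E),B_{\Ter(E)})\lneq (H_2,S_2)\lneq\cdots$ where $(H_{n+1},S_{n+1})$ is the admissible pair of $E$ whose ideal is the preimage of $I(\Ter(F_n),B_{\Ter(F_n)})$ under the composite quotient map $L_K(E)\to L_K(F_n).$ Each inclusion is strict precisely because the ideal added at stage $n$ is nonzero in $F_n.$ By Corollary \ref{corollary_equivalence}, such a chain contradicts the assumption that $E$ has a composition series, so (ii) must hold.

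\textbf{The implication (2) $\Rightarrow$ (1).} Assume (i) and (ii), and fix $n$ with $F_{n+1}=\emptyset$, $F_n\neq\emptyset.$ For each $k\le n$ let $(G_k,U_k)$ be the admissible pair of $E$ with $L_K(E)/I(G_k,U_k)$ graded isomorphic to $L_K(F_k)$ (so $(G_0,U_0)=(\emptyset,\emptyset)$), and let $(G_{k+1},U_{k+1})$ correspond to the preimage in $L_K(E)$ of the ideal $I(\Ter(F_k),B_{\Ter(F_k)})$ of $L_K(F_k)$; since $F_{n+1}=\emptyset$ means $\Ter(F_n)=F_n^0$, the ideal $I(\Ter(F_n),B_{\Ter(F_n)})$ is all of $L_K(F_n)$, so $(G_{n+1},U_{n+1})=(E^0,\emptyset).$ This gives a chain $(\emptyset,\emptyset)=(G_0,U_0)\leq (G_1,U_1)\leq\cdots\leq (G_{n+1},U_{n+1})=(E^0,\emptyset)$, and each inclusion is strict because $\Ter(F_k)\neq\emptyset$ for $k\le n$ by condition (i)(a). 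The consecutive quotient $(G_{k+1},U_{k+1})/(G_k,U_k)$ has Leavitt path algebra graded isomorphic, by Theorem \ref{theorem_porcupine_quotient}, to $I(G_{k+1},U_{k+1})/I(G_k,U_k)\cong I(\Ter(F_k),B_{\Ter(F_k)})$ as an ideal of $L_K(F_k)$, which by the porcupine construction is graded isomorphic to $L_K(P_{(\Ter(F_k),B_{\Ter(F_k)})})$. This last need not be cofinal — it may be a direct sum of several graded simple pieces (one per cluster, by Proposition \ref{proposition_ter_ideal}) and it may have breaking vertices — so the chain above is not yet a composition series. The remaining step is to refine each such block into finitely many cofinal steps: using condition (i)(b) there are only finitely many clusters, so by Proposition \ref{proposition_ter_ideal} one can interpose admissible pairs splitting off one cluster at a time, and using condition (i)(c) the breaking-vertex set $B_{\Ter(F_k)}$ is finite, so by Example \ref{example_with_B_H_and_emptyset} (the porcupine-quotient $(H,S\cup\{v\})/(H,S)$ is cofinal) one can further interpose finitely many pairs adding one breaking vertex at a time. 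Each refined consecutive porcupine-quotient is then cofinal by Theorem \ref{theorem_graded_simple}, and the whole refined chain is finite, so $E$ has a composition series.

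\textbf{Expected main obstacle.} The routine-but-delicate part is the refinement step in (2) $\Rightarrow$ (1): one must verify carefully that inserting the "one cluster at a time" pairs and then the "one breaking vertex at a time" pairs between $(G_k,U_k)$ and $(G_{k+1},U_{k+1})$ yields admissible pairs of $E$ (not merely of $F_k$) forming a genuine strictly increasing chain, and that the resulting consecutive porcupine-quotient graphs are exactly the cofinal graphs identified in Proposition \ref{proposition_ter_ideal} and Example \ref{example_with_B_H_and_emptyset}. The conceptual heart, though, is the non-termination argument in (1) $\Rightarrow$ (2)(ii): making precise that an infinite tower of proper composition quotients corresponds to a strictly ascending infinite chain of admissible pairs of $E$, so that Corollary \ref{corollary_equivalence} applies. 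Both hinge on the compatibility of the porcupine-quotient and quotient constructions with the lattice of graded ideals, which is supplied by Theorem \ref{theorem_porcupine_quotient}, Proposition \ref{proposition_porcupine_and_quotient}, and Lemma \ref{lemma_graded_ideal_transitive}.
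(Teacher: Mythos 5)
Your proposal is correct and follows essentially the same route as the paper: condition (i) via induction with Proposition \ref{proposition_porcupine_and_quotient}, condition (ii) by converting an infinite tower of proper composition quotients into a strictly increasing chain of admissible pairs of $E$ and invoking Corollary \ref{corollary_equivalence} (the paper builds the hereditary saturated sets $H_n=H_{n-1}\cup\Ter(F_n)$ explicitly rather than pulling back ideals, but the content is the same), and the converse by splitting $\Ter(F_k)$ into clusters via Proposition \ref{proposition_ter_ideal} and then adding breaking vertices one at a time via Example \ref{example_with_B_H_and_emptyset}. The only organizational difference is that the paper carries out your "refinement step" as a downward induction, showing $F_{k}$ has a composition series implies $F_{k-1}$ does by applying Proposition \ref{proposition_porcupine_and_quotient} to the porcupine and quotient of $(\Ter(F_{k-1}),B_{\Ter(F_{k-1})})$, which sidesteps the delicate task of realizing the interposed pairs as admissible pairs of $E$ itself.
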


Informally, this theorem states that a composition series exists exactly when the process of iteratively cutting the terminal vertices and the subsets of their breaking vertices ends after finitely many steps. If a graph has a composition series, the part of the proof showing (2) $\Rightarrow$ (1) provides an algorithm for obtaining a composition series of the graph. 

Before the proof, we consider the composition quotients in some examples.  
\begin{enumerate}
\item Let $E$ be the graph from part (1) of Example \ref{example_three_porcupine_quotients}. For this graph, $\Ter(E)$ is the saturated closure of the sinks and $\Ter(E)=E^0.$ Hence, $F_1=\emptyset.$ A composition series of $E$ can be obtained by considering the saturated closure of one of the sinks, then the saturated closure of that sink and another one, and, finally, the saturated closure $E^0$ of all three sinks. For example, by considering $\ol{\{w_0\}}$ first, we obtain the set $H$ from Example \ref{example_three_porcupine_quotients}. Considering the saturated closure of $H\cup \{v_0\}$ next, for example, produces the set $G$ from Example \ref{example_three_porcupine_quotients}. Lastly, the saturated closure of all three sinks is $E^0.$ This produces the composition series $\emptyset\leq H\leq G\leq E^0$ considered in Example \ref{example_three_porcupine_quotients}.  

\item  Let $E$ be the graph part (2) of Example \ref{example_three_porcupine_quotients}. For this graph, $\Ter(E)=\{w\}$ so that $F_1=E/(\{w\}, \{v\})$ is $\;\;\;\xymatrix{\bullet_v\ar@(lu,ld)}.$ As $\Ter(F_1)=\{v\}=F_1^0,$ $F_2=\emptyset.$ A composition series of $E$ can be produced by considering $\ol{\{w\}}=\{w\}$ without the breaking vertex $v$ of $\{w\}$ first, then $\{w\}$ together with the breaking set $\{v\}$, and, finally, adding the terminal vertex $v$ of $F_1$ to the set $\{w\}$ to obtain $\ol{\{v, w\}}=E^0.$ This produces the series $(\emptyset, \emptyset)\leq (\{w\}, \emptyset)\leq (\{w\}, \{v\})\leq E^0$ from Example \ref{example_three_porcupine_quotients}. 
  
\item If $E$ is the graph $\xymatrix{
\bullet_{u_1}\ar[r]&\bullet_{u_2}\ar[r]  & \bullet_{u_3}\ar@{.>}[r]& \\
\bullet_{v_1} \ar[r]\ar[u]        & \bullet_{v_2}\ar[r]\ar[u] & \bullet_{v_3}\ar[u]\ar@{.>}[r]&},$ then $\Ter(E)=\{u_n\mid n=1,2,\ldots\}$ so that $F_1$ is the graph $\xymatrix{
\bullet_{v_1} \ar[r] & \bullet_{v_2}\ar[r] & \bullet_{v_3}\ar@{.>}[r]&}$ and  $F_1^0=\Ter(F_1).$ So, $F_2=\emptyset.$ As all terminal vertices of $E$ are in the same cluster, $\Ter(E)$ can be taken to be the first term of a composition series. As  $F_1$ also has only one cluster, adding the terminal vertices of $F_1$ to $\Ter(E)$ produces the sequence $\emptyset\leq \Ter(E)\leq E^0$ which is a composition series of $E$.
\end{enumerate}

\begin{proof} 
(1) $\Rightarrow$ (2). If (1) holds, then conditions (a), (b), and (c) of Lemma \ref{lemma_for_comp_series} hold for $E=F_0$ by Lemma \ref{lemma_for_comp_series}. If $F_1=\emptyset,$ then (2) holds. If $F_1\neq\emptyset,$ then $F_1$ is a quotient of $F_0,$ so $F_1$ has a composition series by Proposition \ref{proposition_porcupine_and_quotient} and  (a), (b), and (c) of Lemma \ref{lemma_for_comp_series} hold for $F_1$ by Lemma \ref{lemma_for_comp_series}. Continuing these arguments, we obtain that  (a), (b), and (c) of Lemma \ref{lemma_for_comp_series} hold for $F_n$ for each $n$ such that $F_n\neq \emptyset.$ Hence, (2i) holds. 

For every $n$ such that $F_n\neq\emptyset,$  the vertices of $F_n$ are the vertices of $E$ only since the quotient used to form $F_n$ is taken with respect to the admissible pair with the set of all breaking vertices, so no new vertices are added when forming $F_n$ from $F_{n-1}$. Hence, $\Ter(F_n)\subseteq E^0.$ Let $H_0=\Ter(E)$ and $H_n=H_{n-1}\cup\Ter(F_n)$ for any $n$ such that $F_n$ is nonempty. Note that the saturated closure of the terminal vertices of $F_n$ is taken in $F_n,$ not in $E,$ so the set $H_n$ includes infinite emitters which are regular in $F_n$ and breaking vertices of $H_{n-1}.$ %ova zadnja recenica dodata. consider adding specific construction all the way from beginning till end!!!!
The set $H_n$ is hereditary in $E$ since every vertex of $H_n$ emits edges only to $H_i$ for $i\leq n.$ We claim that $H_n$ is also saturated in $E$. If $\ra(\so^{-1}(v))\subseteq H_n$ for a regular vertex $v\in E^0,$ then either $\ra(\so^{-1}(v))\subseteq H_{n-1}$ or $\ra(\so^{-1}(v))\cap \Ter(F_n)\neq \emptyset.$ In the first case, using inductive argument and the fact that $H_0$ is saturated, we conclude that $v\in H_{n-1}\subseteq H_n.$ In the second case, $v$ is a regular vertex of $F_n$ and the ranges of all edges $v$ emits in $F_n$ are in $\Ter(F_n).$ As $\Ter(F_n)$ is saturated in $F_n$, $v\in \Ter(F_n)\subseteq H_n.$ 

If $F_{n+1}\neq \emptyset,$ then $\Ter(F_{n+1})\neq\emptyset$ by Lemma \ref{lemma_for_comp_series}, so $H_n\subsetneq H_{n+1}.$ To show that $(H_n, B_{H_n}) \leq (H_{n+1}, B_{H_{n+1}}),$ it is sufficient to check that $B_{H_n}\subseteq H_{n+1}\cup B_{H_{n+1}}.$ If $v\in B_{H_n},$
then the set $\so^{-1}(v)\cap \ra^{-1}(E^0-H_n)$ is finite, nonempty and equal to the union of the mutually disjoint sets 
$\so^{-1}(v)\cap \ra^{-1}(\Ter(F_{n+1}))$  and $\so^{-1}(v)\cap \ra^{-1}(E^0-H_{n+1}).$ If the second set is nonempty, $v\in B_{H_{n+1}}.$ If the second set is empty, then $v$ is a regular vertex of $F_{n+1}$ which emits all its edges to $\Ter(F_{n+1}).$ As $\Ter(F_{n+1})$ is saturated in $F_{n+1}$, $v\in \Ter(F_{n+1})\subseteq H_{n+1}.$  

Since $E$ has a composition series, the chain $(\emptyset, \emptyset)\leq (H_0, B_{H_0})\leq \ldots$ eventually becomes constant by Corollary \ref{corollary_equivalence}.  If $n$ is the smallest such that $H_n=H_{n+1},$ then $\Ter(F_{n+1})=\emptyset$ which implies that  $F_{n+1}=\emptyset$ by part (2i). Since $H_{n-1}\subsetneq H_n,$ $\Ter(F_n)\neq\emptyset,$ so $F_n\neq\emptyset.$ This shows that (2ii) holds.
 
(2) $\Rightarrow$ (1). By (2ii), there is $n\geq 0$ such that $F_{n+1}=\emptyset$ and $F_n\neq\emptyset.$ Thus, $\Ter(F_n)=F_n^0\neq\emptyset.$ Since condition (b) of Lemma \ref{lemma_for_comp_series} holds for $F_n$, there are finitely many clusters. 
By Proposition  \ref{proposition_ter_ideal}, $L_K(F_n)$ is graded isomorphic to a finite sum of graded simple algebras. As such an algebra, $L_K(F_n)$ has a graded composition series. By Corollary \ref{corollary_composition_equivalent},  $F_n$ has a composition series. 

The condition (a) of Lemma \ref{lemma_for_comp_series} holds for $F_{n-1},$ so $\Ter(F_{n-1})\neq\emptyset.$ Since condition (b) of Lemma \ref{lemma_for_comp_series} holds, $\Ter(F_{n-1})$ has finitely many clusters. 
If $m$ is a positive integer, $C_i$ are the clusters of $F_{n-1}$ for $i=1,\ldots, m,$ $H_0=\emptyset$ and  $H_i=\ol{C_1\cup\ldots\cup C_i}$ for $i=1,\ldots, m,$ then $H_m=\Ter(F_{n-1})$ and the chain 
\[(\emptyset, \emptyset)=(H_0, \emptyset)\lneq (H_1, \emptyset)\lneq (H_2, \emptyset)\lneq \ldots \lneq (H_m, \emptyset)=(\Ter(F_{n-1}), \emptyset)\]
is a chain of admissible pairs of $F_{n-1}.$ As $I(H_{i+1})=I(H_i)\oplus I(C_{i+1})$ and $ I(C_{i+1})$ is graded simple by Proposition \ref{proposition_ter_ideal}, the porcupine-quotient $(H_{i+1}, \emptyset)/(H_i, \emptyset)$ is cofinal for each $i=0, \ldots, m-1.$ 

By condition (c) of Lemma \ref{lemma_for_comp_series} for $F_{n-1},$ $B_{\Ter(F_{n-1})}$ is finite. If $B_{\Ter(F_{n-1})}=\{v_1, \ldots, v_k\},$ let $S_0=\emptyset$ and $S_{i+1}=S_i\cup\{v_{i+1}\}$ for $i=0,\ldots, k-1.$ We have that $S_k=B_{\Ter(F_{n-1})}.$ Let us extend the above chain by \[(\Ter(F_{n-1}), \emptyset)=(\Ter(F_{n-1}), S_0) \lneq (\Ter(F_{n-1}), S_1)\lneq\ldots \lneq (\Ter(F_{n-1}), B_{\Ter(F_{n-1})}).\]
The porcupine-quotient graph 
$(\Ter(F_{n-1}), S_{i+1})/(\Ter(F_{n-1}), S_i)$ is an acyclic and row-finite graph with a unique sink $v_{i+1}$ and without infinite paths (Example \ref{example_with_B_H_and_emptyset} also establishes this), so part (3a) of Theorem \ref{theorem_graded_simple} holds by Lemma \ref{lemma_saturated_closure}. Hence, this porcupine-quotient is cofinal. 

Consider the graded ideals corresponding to the admissible pairs of the concatenation of the above two chains of admissible pairs. These ideals form a graded composition series of the algebra $I((\Ter(F_{n-1}), B_{\Ter(F_{n-1})})).$ By Corollary \ref{corollary_composition_equivalent}, the graph $P_{(\Ter(F_{n-1}), B_{\Ter(F_{n-1})})}$ has a composition series. Thus, we have that both the porcupine 
$P_{(\Ter(F_{n-1}), B_{\Ter(F_{n-1})})}$ and the quotient $F_n=F_{n-1}/(\Ter(F_{n-1}),$ $B_{\Ter(F_{n-1})})$ have composition series, so  $F_{n-1}$ has a composition series by Proposition \ref{proposition_porcupine_and_quotient}.  
Repeating these arguments shows that if $F_{i+1}$ has a composition series, then $F_i$ has a composition series for all $i$ starting with $i=n-2$ and ending with $i=0.$ Thus, $F_0=E$ has a composition series. 
\end{proof}

Theorem \ref{theorem_comp_series} has the following corollary. 

\begin{corollary}
Every unital Leavitt path algebra has a graded composition series.
\label{corollary_unital_comp_series}
\end{corollary}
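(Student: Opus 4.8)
The plan is to reduce the statement to Theorem \ref{theorem_comp_series}. Recall from the prerequisites that $L_K(E)$ is unital exactly when $E^0$ is finite, and that Corollary \ref{corollary_composition_equivalent} says $L_K(E)$ has a graded composition series if and only if $E$ has a composition series. So it suffices to prove that every graph with finitely many vertices has a composition series, and for this I would verify conditions (2i) and (2ii) of Theorem \ref{theorem_comp_series} directly.

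The structural fact that drives the argument is that every composition quotient $F_n$ of a graph with finitely many vertices again has finitely many vertices. Here $F_0=E$ has $|E^0|<\infty$; and if $F_n^0$ is finite with $\Ter(F_n)\subsetneq F_n^0$, then $F_{n+1}=F_n/(\Ter(F_n),B_{\Ter(F_n)})$, and because the admissible pair used to form this quotient carries the \emph{entire} breaking set $B_{\Ter(F_n)}$ in its second coordinate, the quotient graph construction introduces no new vertices of the form $v'$ (the set $B_{\Ter(F_n)}-B_{\Ter(F_n)}$ is empty). Hence $F_{n+1}^0=F_n^0-\Ter(F_n)$, which is again finite, and strictly smaller whenever $\Ter(F_n)\neq\emptyset$.

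Granting this, the verification is short. For (2i): whenever $F_n\neq\emptyset$, conditions (a), (b), (c) of Lemma \ref{lemma_for_comp_series} hold for $F_n$, since (a) a nonempty graph with finitely many vertices always contains a sink, a cycle without exits, or an extreme cycle by \cite[Lemma 3.7.10]{LPA_book}, so $\Ter(F_n)\neq\emptyset$; (b) the clusters of $F_n$ partition a subset of the finite set $F_n^0$, hence there are finitely many; and (c) $B_{\Ter(F_n)}\subseteq F_n^0-\Ter(F_n)$ is finite. For (2ii): suppose toward a contradiction that $F_n\neq\emptyset$ for every $n$. Then $\Ter(F_n)\subsetneq F_n^0$ for all $n$ (otherwise some $F_{n+1}$ would be $\emptyset$), and since $\Ter(F_n)\neq\emptyset$ always, we get $|F_{n+1}^0|<|F_n^0|$ for all $n$, i.e.\ a strictly decreasing infinite sequence of positive integers — impossible. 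Hence some $F_{n+1}=\emptyset$; choosing the least such $n$ gives $F_{n+1}=\emptyset$ and $F_n\neq\emptyset$, which is (2ii). By Theorem \ref{theorem_comp_series}, $E$ has a composition series, so $L_K(E)$ has a graded composition series.

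There is no genuinely hard step; the only point that requires care is the claim that passing from $F_n$ to $F_{n+1}$ adds no vertices, which is exactly what keeps the vertex count finite and strictly decreasing and thus forces the process to terminate. Everything else follows formally from finiteness of $E^0$ together with \cite[Lemma 3.7.10]{LPA_book} and Theorem \ref{theorem_comp_series}.
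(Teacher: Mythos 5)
Your proposal is correct and follows essentially the same route as the paper: both reduce to Theorem \ref{theorem_comp_series}, observe that forming $F_{n+1}$ from $F_n$ with the full breaking set $B_{\Ter(F_n)}$ adds no new vertices so the vertex count stays finite and strictly decreases while $\Ter(F_n)\neq\emptyset$, and verify conditions (2i) and (2ii) from finiteness. The only cosmetic difference is that you invoke \cite[Lemma 3.7.10]{LPA_book} directly for the nonemptiness of $\Ter(F_n)$ where the paper cites its own Proposition \ref{proposition_existence_of_the_four_types}; for graphs with finitely many vertices these give the same conclusion.
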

\begin{proof}
Let $F_n$ for $n\geq 0$ be the composition quotients of $E$. Since $L_K(E)$ is unital, $E^0$ is finite and so $\Ter(E)$ is nonempty by Proposition \ref{proposition_existence_of_the_four_types} and the conditions (b) and (c) of Lemma \ref{lemma_for_comp_series} trivially hold. As $F_1$ is the quotient of $E$ with respect to an admissible pair with the entire breaking vertex set, $F_1$ also has finitely many vertices and so all three parts of Lemma \ref{lemma_for_comp_series} hold by the same argument. Continuing with such reasoning, we obtain that condition (2i) of Theorem \ref{theorem_comp_series} holds.

As $\Ter(E)=\Ter(F_0)$ is nonempty and no new vertices are added when forming $F_1,$ 
$|F_0^0|>|F_1^0|.$ Continuing applying the same argument, we have that $|F_i^0|>|F_{i+1}^0|$ for all $i$ such that $F_i\neq\emptyset.$ As $|E^0|$ is finite, there is a nonnegative integer $n$ such that $F_{n+1}=\emptyset.$ By taking smallest such $n,$ we have that $F_n\neq\emptyset.$ Thus, condition (2ii) of Theorem \ref{theorem_comp_series} holds. 
\end{proof}

The authors of \cite{Roozbeh_Alfilgen_Jocelyn} noted that if $E$ is finite, then $M_E^\Gamma$ has a composition series. 
By Corollaries \ref{corollary_unital_comp_series} and \ref{corollary_composition_equivalent_with_talented}, if $E$ has finitely many vertices (but possibly contains infinite emitters), then  $M_E^\Gamma$ has a composition series.

\section{Types of the talented monoids of cofinal porcupine-quotient graphs} 
\label{section_talented}

We recall that $\Gamma$ denotes the infinite cyclic group generated by an element $t.$
The monoid $M_E^\Gamma$ is cancellative (by \cite[Corollary 5.8]{Ara_et_al_Steinberg}) so the natural pre-order is, in fact, an order.
By \cite[Proposition 3.4]{Roozbeh_Lia_Comparable}, the relation $x< t^nx$ is impossible for any $x\in M_E^\Gamma$ and any positive integer $n.$ The remaining possibilities give rise to the following types.  
\begin{enumerate}
\item If $x=t^nx$ for some positive integer $n,$ we say that $x$ is {\em periodic}. 
 
\item If $x>t^nx$ for some positive integer $n,$ we say that $x$ is {\em aperiodic}. 

\item If $x$ and $t^nx$ are incomparable for any positive integer $n,$ we say that $x$ is {\em incomparable}.
\end{enumerate}
If $x$ is periodic or aperiodic, $x$ is {\em comparable}. This terminology matches the one used in  \cite{Roozbeh_Lia_Comparable}.
We note that \cite{Roozbeh_Alfilgen_Jocelyn} uses ``cyclic'' for ``periodic'' and ``non-comparable'' for ``incomparable''. In our terminology, the authors of \cite{Roozbeh_Alfilgen_Jocelyn} define a 
$\Gamma$-order-ideal $I$ of $M_E^\Gamma$ to be {\em periodic} (respectively, {\em comparable, incomparable}) if its every nonzero element is periodic (respectively, comparable, incomparable). We also say that $I$ is {\em aperiodic} if its every nonzero element is aperiodic. 

The proofs of Lemma \ref{lemma_order_ideals_generated_by_terminal} and Theorem \ref{theorem_periodic_aperiodic_incomp} use some results of \cite{Roozbeh_Lia_Comparable} and their corollaries which we summarize in the following proposition.

\begin{proposition}
Let $E$ be an arbitrary graph. 
\begin{enumerate}
\item \cite[Lemma 3.9 and Theorem 3.19]{Roozbeh_Lia_Comparable} If $x\in M_E^\Gamma$ is comparable, then there is a vertex $v$ on a cycle, a nonnegative integer $n,$ and $z\in M_E^\Gamma$ such that $x=t^ny+z$ where $y=[v]$ or $y=[q_Z^v].$ If $w$ is a vertex such that $[w]$ is comparable, then $w$ connects to a vertex in a cycle.

\item If $v\in E^0,$ then $[v]$ is a periodic element of $M_E^\Gamma$ if and only if $v$ is in the saturated closure of a finite set of vertices on cycles without exits. 

\item  If $v\in E^0$ is in the saturated  closure of a finite set of vertices on cycles, then $[v]$ is comparable. If at least one of those cycles has an exit, $[v]$ is aperiodic.

\item The element $[v]$ of $M_E^\Gamma$ is comparable for every $v\in E^0$ if and only if every $v\in E^0$ is in the saturated closure of a finite the set of vertices on cycles. 

\item \cite[Theorems 4.2 and 4.5 and Corollary 4.7]{Roozbeh_Lia_Comparable} The monoid $M_E^\Gamma$ is periodic (respectively, aperiodic or incomparable) if and only if $[v]$ is periodic (respectively, aperiodic or incomparable) for every vertex $v\in E^0.$
\end{enumerate}
\label{proposition_comparable_paper}
\end{proposition}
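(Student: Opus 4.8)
Parts (1) and (5) are quoted verbatim from \cite{Roozbeh_Lia_Comparable}, so nothing is to be proved there; I would prove (2) and (3) from a common inductive core and deduce (4) from (3) together with the second sentence of (1).

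For (3), suppose $v$ lies in the saturated closure of a finite set $W$ of vertices on cycles, and recall that this closure is $\bigcup_{n\geq 0}\Lambda_n(W)$ with $\Lambda_0(W)=W$ and $\Lambda_{n+1}(W)=\Lambda_n(W)\cup\{u\mid u\text{ regular},\ \ra(\so^{-1}(u))\subseteq\Lambda_n(W)\}$. I would induct on the least $n$ with $v\in\Lambda_n(W)$. If $n=0$, then $v=\so(e_1)$ for a cycle $c=e_1\cdots e_\ell$; using the relation $[\so(e_i)]=\sum_{e\in\so^{-1}(\so(e_i))}t[\ra(e)]$ when $\so(e_i)$ is regular, or $[\so(e_i)]=[q^{\so(e_i)}_{\{e_i\}}]+t[\ra(e_i)]$ when $\so(e_i)$ is an infinite emitter, gives $[\so(e_i)]\geq t[\so(e_{i+1})]$, and composing these around $c$ yields $[v]\geq t^\ell[v]$, so $[v]$ is comparable. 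If $v\in\Lambda_{n+1}(W)\setminus\Lambda_n(W)$, then $v$ is regular and $[v]=\sum_{e\in\so^{-1}(v)}t[\ra(e)]$ is a \emph{finite} sum whose terms $[\ra(e)]$ are comparable by induction; letting $d$ be a common positive multiple of their finitely many periods gives $[\ra(e)]\geq t^d[\ra(e)]$ for all such $e$, whence $[v]=\sum_e t[\ra(e)]\geq t^d\sum_e t[\ra(e)]=t^d[v]$. The aperiodicity clause follows by carrying strict inequalities through the same induction: since $M_E^\Gamma$ is conical and cancellative, an exit on a cycle contributes a nonzero summand and strictly widens the relevant inequality, and strictness is preserved by $\Gamma$-translation and by the inductive step. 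Replacing ``$\geq$'' by ``$=$'' throughout (for a cycle without exits each $\so(e_i)$ is regular with a single out-edge) proves the implication $(\Leftarrow)$ of (2).

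The main obstacle is the implication $(\Rightarrow)$ of (2): from $[v]=t^n[v]$ one must recover a finite set of vertices on cycles without exits whose saturated closure contains $v$. I would pass to the $\Gamma$-order-ideal $J$ of $M_E^\Gamma$ generated by $[v]$, which corresponds to an admissible pair $(\ol{\{v\}},S)$ and hence is isomorphic to $M^\Gamma_{P_{(\ol{\{v\}},S)}}$, show that $J$ is periodic, and then invoke the structural description of periodic talented monoids from \cite{Roozbeh_Lia_Comparable} applied to $P_{(\ol{\{v\}},S)}$. The crux is that periodicity propagates from the order-unit $[v]$ to all of $J$: here one uses part (1) together with the observation that a sink, an infinite emitter not on a cycle, an exit from a cycle, or an infinite path avoiding cycles inside $T(v)$ each produces a vertex $u$ with $v\geq u$ and $[u]$ not comparable (hence not periodic), contradicting that every element of $J$ is dominated by a multiple of the periodic element $[v]$. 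A K\"onig-type argument on the finitely branching, finite-depth tree of paths descending from $v$ before meeting a cycle then supplies the required finite generating set of cycle vertices, and Lemma \ref{lemma_saturated_closure} certifies that $v$ lies in its saturated closure.

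Finally, for (4): the direction $(\Leftarrow)$ is immediate from (3). For $(\Rightarrow)$, if $[v]$ is comparable for every $v\in E^0$, then by the second sentence of (1) every vertex of $E$ connects to a vertex on a cycle; the exclusion argument of the previous paragraph shows that for each $v$ the set $T(v)$ contains no sink and no infinite emitter off a cycle and admits no infinite path avoiding cycles, so by K\"onig's lemma a finite set $W$ of vertices on cycles meets every maximal cycle-avoiding path from $v$, and Lemma \ref{lemma_saturated_closure} places $v$ in the saturated closure of $W$. I expect the fussiest point throughout to be the bookkeeping around infinite emitters that do lie on cycles in the K\"onig and Lemma \ref{lemma_saturated_closure} steps, but this requires no idea beyond those already present in Lemma \ref{lemma_saturated_closure}.
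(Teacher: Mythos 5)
Parts (1) and (5), your induction for (3) (which is the paper's own argument, phrased via the filtration $\Lambda_n(W)$ instead of the path-length parameter $n_v$ the paper inducts on, and with aperiodicity obtained by a direct strictness/cancellativity argument rather than by appeal to part (2) in the base case), and the $(\Leftarrow)$ directions of (2) and (4) are all sound and essentially coincide with what the paper does. The difficulty is concentrated exactly where you locate ``the main obstacle'': the forward directions of (2) and (4), and there your argument has a genuine gap.

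Your exclusion argument asserts that an infinite emitter not on a cycle, or the range of an exit from a cycle, lying in $T(v)$ produces a vertex $u$ with $[u]$ \emph{not comparable}. That is false: if $u\in T(c^0)$ for a cycle $c$, then $u\in\ol{c^0}$ and $[u]$ is comparable by part (3) itself --- for instance the range of an exit of an extreme cycle, or an infinite emitter reachable from a cycle, are of this kind. What (2)$(\Rightarrow)$ actually requires is that such configurations are incompatible with $[v]$ being \emph{periodic}, and your justification --- ``contradicting that every element of $J$ is dominated by a multiple of the periodic element $[v]$'' --- is not a proof: in a pre-ordered monoid, being dominated by a translate of a periodic element does not transfer periodicity or exclude aperiodicity, and establishing that it does for $M_E^\Gamma$ is precisely the content of \cite[Theorem 4.1]{Roozbeh_Lia_Comparable}, which the paper simply cites and then translates into saturated-closure language via Lemma \ref{lemma_saturated_closure}. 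The same defect affects (4)$(\Rightarrow)$: comparability of every $[w]$ does not, by your argument, exclude an infinite emitter in $T(v)$ that connects to cycles but is not reachable from one (such a vertex can never enter a saturated closure, since saturation adjoins only regular vertices), and your K\"onig-type step needs finite branching, which fails at infinite emitters --- Lemma \ref{lemma_saturated_closure}(3) is the correct substitute, but only after the infinite emitters have been dealt with. The paper outsources exactly these two directions to \cite[Proposition 2.2, Lemma 3.9, Theorems 3.21 and 4.1]{Roozbeh_Lia_Comparable}; a self-contained treatment would have to reprove those results, and the sketch as written does not.
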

\begin{proof}
Parts (1) and (5) follow directly from the noted results of \cite{Roozbeh_Lia_Comparable}.   

By \cite[Theorem 4.1]{Roozbeh_Lia_Comparable}, $[v]$ is periodic for $v\in E^0$ if and only if any path originating at $v$ is a prefix of a path $p$ ending in one of finitely many cycles without exits and such that all vertices of $p$ are regular and every infinite path originating at $v$ ends in a cycle with no exits. This last condition is equivalent with $v$ being in the saturated closure of the vertices on finitely many cycles without exits by Lemma \ref{lemma_saturated_closure}. This shows that (2) holds.  

If the assumption of (3) holds, let $V$ be the set of vertices of finitely many cycles such that $v$ is in the saturated closure of $V.$ Then, there is a nonnegative integer $k$ such that $v\in \Lambda_k(V)$ where $\Lambda_k(V)$ are the sets from the paragraph before Lemma \ref{lemma_saturated_closure}. We can choose $k$ to be the smallest such that $v\in \Lambda_k(V).$ So, if $k>0,$ then $v\notin \Lambda_{k-1}(V)$. By the definition of $\Lambda_k(V),$ any element of $E^{\leq\infty}$ originating at $v$ contains an element of $V$ which shows that there are only finitely many paths originating at $v$ and terminating in a vertex of $V$ such that no vertex, except the range, is in $V.$ Let $n_v$ be the maximal element of the set of lengths of such paths and let $n_w$ be defined analogously for any $w\in \Lambda_i(V)$ for $i\leq k$. If $n_v=0,$ then $v\in V,$ so $v$ is on a cycle which implies that $[v]$ is comparable. If $v$ connects to a cycle with an exit, then one of the cycles in $V$ has to have an exit and $[v]$ is aperiodic by part (2). If $n_v>0,$ then $v$ is regular, $k>0,$ $\ra(\so^{-1}(v))\subseteq \Lambda_{k-1}(V),$ and the relation $n_{\ra(e)}<n_v$ holds for every $e\in \so^{-1}(v).$ Using induction, $[\ra(e)]$ is comparable, so $[\ra(e)]\geq t^{m_e}[\ra(e)]$ for some positive integer $m_e$  for every $e\in \so^{-1}(v).$ Let $m$ be the least common multiple of the elements of $\{m_e \mid e\in \so^{-1}(v)\}.$ Then $[\ra(e)]\geq t^m[\ra(e)]$ which implies that \[[v]=\sum_{e\in\so^{-1}(v)}t\,[\ra(e)]\geq \sum_{e\in\so^{-1}(v)}t\,t^m[\ra(e)]=t^m\sum_{e\in\so^{-1}(v)}t\,[\ra(e)]=t^m[v]\] so that $[v]$ is comparable.  
If at least one of the cycles with vertices in $V$ has an exit, then $[\ra(e)]$ is aperiodic for some $e\in \so^{-1}(v)$ and $[\ra(e)]>t^{m_e}[\ra(e)].$ Thus, $[v]>t^m[v],$ so $[v]$ is aperiodic. 

The implication ($\Rightarrow$) of (4) holds by \cite[Proposition 2.2, Lemma 3.9 and Theorem 3.21]{Roozbeh_Lia_Comparable} and ($\Leftarrow$) holds by part (3). 
\end{proof}

Lemma \ref{lemma_for_lemma} is used in the proof of Lemma \ref{lemma_order_ideals_generated_by_terminal} which is needed for Theorem \ref{theorem_periodic_aperiodic_incomp}.
Recall that the isomorphism of the lattice of admissible pairs of a graph $E$ and the lattice of $\Gamma$-order-ideals of $M^\Gamma_E$ maps $(H,S)$ to the $\Gamma$-order-ideal $J^\Gamma(H,S)$ generated by $\{[v]\mid v\in H\}\cup\{[v^H]\mid v\in S\}.$ The inverse isomorphism maps a $\Gamma$-order-ideal $I$ onto $(H,S)$ for  
$H=\{v\in E^0\mid [v]\in I\},$ and $S=\{v\in B_H\mid [v^H]\in I\}.$  

\begin{lemma}
If $E$ is any graph, $V$ is a set of vertices of $E,$ $H=\ol V,$ and $I$ is the $\Gamma$-order-ideal generated by $V,$ then $H=\{v\in E^0\mid [v]\in I\}$ and $\{v\in B_H\mid [v^H]\in I\}=\emptyset$ (i.e. $I=J^\Gamma(H, \emptyset)$). 
\label{lemma_for_lemma} 
\end{lemma}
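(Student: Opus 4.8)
The plan is to reduce everything to the single identity $I=J^\Gamma(H,\emptyset)$. Once this is proved, both assertions of the lemma follow at once from the lattice isomorphism between admissible pairs and $\Gamma$-order-ideals recalled immediately before the statement: that isomorphism is a bijection, it sends $(H,\emptyset)$ to $J^\Gamma(H,\emptyset)$, and its inverse sends a $\Gamma$-order-ideal to the admissible pair whose two components are read off exactly by the formulas in the two displayed conditions. One inclusion is immediate. Since $V\subseteq T(V)\subseteq \ol V=H$, every generator $[v]$ (for $v\in V$) of $I$ is among the generators $[w]$ (for $w\in H$) of $J^\Gamma(H,\emptyset)$, and $J^\Gamma(H,\emptyset)$ is a $\Gamma$-order-ideal; hence $I\subseteq J^\Gamma(H,\emptyset)$.

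For the reverse inclusion it suffices to show $[v]\in I$ for every $v\in H$. I would use the description $H=\ol V=\bigcup_{n\ge 0}\Lambda_n(T(V))$ from the paragraph preceding Lemma \ref{lemma_saturated_closure} and argue by strong induction on the least $n$ with $v\in\Lambda_n(T(V))$. If $v\in\Lambda_0(T(V))=T(V)$, pick $w\in V$ and a path $p$ with $\so(p)=w$, $\ra(p)=v$; the relation $[w]=t^{|p|}[v]+x$ in $M_E^\Gamma$ together with $[w]\in I$ and the order-ideal property gives $t^{|p|}[v]\in I$, and then closure of $I$ under the $\Gamma$-action yields $[v]=t^{-|p|}\bigl(t^{|p|}[v]\bigr)\in I$. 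If $v\in\Lambda_{n+1}(T(V))\setminus\Lambda_n(T(V))$, then $v$ is regular and $\ra(\so^{-1}(v))\subseteq\Lambda_n(T(V))$, so $[\ra(e)]\in I$ for each of the finitely many $e\in\so^{-1}(v)$ by the inductive hypothesis, and the defining relation $[v]=\sum_{e\in\so^{-1}(v)}t\,[\ra(e)]$ shows $[v]\in I$. Thus $J^\Gamma(H,\emptyset)\subseteq I$, and $I=J^\Gamma(H,\emptyset)$.

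Finally, since the correspondence between admissible pairs and $\Gamma$-order-ideals is a bijection with $(H,\emptyset)\mapsto J^\Gamma(H,\emptyset)=I$, the admissible pair that the inverse isomorphism assigns to $I$ is $(H,\emptyset)$; reading off its two components via the explicit inverse formulas gives $H=\{v\in E^0\mid [v]\in I\}$ and $\{v\in B_H\mid [v^H]\in I\}=\emptyset$, which is the assertion. The only point requiring care is the base case of the induction, where one must use \emph{both} defining properties of a $\Gamma$-order-ideal — downward closure, to pass from $[w]\in I$ to $t^{|p|}[v]\in I$, and invariance under $\Gamma$, to strip off the power of $t$; by contrast, the breaking-vertex statement needs no monoid computation at all and falls out purely from bijectivity of the lattice isomorphism.
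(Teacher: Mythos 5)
Your proof is correct, but it takes a genuinely different route from the paper's. The paper never computes inside the monoid: it invokes the lattice isomorphism to produce an admissible pair $(G,S)$ with $I=J^\Gamma(G,S)$, deduces $V\subseteq G$ and hence $H=\ol V\subseteq G$ from the inverse formula, gets $I\subseteq J^\Gamma(H,\emptyset)$ from $V\subseteq H$ exactly as you do, and then translates the containment of ideals back to $(G,S)\leq (H,\emptyset)$ to conclude $G=H$ and $S=\emptyset$ (the last step using $S\subseteq B_G\subseteq E^0-G$ together with $S\subseteq H=G$). You instead prove the reverse inclusion $J^\Gamma(H,\emptyset)\subseteq I$ directly, by strong induction along the filtration $\Lambda_n(T(V))$ of the saturated closure, using the relation $[\so(p)]=t^{|p|}[\ra(p)]+x$ for the tree step (downward closure plus $\Gamma$-invariance) and the defining relation at a regular vertex for the saturation step; your induction is sound, including the handling of $|p|=0$ and the finiteness of $\so^{-1}(v)$ for regular $v$. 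The paper's argument is shorter because it outsources all the monoid combinatorics to the already-cited lattice isomorphism; yours is more self-contained and in effect re-derives, at the level of generators, the fact that the order-ideal generated by $V$ and by $\ol V$ coincide, while still leaning on the bijection (and its explicit inverse formulas) only for the final reading-off of the two components. Both are valid; yours would survive even if one only knew that $(H,S)\mapsto J^\Gamma(H,S)$ is injective with the stated inverse on its image, whereas the paper's proof needs surjectivity to produce $(G,S)$ in the first line.
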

\begin{proof}
Let $(G,S)$ be an admissible pair such that  $I=J^\Gamma(G,S).$ As $\{[v]\mid v\in V\}\subseteq I=J^\Gamma(G,S),$ $V\subseteq G.$ Since $H$ is the smallest hereditary and saturated set containing $V,$ $H\subseteq G.$ The converse holds since $V\subseteq H$ implies that $I\subseteq J^\Gamma(H,\emptyset).$ As $I=J^\Gamma(G,S),$ we have that $(G, S)\leq  (H, \emptyset)$ which implies $G\subseteq H$ and  $S\subseteq H.$ So, $G=H.$ As $S\subseteq E^0-G$ and $S\subseteq H=G,$ $S=\emptyset.$ 
\end{proof}

Lemma \ref{lemma_order_ideals_generated_by_terminal} describes the $\Gamma$-order-ideal generated by a cluster and shows that such an ideal is either periodic, aperiodic, or incomparable. Note that if $v$ is a vertex which is not terminal, then the $\Gamma$-order-ideal generated by $[v]$ can contain more than one type of elements. For example, if $E$ is the graph  
$\;\;\;\xymatrix{\bullet^u\ar@(lu,ld)  & \bullet^v\ar[l]\ar[r]&\bullet^w},$ then
the $\Gamma$-order-ideal generated by $[v]$ contains both $[u]$ and $[w],$ $[u]$ is periodic, and $[w]$ is incomparable.

Some parts of 
Lemma \ref{lemma_order_ideals_generated_by_terminal} generalize \cite[Theorems 3.10 and 3.11]{Roozbeh_Alfilgen_Jocelyn} shown for finite graphs.

\begin{lemma}
Let $E$ be any graph, $C$ be a cluster of a terminal vertex, and $I_C$ be the $\Gamma$-order-ideal of $M_E^\Gamma$ generated by $\{[v]\mid v\in C\}.$ The following holds. 
\begin{enumerate}[\upshape(1)]
\item The $\Gamma$-order-ideal $I_C$ is minimal and it is equal to the $\Gamma$-order-ideal generated by $[v]$ for any $v\in C.$  

\item If $v\in C$ is such that $[v]$ is periodic (respectively, aperiodic or incomparable), then $I_C$ is periodic (respectively, aperiodic or incomparable).

\item If $E$ is a cofinal, then $M_E^\Gamma$ is either periodic, aperiodic or incomparable: it is periodic if $C=c^0$ for a cycle $c$ without exits, aperiodic if $C=T(c^0)$ for an extreme cycle $c,$ and incomparable if $C$ does not contain a vertex on a cycle.   
\end{enumerate}  
\label{lemma_order_ideals_generated_by_terminal}
\end{lemma}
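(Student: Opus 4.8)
The plan is to treat the three assertions in turn, relying on Lemma \ref{lemma_terminal_cluster}, Corollary \ref{corollary_cofinality_via_sim}, Lemma \ref{lemma_for_lemma}, and the facts collected in Proposition \ref{proposition_comparable_paper}. For part (1), I would first observe that by Lemma \ref{lemma_terminal_cluster} the cluster $C$ satisfies $T(C)=C$, so $C$ is hereditary, and hence by Lemma \ref{lemma_for_lemma} the $\Gamma$-order-ideal $I_C$ equals $J^\Gamma(\ol C,\emptyset)$, the $\Gamma$-order-ideal corresponding to the admissible pair $(\ol C,\emptyset)$. Again by Lemma \ref{lemma_terminal_cluster}, for any $v\in C$ we have $\ol C=\ol{\{v\}}$ (this is the content of the ``so'' clauses in each of the four cases), and therefore the $\Gamma$-order-ideal generated by a single $[v]$ with $v\in C$ is $J^\Gamma(\ol{\{v\}},\emptyset)=J^\Gamma(\ol C,\emptyset)=I_C$; this proves the second assertion of (1). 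For minimality: a nonzero $\Gamma$-order-ideal $0\neq I'\subseteq I_C$ corresponds to an admissible pair $(\ol{V'},\emptyset)$ for some nonempty $V'\subseteq \ol C$ with $\ol{V'}\subseteq\ol C$. Since $\ol C$ is the (hereditary and) saturated closure of $C$ and $C$ is contained in every hereditary saturated set meeting it suitably — more precisely, since $P_{(\ol C,\emptyset)}$ is cofinal by the argument in the proof of Proposition \ref{proposition_ter_ideal}, its Leavitt path algebra is graded simple, hence $I(C)$ has no nontrivial graded ideals, hence $J(C)$ and $J^\Gamma(C)=I_C$ have no nontrivial $\Gamma$-order-ideals by the lattice isomorphism between graded ideals of $L_K(E)$ (equivalently of $I(C)$, via Lemma \ref{lemma_graded_ideal_transitive}) and $\Gamma$-order-ideals of $M_E^\Gamma$ contained in $I_C$. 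Thus $I'=I_C$.

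For part (2), I would argue that $I_C$ is minimal as a $\Gamma$-order-ideal, so all its nonzero elements generate the same $\Gamma$-order-ideal, namely $I_C$ itself; in particular $[v]$ and $[w]$ generate the same ideal for any $v,w\in C$. The type (periodic / aperiodic / incomparable) of a nonzero element $x$ of a minimal $\Gamma$-order-ideal is determined by the ideal: I would use the trichotomy from Proposition \ref{proposition_comparable_paper}(5) applied to the Leavitt path algebra of the porcupine graph $P_{(\ol C,\emptyset)}$, whose talented monoid is isomorphic (by Proposition \ref{proposition_monoids_of_porcupine_quotient} with $(H,S)=(\emptyset,\emptyset)$, $(G,T)=(\ol C,\emptyset)$, or directly by \cite[Theorem 3.3]{Lia_porcupine}) to $I_C$. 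Since $P_{(\ol C,\emptyset)}$ is cofinal, its talented monoid is (by part (3), which I prove next, or by Proposition \ref{proposition_comparable_paper}(5)) of exactly one of the three types, so all nonzero elements of $I_C$ — in particular $[v]$ and every other generator — have that same single type. Hence if one $[v]$ with $v\in C$ has a given type, $I_C$ has that type.

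For part (3): assume $E$ cofinal. By Theorem \ref{theorem_graded_simple}, exactly one of the cases (3a)–(3d) holds, and in cases (3b)–(3d) the set $E^0$ is the saturated closure of $c^0$ for a cycle $c$ without exits, of $c^0$ for an extreme cycle $c$, or of $\alpha^0$ for a terminal path $\alpha$, respectively (case (3a), a single sink, falls under the terminal-path-free incomparable case after noting $E$ is then acyclic — I should check whether a sink is to be grouped with the incomparable case: indeed if $E^0=\ol{\{v\}}$ for a sink $v$ then $[v]$ and all $[w]$ are incomparable by Proposition \ref{proposition_comparable_paper}(1), since no vertex connects to a cycle). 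Now: if $C=c^0$ for a cycle without exits, then by Proposition \ref{proposition_comparable_paper}(2), $[v]$ is periodic for every $v\in c^0$ (each such $v$ is trivially in the saturated closure of the finite set $c^0$ of vertices on a cycle without exits), so $M_E^\Gamma$ is periodic by Proposition \ref{proposition_comparable_paper}(5). If $C=T(c^0)$ for an extreme cycle $c$, then $c$ has an exit and every vertex of $E^0=\ol{c^0}$ lies in the saturated closure of the finite set $c^0$ of vertices on the cycle $c$; by Proposition \ref{proposition_comparable_paper}(3), $[v]$ is aperiodic for every $v$, so $M_E^\Gamma$ is aperiodic by Proposition \ref{proposition_comparable_paper}(5). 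Finally, if $C$ contains no vertex on a cycle (the sink and terminal-path cases), then $E$ is acyclic by Theorem \ref{theorem_graded_simple} (cases (3a) and (3d)), so no vertex connects to a cycle, hence $[v]$ is incomparable for every $v$ by Proposition \ref{proposition_comparable_paper}(1), and $M_E^\Gamma$ is incomparable by Proposition \ref{proposition_comparable_paper}(5).

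The main obstacle I anticipate is the minimality argument in part (1): one must be careful that a $\Gamma$-order-ideal contained in $I_C=J^\Gamma(\ol C,\emptyset)$ really corresponds to an admissible pair of the form $(\ol{V'},\emptyset)$ sitting below $(\ol C,\emptyset)$, and that the absence of nontrivial $\Gamma$-order-ideals follows cleanly from graded simplicity of $I(C)$ via the lattice isomorphisms (and Lemma \ref{lemma_graded_ideal_transitive} to pass between ideals of $I(C)$ and graded ideals of $L_K(E)$ that lie inside $I(C)$). The rest is bookkeeping with Lemma \ref{lemma_terminal_cluster} and Proposition \ref{proposition_comparable_paper}; once minimality is in hand, parts (2) and (3) are short. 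I should also double-check the placement of a bare sink: it has no vertex on a cycle and $E$ is acyclic, so it belongs with the incomparable case, consistent with the phrasing ``$C$ does not contain a vertex on a cycle'' in the statement.
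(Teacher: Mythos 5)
Your proposal is correct and follows the same basic skeleton as the paper's proof (identify $I_C$ with $J^\Gamma(\ol C,\emptyset)$ via Lemma \ref{lemma_for_lemma} and $\ol C=\ol{\{v\}}$ via Lemma \ref{lemma_terminal_cluster}, then run the case analysis cycle-without-exits / extreme cycle / no cycle through parts (1)--(3) and (5) of Proposition \ref{proposition_comparable_paper} applied to the cofinal porcupine graph $P_{(\ol C,\emptyset)}$). You diverge from the paper in two places. First, for minimality in (1) the paper argues entirely combinatorially: $\ol{\{v\}}=\ol C$ for every $v\in C$ forces $\ol C$ to have no nontrivial proper hereditary and saturated subsets, hence no admissible pair strictly between $(\emptyset,\emptyset)$ and $(\ol C,\emptyset)$; you instead route through graded simplicity of the algebra $I(C)$ (Proposition \ref{proposition_ter_ideal}) and the lattice isomorphisms. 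Both work; the paper's version stays inside the monoid/graph picture, while yours imports the algebra but is arguably more automatic. Your first formulation, that a sub-$\Gamma$-order-ideal of $I_C$ corresponds to a pair $(\ol{V'},\emptyset)$ with empty breaking set, is not justified as written (a priori the pair could be $(H',S')$ with $S'\neq\emptyset$), but your ``more precisely'' fallback makes this moot; note the paper's route also disposes of this, since $H'\in\{\emptyset,\ol C\}$ forces $S'=\emptyset$ in either case. Second, you reverse the logical order of (2) and (3): the paper proves (2) directly and reads off (3) as the special case $E^0=\ol C$, whereas you prove (3) for cofinal graphs and then deduce (2) by applying it to $P_{(\ol C,\emptyset)}$ and transporting types along $I_C\cong M^\Gamma_{P_{(\ol C,\emptyset)}}$ (which is legitimate since the order on an order-ideal is the restricted order). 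Two small cautions: your parenthetical ``or by Proposition \ref{proposition_comparable_paper}(5)'' does not by itself yield the trichotomy for a cofinal graph, since (5) is only an iff-criterion for each type, so the case analysis is genuinely needed; and in the periodic case of (3) you should invoke Proposition \ref{proposition_comparable_paper}(2) for all $v\in E^0=\ol{c^0}$, not just $v\in c^0$, exactly as you do in the aperiodic case. Neither point is a real gap.
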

\begin{proof} 
By Lemma \ref{lemma_for_lemma}, $\ol C=\{v\in E^0\mid [v]\in I_C\}$ and $I_C=J^\Gamma(\ol C, \emptyset).$ By Lemma \ref{lemma_terminal_cluster}, for every $v\in C,$ $\ol{\{v\}}=\ol C$ which implies that $\ol C$ does not contain any nontrivial and proper hereditary and saturated subsets. Thus, $I_C$ is minimal and $J^\Gamma(\ol{\{v\}}, \emptyset)=I_C.$ Hence, part (1) holds. 

To show (2), let $v\in C.$ If $v$ is not on a cycle, $v$ is a sink or on a terminal path and no $u\in C$ connects to a cycle. Hence, no $u\in \ol C$ connects to a cycle 
and so $[u]$ is incomparable by part (1) of Proposition \ref{proposition_comparable_paper}. As $[w^p]=[\ra(p)]$ for $p\in F_1(\ol C,\emptyset),$ every vertex of $P_{(\ol C,\emptyset)}$ gives rise to an incomparable element of $M_{P_{(\ol C,\emptyset)}}^\Gamma.$ 
Thus, $I_C\cong M_{P_{(\ol C,\emptyset)}}^\Gamma$ is incomparable by part (5) of Proposition \ref{proposition_comparable_paper}.

If $v$ is on a cycle $c,$ then $c$ is either without exits or extreme. In the first case, $[u]$ is periodic for every $u\in \ol C$ by part (2) of Proposition \ref{proposition_comparable_paper}. This implies that $[w]$ is periodic for every vertex $w$ of $P_{(\ol C,\emptyset)}.$
Thus, $I_C\cong M_{P_{(\ol C,\emptyset)}}^\Gamma$ is periodic by part (5) of Proposition \ref{proposition_comparable_paper}. In the second case, every element of $C$ is on an extreme cycle and so $[u]$ is aperiodic for every $u\in \ol C$ by part (3) of Proposition \ref{proposition_comparable_paper}. Thus,
every element of $I_C\cong M_{P_{(\ol C,\emptyset)}}^\Gamma$ is aperiodic by part (5) of Proposition \ref{proposition_comparable_paper}. 

Part (3) holds by part (2) since the assumption that $E^0$ is cofinal is equivalent with $E^0=\ol C$ which implies that $M_E^\Gamma=I_C.$ The rest of the claim in (3) holds by the proof of part (2). 
\end{proof}

Theorem \ref{theorem_periodic_aperiodic_incomp} follows from Lemma \ref{lemma_order_ideals_generated_by_terminal}. If $E$ is a finite graph, parts (1a) and (3a) have been shown in 
\cite[Theorems 3.10 and 3.11]{Roozbeh_Alfilgen_Jocelyn}. 
We also note that (3c) have been stated in \cite[Corollary 4.7]{Roozbeh_Lia_Comparable}.  
 
\begin{theorem}
Let $E$ be any graph. The correspondence mapping a cluster $C$ of $E$ onto the $\Gamma$-order ideal $I_C$ generated by $\{[v]\mid v\in C\}$ (equivalently by $[v]$ for any $v\in C$) is 
a bijection mapping the set of clusters of $E$ onto the set of minimal $\Gamma$-order ideals. The following also holds. 
\begin{enumerate}[\upshape(1)]
\item 
\begin{enumerate}[\upshape(a)]
\item There is a bijection between the set of cycles of $E$ with no exits and the set of $\Gamma$-order-ideals of $M_E^\Gamma$ which are periodic and minimal.  

\item The $\Gamma$-order-ideal generated by the elements $[v]$ for $v$ a vertex in a cycle without exits is the largest periodic $\Gamma$-order-ideal of $M_E^\Gamma.$

\item The $\Gamma$-monoid $M_E^\Gamma$ is periodic if and only if $E^0$ is the saturated closure of the set of vertices on cycles with no exits. 
\end{enumerate}

\item 
\begin{enumerate}[\upshape(a)]
\item There is a bijection between the set of the clusters of vertices of $E$ on extreme cycles and the set of $\Gamma$-order-ideals of $M_E^\Gamma$ which are aperiodic and minimal. 

\item The $\Gamma$-monoid $M_E^\Gamma$ is aperiodic if and only
every cycle has an exit and every vertex of $E$ is in the saturated closure of a finite the set of vertices on cycles.
\end{enumerate}

\item 
\begin{enumerate}[\upshape(a)]
\item There is a bijection between the set of the clusters of vertices of $E$ which are either sinks or on terminal paths and the set of $\Gamma$-order-ideals of $M_E^\Gamma$ which are incomparable and minimal. 

\item The $\Gamma$-monoid $M_E^\Gamma$ is incomparable if and only if $E$ is acyclic. 
\end{enumerate}
\end{enumerate}
\label{theorem_periodic_aperiodic_incomp}
\end{theorem}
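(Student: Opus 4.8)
The plan is to first establish the asserted bijection between clusters of $E$ and minimal $\Gamma$-order-ideals of $M_E^\Gamma$, and then to deduce parts (1)--(3) by sorting the clusters according to the type (periodic, aperiodic, or incomparable) of the element $[v]$ for $v$ in the cluster.

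For the bijection, observe that $C\mapsto I_C$ lands among the minimal $\Gamma$-order-ideals by Lemma \ref{lemma_order_ideals_generated_by_terminal}(1), and that $I_C=J^\Gamma(\ol C,\emptyset)$ by Lemma \ref{lemma_for_lemma} (applied with $V=C$), so $I_C$ determines $\ol C$. Injectivity then reduces to showing that distinct clusters have distinct saturated closures, and here the explicit descriptions of Lemma \ref{lemma_terminal_cluster} are used: the saturation process adjoins only regular vertices, so it never adjoins a sink and never adjoins a vertex lying on a cycle; hence if $\ol{C_1}=\ol{C_2}$ then $C_1,C_2$ are clusters of the same one of the four kinds, the sink and the two cycle cases give $C_1=C_2$ at once, and in the terminal-path case one applies Lemma \ref{lemma_on_intersecting_terminal_paths} (any infinite path contained in $\ol{\alpha^0}$ meets $T(\alpha^0)$ by Lemma \ref{lemma_saturated_closure}, hence is $\sim$-equivalent to $\alpha$) to conclude $\alpha_1\sim\alpha_2$. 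For surjectivity, a minimal nonzero $\Gamma$-order-ideal corresponds to an admissible pair $(H,S)$ minimal among the nonzero ones; since $B_\emptyset=\emptyset$ forces $H\ne\emptyset$, minimality forces $S=\emptyset$ and $H$ to be a minimal nonzero hereditary and saturated set. Then $P_{(H,\emptyset)}$ is cofinal because $L_K(P_{(H,\emptyset)})$ is graded isomorphic to the graded simple algebra $I(H)$, so Theorem \ref{theorem_graded_simple} applies to $P_{(H,\emptyset)}$; the resulting sink, cycle, or terminal path of $P_{(H,\emptyset)}$ is traced into the common vertex set $H$ and shown to retain its terminal character in $E$, using that every edge of $P_{(H,\emptyset)}$ with source in $H$ is an edge of $E$, that $H$ is hereditary, and Lemma \ref{lemma_saturated_closure}. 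This produces a terminal vertex $v\in H$ whose $E$-cluster $C$ satisfies $\ol C=\ol{\{v\}}=H$ by Lemma \ref{lemma_terminal_cluster}, so $I_C$ corresponds to $H$. I expect this descent step --- verifying that the distinguished terminal object of the porcupine graph descends to a terminal object of $E$ inside $H$ --- to be the main technical obstacle.

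With the bijection in hand, parts (1a), (2a), and (3a) follow by classifying the type of $[v]$ for $v$ in a cluster $C$. By Lemma \ref{lemma_terminal_cluster} together with Proposition \ref{proposition_comparable_paper}(1)--(3): $[v]$ is periodic exactly when $C=c^0$ for a cycle $c$ without exits; it is aperiodic exactly when $C=T(c^0)$ for an extreme cycle $c$ (an extreme cycle has an exit, so Proposition \ref{proposition_comparable_paper}(3) gives aperiodicity); and it is incomparable exactly when $v$ is a sink or lies on a terminal path, since in those cases no cycle is reachable from $v$, so $[v]$ is not comparable by Proposition \ref{proposition_comparable_paper}(1). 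Lemma \ref{lemma_order_ideals_generated_by_terminal}(2) transports this classification to $I_C$, and since $c\mapsto c^0$ is a bijection from cycles without exits onto the corresponding clusters, (1a) follows, while (2a) and (3a) follow directly.

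Parts (1c), (2b), and (3b) I would obtain from Proposition \ref{proposition_comparable_paper}(5): $M_E^\Gamma$ has a prescribed type iff $[v]$ has that type for every $v\in E^0$, and Proposition \ref{proposition_comparable_paper}(2) and (4) then translate this into the stated graph conditions, using that the saturated closure of the set $\NE$ of vertices on cycles without exits equals the union of the saturated closures of its finite subsets (again because saturation adjoins only regular vertices, so no infinite emitter and no cycle vertex outside $\NE$ is ever adjoined; this also shows that a single cycle without exits already forces periodicity of the corresponding vertex-elements, which is exactly what obstructs aperiodicity). Finally, for (1b) I would take $J$ to be the $\Gamma$-order-ideal generated by all $[v]$ with $v$ on a cycle without exits; by Lemma \ref{lemma_for_lemma}, $J=J^\Gamma(\ol{\NE},\emptyset)$, its vertex part $\ol{\NE}$ is exactly the set of $v$ with $[v]$ periodic (Proposition \ref{proposition_comparable_paper}(2) and the previous observation), and $J\cong M^\Gamma_{P_{(\ol{\NE},\emptyset)}}$ has all vertex-elements periodic, hence is periodic by Proposition \ref{proposition_comparable_paper}(5) and part (1c); any periodic $\Gamma$-order-ideal $J^\Gamma(H,S)$ has $H\subseteq\ol{\NE}$, and $S=\emptyset$ since a breaking vertex is an infinite emitter and the corresponding generator cannot be periodic, so $J^\Gamma(H,S)\subseteq J$, proving maximality.
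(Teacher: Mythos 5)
Your proposal is correct, but the core of it --- the surjectivity of $C\mapsto I_C$ onto the minimal $\Gamma$-order-ideals --- runs along a genuinely different track from the paper's. The paper works entirely inside $M_E^\Gamma$: it picks a vertex element $[v]$ in a minimal ideal $I$, splits into the periodic/aperiodic/incomparable cases via Proposition \ref{proposition_comparable_paper}, and in each case uses minimality of $I$ to rule out the existence of a proper nonzero subideal, thereby forcing a sink, a cycle without exits, an extreme cycle, or a terminal path to exist in $E$ itself. You instead pass through the lattice isomorphism to a minimal admissible pair $(H,\emptyset)$, invoke graded simplicity of $I(H)$ (via Lemma \ref{lemma_graded_ideal_transitive}) to get cofinality of the porcupine $P_{(H,\emptyset)}$, apply Theorem \ref{theorem_graded_simple} there, and descend the resulting terminal object into $H\subseteq E^0$. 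Both work: your route reuses the four-color theorem and localizes the effort in one descent verification (which you correctly flag, and which does go through because edges and paths of $P_{(H,\emptyset)}$ issuing from $H$ coincide with those of $E$, so sinks, exit-status of cycles, extremeness, and terminality are preserved), while the paper's route avoids the porcupine entirely at the cost of a longer case analysis with Proposition \ref{proposition_comparable_paper}. Your injectivity argument (distinct clusters have distinct saturated closures, split by the four kinds) is also more case-heavy than the paper's one-liner ($C\subseteq\ol D\subseteq R(D)$ together with $T(C)=C$ forces $C\cap D\neq\emptyset$). The remaining parts (1)--(3) follow the paper's line essentially verbatim, and in (1b) you even patch a point the paper elides, namely why the breaking-vertex part of a periodic ideal must vanish.

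One small repair: your justification that saturation ``never adjoins a vertex lying on a cycle'' because ``it adjoins only regular vertices'' is not right as stated --- a cycle vertex can perfectly well be regular. The conclusion you want is still true, but the correct reason is Lemma \ref{lemma_saturated_closure}: the infinite path $ccc\ldots$ has all its vertices in $\ol V$, hence must meet the hereditary set $T(V)$, which then contains all of $c^0$. With that substitution your injectivity argument is sound.
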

\begin{proof}
If $C$ is a cluster of $E$, the ideal $I_C$ is minimal and $I_C=J^\Gamma(\ol{\{v\}}, \emptyset)$ for any $v\in C$ by part (1) of Lemma \ref{lemma_order_ideals_generated_by_terminal}. The correspondence $C\mapsto I_C$ is injective since $I_C=I_D$ implies that $\{v\in E^0\mid [v]\in I_C\}=\{v\in E^0\mid [v]\in I_D\}.$ As $[v]\in I_C$ if and only if $v\in \ol C,$ and a similar equivalence holds for $D,$ we have that  
$\ol C=\ol D.$ Hence, $C\subseteq \ol D,$ so for any $v\in C,$ there is a path originating at $v$ and terminating at some $w\in D.$ Since $T(v)\subseteq T(C)=C,$ $w\in C\cap D$ which implies that $C=D.$ 

Next, we show that the correspondence $C\mapsto I_C$ is  onto. Let $I$ be a minimal ideal of $M_E^\Gamma.$ As $I$ is nontrivial, $[v]\in I$ for some $v\in E^0.$ If $[v]$ is periodic, then $v$ 
connects to a cycle $c$ without exits by part (2) of Proposition \ref{proposition_comparable_paper}. 
Thus, $\{[w]\mid w\in c^0\}\subseteq I$ and so the ideal $I_{c^0}$ generated by the set $\{[w]\mid w\in c^0\}$ is contained in $I.$ 
As $I$ is minimal, $I=I_{c^0}.$ 

If $[v]$ is aperiodic, $v$ connects to a cycle by part (1) of Proposition \ref{proposition_comparable_paper}. 
Assuming that all of the cycles to which $v$ connect have no exits, consider the hereditary and saturated closure $H$ of their vertices. As $\{0\}\subsetneq J^\Gamma(H, \emptyset)\subseteq I$ and $I$ is minimal, $J^\Gamma(H, \emptyset)=I$ which implies that $[v]\in J^\Gamma(H, \emptyset)$ so that $v\in H.$ By part (2) of Proposition \ref{proposition_comparable_paper},  $[v]$ is periodic. Since this is a contradiction, there is a cycle $c$ with an exit such that $c^0\subseteq T(v).$ So, $\{[w]\mid w\in c^0\}\subseteq I.$ Assuming that $c$ emits a path $p$ such that $\ra(p)\notin R(c^0),$ consider  the set $G=\ol{\{\ra(p)\}}.$ As $\so(p)\notin G,$ the $\Gamma$-order-ideal generated by $\{[w]\mid w\in G\}$ is nontrivial and strictly contained in $I.$ This is a contradiction, so no such path $p$ exists. Hence, $c$ is extreme. If $C$ is the cluster containing $c^0,$ $I_C\subseteq I$. As $I$ is minimal, $I=I_C.$ 

If $[v]$ is incomparable, $v$ is not on a cycle. If $v$ is a sink, then $I_{\{v\}}\subseteq I$ which implies that $I=I_{\{v\}}$ by the minimality of $I.$ If $v$ is not a sink, but $I$ contains $[w]$ for sink $w,$ $I=I_{\{w\}}$ by the same argument. Hence, we can consider the case when $I$ contains no element of the form $[w]$ for $w$ a sink. For any $w\in E^0$ such that $[w]\in I,$ $w$ connects only to vertices $u$ such that $[u]\in I,$ so $v$ does not connect to any cycles. If $v$ is an infinite emitter, then it is not on a cycle so the $\Gamma$-order-ideal generated by $[\ra(e)]$ for $e\in \so^{-1}(v)$ is a proper and nontrivial $\Gamma$-order-subideal of $I$. Since this cannot happen, $v$ is a regular vertex. As $v$ connects to neither sinks, infinite emitters, nor cycles, $v$ emits an infinite path $\alpha$ containing infinitely many vertices  
by Proposition \ref{proposition_existence_of_the_four_types}. If $\alpha$ is not terminal, a vertex of $\alpha$ emits an infinite path $\beta$ which emits a path $p$ such that $\ra(p)\notin R(\beta^0)$ which implies that no vertex of $\beta^0$ is in $T(\ra(p)).$ By Lemma \ref{lemma_saturated_closure}, 
the saturated closure $H$ of $T(\ra(p))$ does not contain $\so(p).$ Thus, the $\Gamma$-order-ideal generated by $\{[v]\mid v\in H\}$ is strictly contained in $I.$ As $I$ is minimal, this cannot happen, so $\alpha$ is terminal. If $C$ is the cluster containing $\alpha^0,$ this shows that $I_C\subseteq I.$ As $I$ is minimal, $I=I_C.$ This shows that the correspondence $C\mapsto I_C$ is onto. 

If $I$ is a minimal $\Gamma$-order-ideal and if $I=J^\Gamma(H,S)$ for some admissible pair $(H,S),$ then $P_{(H,S)}$ is cofinal, so $I\cong M_{P_{(H,S)}}^\Gamma$ is either periodic, aperiodic, or incomparable by part (3) of Lemma \ref{lemma_order_ideals_generated_by_terminal}.  This fact and the statement we just showed imply parts (1a), (2a), and (3a). 

To show (1b), let us recall that $\NE$  denotes the saturated closure of the set of vertices on cycles without exits. Let $I=J^\Gamma(\NE, \emptyset)$ so that $\NE=\{v\in E^0\mid [v]\in I\}.$ As $[v]$ is periodic for $v\in \NE$ by part (2) of Proposition \ref{proposition_comparable_paper},   $M_{P_{(\NE,\emptyset)}}^\Gamma\cong J^\Gamma(\NE, \emptyset)=I$ is periodic by part (5) of Proposition \ref{proposition_comparable_paper}. If $I'$ is a periodic $\Gamma$-order-ideal, then $[v]$ is periodic for every $v\in E^0$ such that $[v]\in I'.$ By part (2) of Proposition \ref{proposition_comparable_paper}, $v\in\NE.$ Thus, $[v]\in I,$ so $I'\subseteq I$. Hence, $I$ is the largest periodic $\Gamma$-order-ideal. 

Part (1c) follows from (1b) since $M_E^\Gamma$ is periodic if and only if 
$M_E^\Gamma$ is equal to $J^\Gamma(\NE, \emptyset)$ which is equivalent with $E^0=\NE.$

The direction ($\Rightarrow$) of part (2b) follows from parts (4) and (2) of Proposition \ref{proposition_comparable_paper} and the direction ($\Leftarrow$) from parts (3) and (5) of Proposition \ref{proposition_comparable_paper}. 

The direction $(\Rightarrow)$ of part (3b) is direct since $[v]$ is comparable if $v$ is on a cycle. The converse holds since  the existence of a nonzero comparable element implies the existence of a cycle by part (1) of Proposition 
\ref{proposition_comparable_paper}.  
\end{proof} 

In general, $M_E^\Gamma$ can contain elements of all three types. For example, let $E$ be the graph below. 
\[\xymatrix{\bullet^u\ar@(lu,ld)  & \bullet^v\ar[l]\ar[r]\ar@(ul,ur)\ar@(dr,dl)&\bullet^w}\] In $M_E^\Gamma,$ $[v]$ is aperiodic, $[u]$ periodic, and $[w]$ incomparable. Note that $E$ has a composition series $\emptyset \leq \{u\}\leq \{u,w\}\leq E^0$ and the talented monoids of the three corresponding porcupine-quotients are periodic, incomparable, and aperiodic respectively.
In Theorem \ref{theorem_two_types} and Corollary \ref{corollary_one_type}, we characterize graphs $E$ with the composition series of $M_E^\Gamma$ having composition factors of only two types and only one type. The authors of \cite{Roozbeh_Alfilgen_Jocelyn} studied conditions under which composition factors
of a composition series of $M_E^\Gamma$  for a finite graph $E$ are periodic or incomparable. \cite[Theorem 4.2]{Roozbeh_Alfilgen_Jocelyn}, without the part on Gelfand-Kirillov dimension, states that a finite graph has this property if and only if all its cycles are disjoint. Part (1) of Theorem \ref{theorem_two_types} implies this result for arbitrary graphs.

\begin{theorem}
Let $E$ be any graph.
\begin{enumerate}[\upshape(1)]
\item 
The following are equivalent.
\begin{enumerate}[\upshape(a)]
\item If $(H,S)$ and $(G,T)$ are admissible pairs of $E$ such that $(G,T)/(H,S)$ is cofinal, then  $M_{(G,T)/(H,S)}^\Gamma$ is either periodic or incomparable.

\item The cycles of $E$ are mutually disjoint. 
\end{enumerate}

\item The following are equivalent.  
\begin{enumerate}[\upshape(a)]
\item If $(H,S)$ and $(G,T)$ are admissible pairs of $E$ such that $(G,T)/(H,S)$ is cofinal, then $M_{(G,T)/(H,S)}^\Gamma$ is either aperiodic or incomparable.

\item Every cycle of $E$ contains a vertex of another cycle of $E.$   
\end{enumerate}

\item The following are equivalent.  
\begin{enumerate}[\upshape(a)]
\item If $(H,S)$ and $(G,T)$ are admissible pairs of $E$ such that $(G,T)/(H,S)$ is cofinal, then $M_{(G,T)/(H,S)}^\Gamma$ is either periodic or aperiodic.

\item Every vertex of $E$ is in the saturated closure of a finite set of  vertices on cycles.      

\item Every element of $M_E^\Gamma$ is periodic or aperiodic (i.e. comparable).
\end{enumerate}
\end{enumerate}
\label{theorem_two_types}
\end{theorem}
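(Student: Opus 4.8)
The plan is to run all three parts off a common template. For a cofinal porcupine-quotient graph $F=(G,T)/(H,S)$, Lemma \ref{lemma_order_ideals_generated_by_terminal}(3) says $M_F^\Gamma$ is \emph{exactly one} of periodic, aperiodic, incomparable, and (via Theorem \ref{theorem_graded_simple} and Lemma \ref{lemma_terminal_cluster}) which one is governed by the type of the unique cluster of $F$: periodic iff that cluster is a cycle without exits, aperiodic iff it is an extreme cycle, incomparable iff it is a sink or a terminal path. I will also use two structural facts. First, the porcupine-quotient construction of Definition \ref{definition_porcupine_quotient} creates no new cycles: the added vertices $w^p$ emit only the new edges $f^p$, which strictly shorten the spine before landing at an original vertex, nothing maps into a $w^p$ from outside the spines, and the vertices $v'$ emit nothing at all; hence every cycle of $F$ is a cycle of $E$. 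Second, Corollary \ref{corollary_construction_of_terminal_elements} is the converse device, realizing sinks/off-cycle infinite emitters, appropriate cycles, and infinite paths with ``clean'' trees as clusters of cofinal porcupine-quotients of $E$.

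For Part (1), by the template (a) fails precisely when some cofinal porcupine-quotient $F$ has an extreme cycle as its cluster. If $\neg$(b) holds, i.e. a cycle $c$ of $E$ shares a vertex with another cycle, Corollary \ref{corollary_construction_of_terminal_elements}(2) yields a cofinal $F$ in which $c$ is extreme, so $M_F^\Gamma$ is aperiodic; thus $\neg$(b)$\Rightarrow\neg$(a). Conversely, if $F=(G,T)/(H,S)$ is cofinal with an extreme cycle $c$, pick an exit $e$ of $c$ in $F$; since $\so(e)\in c^0\subseteq G-H$, $e$ is an edge of $E$ with $\ra(e)\in G-H$, $\ra(e)\notin c^0$. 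By Theorem \ref{theorem_graded_simple}(3c), $F^0=R^F(c^0)$, so there is a path from $\ra(e)$ back to $c^0$ in $F$; it cannot use an $f^p$-edge (one cannot enter a spine from $G-H$) nor an $e'$-edge (those end at sinks), hence it is a path $q$ of $E$. Then $eq$ followed by an arc of $c$ is a closed path of $E$ through $\so(e)$ containing the non-$c$ edge $e$, so it contains a cycle $d\neq c$ with $\so(e)\in d^0$, contradicting (b). Part (2) is the mirror image: $\neg$(b)$\Rightarrow\neg$(a) is again Corollary \ref{corollary_construction_of_terminal_elements}(2); and if $F$ is cofinal with $c$ a cycle without exits in $F$ and $c$ shared a vertex $u$ with a cycle $d\neq c$ of $E$, hereditariness of $H$ forces $d^0\subseteq G-H$, so $d$ is a cycle of $F$, and following the unique out-edge at each vertex of $c$ in $F$ forces $d$ to run along $c$ and hence equal $c$, a contradiction.

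For Part (3) I would close the cycle (a)$\Rightarrow$(b)$\Rightarrow$(c)$\Rightarrow$(a). The implication (c)$\Rightarrow$(a) is soft: $M^\Gamma_{(G,T)/(H,S)}\cong J^\Gamma(G,T)/J^\Gamma(H,S)$ is a $\Gamma$-order-ideal quotient of $M_E^\Gamma$ (Proposition \ref{proposition_monoids_of_porcupine_quotient}); the image of a comparable element is comparable (from $x\ge t^n x$ one gets $[x]\ge t^n[x]$ in the quotient), and a sum of comparable elements is comparable (use a common multiple of the periods), so a quotient of an all-comparable monoid is all-comparable — and being nonzero and cofinal, by the template it is periodic or aperiodic. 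The implication (b)$\Rightarrow$(c) uses Proposition \ref{proposition_comparable_paper}(3) to get that every $[v]$ is comparable, and then propagates comparability through the defining relations of $M_E^\Gamma$ (for a generator $[q^v_Z]$ one uses that $v$ lies in the hereditary-saturated closure of finitely many cycle vertices, hence is reachable from a cycle) and through sums.

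The heart of Part (3) is (a)$\Rightarrow$(b), i.e. $\neg$(b)$\Rightarrow\neg$(a): from a vertex $v$ lying in the hereditary-saturated closure $\ol V$ of \emph{no} finite set $V$ of vertices on cycles, I must build an incomparable cofinal porcupine-quotient. A K\"onig-type argument on the tree of paths from $v$ up to the first cycle vertex shows that either $E$ has a sink or an infinite emitter lying on no cycle — in which case Corollary \ref{corollary_construction_of_terminal_elements}(1) gives an incomparable cofinal porcupine-quotient — or there is an infinite path $\alpha$ from $v$ meeting no cycle vertex, all of whose vertices are again of this ``bad'' kind. If $T(\alpha^0)$ contains a sink or an off-cycle infinite emitter we again invoke Corollary \ref{corollary_construction_of_terminal_elements}(1). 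The remaining and most delicate case is that $T(\alpha^0)$ still meets some cycle; there I pass to $F'=G'/H'$ with $G'=\ol{\{v\}}$ and $H'=\ol{(\text{vertices on cycles of }G')}$, check that $\ol{\{v\}}\not\subseteq H'$ and in fact no vertex of $\alpha$ lies in $H'$ (no cycle of $G'$ reaches $v$, since $v$ is on no cycle, plus a rank induction on the sets $\Lambda_n$), so $\alpha$ survives in the acyclic graph $F'$; since a porcupine-quotient of $F'$ is a porcupine-quotient of $E$ (graded ideals of $L_K(F')$ are graded ideals of $L_K(E)$ lying between $I(H')$ and $I(G')$), applying Corollary \ref{corollary_construction_of_terminal_elements}(1) or (3) inside $F'$ yields the desired incomparable cofinal porcupine-quotient. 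I expect this last case to be the main obstacle — specifically the bookkeeping showing that $\alpha$ escapes $H'$ so that the intermediate quotient $F'$ is genuinely available and clean enough for Corollary \ref{corollary_construction_of_terminal_elements}; the other verifications (no new cycles under porcupine-quotients, the forced-edge arguments in Parts (1)--(2), and the propagation of comparability to all of $M_E^\Gamma$ in (b)$\Rightarrow$(c)) are routine.
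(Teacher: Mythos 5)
Your proposal is correct, and for parts (1) and (2) it is essentially the paper's own argument: the forward directions use Corollary \ref{corollary_construction_of_terminal_elements}(2) together with Lemma \ref{lemma_order_ideals_generated_by_terminal}(3), and the converses analyze the unique cluster of a cofinal porcupine-quotient, using that the added vertices $w^p$ and $v'$ lie on no cycles so that any offending cycle $d$ has $d^0\subseteq G-H$ and is already a cycle of $E$. (Your part (2) converse argues directly that a no-exit cycle of a cofinal quotient is disjoint from every other cycle of $E$, where the paper instead quotes Theorem \ref{theorem_periodic_aperiodic_incomp}(1b) to rule out periodic elements; both are fine.)

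Part (3) is where you genuinely diverge, and your route is the more robust one. The paper proves (a)$\Leftrightarrow$(b) directly and gets (b)$\Leftrightarrow$(c) by citation; you close the loop (a)$\Rightarrow$(b)$\Rightarrow$(c)$\Rightarrow$(a), with a soft (c)$\Rightarrow$(a) via the observation that $M^\Gamma_{(G,T)/(H,S)}\cong J^\Gamma(G,T)/J^\Gamma(H,S)$ is a quotient of a $\Gamma$-order-ideal of $M_E^\Gamma$ and that images of comparable elements are comparable (using that $x<t^nx$ is impossible, so $[x]\geq t^n[x]$ already forces comparability). More importantly, in (a)$\Rightarrow$(b) the paper asserts that a vertex outside the saturated closure of every finite set of cycle vertices reaches a sink, reaches an off-cycle infinite emitter, or lies on an infinite path $\alpha$ with $T(\alpha^0)$ free of sinks, infinite emitters and cycle vertices; this trichotomy as stated can fail (take $w\to u_1\to u_2\to\cdots$ with each $u_i$ also emitting to a loop $c_i$: every infinite path through $w$ has cycle vertices in its tree), so Corollary \ref{corollary_construction_of_terminal_elements}(3) is not directly applicable. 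Your detour through the acyclic intermediate quotient $G'/H'$ with $H'=\ol{W}$, $W$ the cycle vertices of $G'$, is exactly what repairs this: your ``rank induction on $\Lambda_n$'' correctly shows $v\in\ol{W}$ would force $v\in\ol{V_0}$ for some finite $V_0\subseteq W$, so $v$ and the whole path $\alpha$ survive into $G'/H'$, and admissible pairs of $G'/H'$ with cofinal quotient pull back to admissible pairs of $E$ with cofinal quotient via Lemma \ref{lemma_graded_ideal_transitive} and Theorem \ref{theorem_porcupine_quotient}. So the step you flag as the main obstacle is in fact sound and is the one place where your argument does necessary work that the paper's write-up glosses over; the remaining propagation of comparability to the generators $[q^v_Z]$ in (b)$\Rightarrow$(c) is the only piece left at sketch level, and it can also be short-circuited by citing Proposition \ref{proposition_comparable_paper}(4) as the paper does.
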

\begin{proof}
We show (1a) $\Rightarrow$ (1b) by contrapositive. Assume that $c$ is a cycle of $E$ which contains a vertex of another cycle of $E.$ If $(G,T)/(H,S)$ is a graph as in part (2) of Corollary \ref{corollary_construction_of_terminal_elements}, then it is cofinal and $c$ is extreme in it. By part (3) of Lemma \ref{lemma_order_ideals_generated_by_terminal}, $M^\Gamma_{(G,T)/(H,S)}$ is aperiodic. Thus, (1a) fails. 

Suppose that (1b) and the assumption of (1a) hold. As $(G,T)/(H,S)$ is cofinal, there is a unique cluster $C$ in $(G,T)/(H,S)$ by Theorem \ref{theorem_graded_simple}. Assume that $C$ contains vertices of an extreme cycle $c.$ Then $c^0\subseteq G-H$ because the vertices of $(G,T)/(H,S)$ which have the form $w^p$ or $v'$ or which are in 
$T-S$ are not on cycles. Since $c$ is extreme in $(G,T)/(H,S),$ there is an exit $e$ from $c$ such that $\ra(e)\in G-H.$ Since $\ra(e)$ connects back to $c$ in $(G,T)/(H,S),$ there is a cycle $d$ of $(G,T)/(H,S)$ which contains $e.$ Using the same argument as for $c^0\subseteq G-H,$ we have that $d^0\subseteq G-H$. Hence, $c$ and $d$ are cycles of $E$ which are not disjoint. This contradicts (1b), so either $C$ consists of vertices of a cycle without exits or $C$ contains no vertices on cycles. In the first case, $M_{(G,T)/(H,S)}^\Gamma$ is periodic and, in the second case, $M_{(G,T)/(H,S)}^\Gamma$ is incomparable by part (3) of Lemma \ref{lemma_order_ideals_generated_by_terminal}. 

We show (2a) $\Rightarrow$ (2b) by contrapositive. Assume that $c$ is a cycle of $E$ which contains a vertex of no other cycle of $E.$ If $(G,T)/(H,S)$ is a graph as in part (2) of Corollary \ref{corollary_construction_of_terminal_elements}, then it is cofinal and $c$ is without exists in it. By part (3) of Lemma \ref{lemma_order_ideals_generated_by_terminal}, $M^\Gamma_{(G,T)/(H,S)}$ is periodic. Thus, (2a) fails. 

To show (2b) $\Rightarrow$ (2a), assume that (2b) holds. Then any cofinal porcupine-quotient graph $(G,T)/(H,S)$ has no cycles without exits, so the set $\NE$ of $(G,T)/(H,S)$ is empty. By part (1b) of Theorem \ref{theorem_periodic_aperiodic_incomp}, no element of $M_{(G,T)/(H,S)}^\Gamma$ is periodic. By part (3) of Lemma \ref{lemma_order_ideals_generated_by_terminal}, $M_{(G,T)/(H,S)}^\Gamma$ is either aperiodic or incomparable. This shows (2a).

We show (3a) $\Rightarrow$ (3b) by contrapositive. If there is a vertex which is not in the saturated closure of finitely many vertices on cycles, then it emits a path to either a sink $v$, an infinite emitter $v$ which is not on a cycle, or it is on an infinite path $\alpha$ such that $T(\alpha^0)$ contains neither sinks, infinite emitters nor vertices on cycles by Lemma \ref{lemma_saturated_closure}. In the first two cases, let $(G,T)/(H,S)$ be a graph as in part (1) of Corollary \ref{corollary_construction_of_terminal_elements}. So, $(G,T)/(H,S)$ is cofinal and $v$ is its sink. By part (3) of Lemma \ref{lemma_order_ideals_generated_by_terminal}, $M_{(G,T)/(H,S)}^\Gamma$ is incomparable. Thus, (3a) fails. 
In the third case, let $(G,T)/(H,S)$ be a graph as in part (3) of Corollary \ref{corollary_construction_of_terminal_elements}. So, $(G,T)/(H,S)$ is cofinal and $\alpha$ is its terminal path. By part (3) of Lemma \ref{lemma_order_ideals_generated_by_terminal}, $M_{(G,T)/(H,S)}^\Gamma$ is incomparable. Thus, (3a) fails.

To show (3b) $\Rightarrow$ (3a), assume that (3b) holds and that $(G,T)/(H,S)$ is cofinal. By Theorem \ref{theorem_graded_simple}, there is a unique cluster $C$ of $(G,T)/(H,S)$ such that $((G,T)/(H,S))^0=\ol{C}.$ By (3b), there are neither sinks nor terminal paths in $(G,T)/(H,S),$ so $C$ contains a cycle $c.$ If $c$ is without exits in $(G,T)/(H,S),$ then $M_{(G,T)/(H,S)}^\Gamma$ is periodic and if $c$ is extreme, $M_{(G,T)/(H,S)}^\Gamma$ is aperiodic by part (3) of Lemma \ref{lemma_order_ideals_generated_by_terminal}.
 
The equivalence of (3b) and (3c) holds by parts (4) and (5) of Proposition \ref{proposition_comparable_paper}. 
\end{proof}

The condition that no cycle of a graph $E$ has an exit is strictly stronger than part  (1b) of Theorem \ref{theorem_two_types}. 
By \cite[Corollary 4.8]{Roozbeh_Lia_Comparable}, every element of $M_E^\Gamma$ is periodic or incomparable if and only if no cycle of $E$ has an exit. We also have that condition (2b) of Theorem \ref{theorem_two_types} is strictly stronger than the condition that each cycle of a graph $E$ has an exit. This last condition is equivalent to every nonzero element of $M_E^\Gamma$ being aperiodic or incomparable by \cite[Corollary 4.3]{Roozbeh_Lia_Comparable}. The equivalence of parts (3a) and (3c) of Theorem \ref{theorem_two_types} contrasts the strictness of the two implications mentioned above.  

Theorems \ref{theorem_periodic_aperiodic_incomp} and \ref{theorem_two_types} have the following corollary. 

\begin{corollary}
Let $E$ be any graph.  
\begin{enumerate}[\upshape(1)]
\item The following are equivalent.
\begin{enumerate}[\upshape(a)]
\item If $(H,S)$ and $(G,T)$ are admissible pairs of $E$ such that $(G,T)/(H,S)$ is cofinal, then $M_{(G,T)/(H,S)}^\Gamma$ is periodic.

\item The cycles of $E$ are mutually disjoint and every vertex of $E$ is in the saturated closure of a finite set of  vertices on cycles.
\end{enumerate}

\item The following are equivalent.
\begin{enumerate}[\upshape(a)]
\item If $(H,S)$ and $(G,T)$ are admissible pairs of $E$ such that $(G,T)/(H,S)$ is cofinal, then $M_{(G,T)/(H,S)}^\Gamma$ is aperiodic.

\item  Every cycle of $E$ contains a vertex of another cycle of $E$ and every vertex of $E$ is in the saturated closure of a finite set of vertices on cycles.
\end{enumerate} 

\item The following are equivalent.
\begin{enumerate}[\upshape(a)]
\item If $(H,S)$ and $(G,T)$ are admissible pairs of $E$ such that $(G,T)/(H,S)$ is cofinal, then $M_{(G,T)/(H,S)}^\Gamma$ is incomparable.

\item The graph $E$ is acyclic.
\end{enumerate} 
\end{enumerate}
\label{corollary_one_type}
\end{corollary}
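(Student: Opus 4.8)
The plan is to derive Corollary \ref{corollary_one_type} by combining each part of Theorem \ref{theorem_two_types} with the corresponding part of Theorem \ref{theorem_periodic_aperiodic_incomp}, together with the observation that a cofinal porcupine-quotient graph $(G,T)/(H,S)$ has $M^\Gamma_{(G,T)/(H,S)}$ of exactly one of the three types (periodic, aperiodic, incomparable) by part (3) of Lemma \ref{lemma_order_ideals_generated_by_terminal}. The key idea is that ``periodic'' is the conjunction of ``periodic or incomparable'' and ``periodic or aperiodic'', and similarly for the other two types, so each statement in the corollary is simply the intersection of two statements from Theorem \ref{theorem_two_types}.

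For part (1), I would argue as follows. Condition (1a) says every cofinal porcupine-quotient $M^\Gamma$ is periodic. Since periodicity is equivalent to ``(periodic or incomparable) and (periodic or aperiodic)'' for a monoid that is necessarily one of the three types (Lemma \ref{lemma_order_ideals_generated_by_terminal}(3)), condition (1a) holds if and only if both condition (1a) of Theorem \ref{theorem_two_types}(1) and condition (1a) of Theorem \ref{theorem_two_types}(3) hold. By those theorems this is equivalent to: the cycles of $E$ are mutually disjoint (Theorem \ref{theorem_two_types}(1b)) and every vertex of $E$ lies in the saturated closure of a finite set of vertices on cycles (Theorem \ref{theorem_two_types}(3b)), which is exactly condition (1b). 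Part (3) is handled identically: incomparability is ``(periodic or incomparable) and (aperiodic or incomparable)'', so (3a) is equivalent to the conjunction of Theorem \ref{theorem_two_types}(1a) and Theorem \ref{theorem_two_types}(2a), i.e.\ to ``the cycles of $E$ are mutually disjoint and every cycle of $E$ contains a vertex of another cycle of $E$''; but a cycle cannot be both disjoint from all others and share a vertex with another, so this conjunction forces $E$ to have no cycles at all, i.e.\ $E$ is acyclic, which is (3b). (Conversely if $E$ is acyclic every cofinal porcupine-quotient is acyclic hence incomparable by Lemma \ref{lemma_order_ideals_generated_by_terminal}(3), or one notes both Theorem \ref{theorem_two_types} conditions hold vacuously.)

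For part (2), aperiodicity is ``(aperiodic or incomparable) and (periodic or aperiodic)'', so (2a) is equivalent to the conjunction of Theorem \ref{theorem_two_types}(2a) and Theorem \ref{theorem_two_types}(3a), which by those theorems is equivalent to: every cycle of $E$ contains a vertex of another cycle of $E$ (Theorem \ref{theorem_two_types}(2b)) and every vertex of $E$ is in the saturated closure of a finite set of vertices on cycles (Theorem \ref{theorem_two_types}(3b)), i.e.\ condition (2b). To make the ``exactly one type'' reduction rigorous I must be careful that the implications go through for \emph{each} cofinal $(G,T)/(H,S)$ simultaneously; but this is immediate because in each of the three defining conditions (1a), (2a), (3a) of the corollary, and of Theorem \ref{theorem_two_types}, the universally quantified ``for all cofinal $(G,T)/(H,S)$'' can be pushed inside, and for a fixed such graph the Boolean identity among the three mutually exclusive, exhaustive types is elementary.

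The main (and essentially only) obstacle is bookkeeping: I must state cleanly the trichotomy for cofinal porcupine-quotient graphs (each is periodic, aperiodic, or incomparable, and exactly one holds, by Lemma \ref{lemma_order_ideals_generated_by_terminal}(3) combined with the fact that by Theorem \ref{theorem_graded_simple} a cofinal graph has a single cluster) and then verify that ``$X$'' equals ``$X$ or $Y$'' conjoined with ``$X$ or $Z$'' in this three-element setting. No new graph-theoretic or monoid-theoretic input is needed beyond what is already proved; the corollary is a formal consequence. I would write this up in a few lines per part, citing Theorems \ref{theorem_periodic_aperiodic_incomp} and \ref{theorem_two_types} and Lemma \ref{lemma_order_ideals_generated_by_terminal}(3), and noting for part (3) the extra step that mutual disjointness plus every-cycle-meets-another forces acyclicity.
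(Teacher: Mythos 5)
Your proposal is correct and takes essentially the same approach as the paper: the paper likewise derives parts (1) and (2) directly from Theorem \ref{theorem_two_types}, using the trichotomy for cofinal graphs from Lemma \ref{lemma_order_ideals_generated_by_terminal}(3). The only cosmetic difference is in part (3), where the paper argues directly (a cycle $c$ yields a cofinal porcupine-quotient containing $c$ by Corollary \ref{corollary_construction_of_terminal_elements}(2), hence a comparable talented monoid) rather than intersecting the two conditions of Theorem \ref{theorem_two_types}(1) and (2) as you do, but both routes are valid and rest on the same machinery.
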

\begin{proof}
Parts (1) and (2) follow directly from Theorem \ref{theorem_two_types}. If $E$ has a cycle $c,$ then there are admissible pairs $(G,T)$ and $(H,S)$ such that 
$(G,T)/(H,S)$ is cofinal and $(G,T)/(H,S)$  contains $c$ by part (2) of Corollary \ref{corollary_construction_of_terminal_elements}. 
Hence, $M_{(G,T)/(H,S)}^\Gamma$ is comparable by part (3) of Lemma \ref{lemma_order_ideals_generated_by_terminal}. If $E$ is acyclic, then $(G,T)/(H,S)$ is acyclic for any $(G,T)$ and $(H,S)$ such that $(G,T)/(H,S)$ is cofinal. Thus, $M_{(G,T)/(H,S)}^\Gamma$ is incomparable by part (3b) of Theorem \ref{theorem_periodic_aperiodic_incomp} (also by part (3) of Lemma \ref{lemma_order_ideals_generated_by_terminal}).    
\end{proof}


\begin{thebibliography}{10}
\bibitem{LPA_book} G. Abrams, P. Ara, M. Siles Molina, Leavitt path algebras, Lecture Notes in Mathematics 2191, Springer, London, 2017.

\bibitem{Gene_Gonzalo_pis} G. Abrams, G. Aranda Pino, \emph{Purely infinite simple Leavitt path algebras,} J. Pure Appl. Algebra
{\bf 207 (3)} (2006), 553--563.

\bibitem{Ara_Goodearl} P. Ara, K. R. Goodearl, \emph{Leavitt path algebras of separated graphs,} J. Reine Angew. Math. {\bf 669} (2012), 165--224. 

\bibitem{Ara_Pardo_graded_K_classification} P. Ara, E. Pardo, \emph{Towards a $K$-theoretic characterization of graded isomorphisms between Leavitt path algebras}, J. $K$-Theory {\bf 14} (2014), 203 -- 245. 

\bibitem{Ara_et_al_Steinberg} P. Ara, R. Hazrat, H. Li, A. Sims, \emph{Graded Steinberg algebras and their representations}, Algebra Number Theory {\bf 12 (1)} (2018), 131--172.

\bibitem{Cam_et_al} V. Cam, C. Gil Canto, M. Kanuni, M. Siles Molina, \emph{Largest Ideals in Leavitt Path Algebras},  Mediterr. J. Math. {\bf 17} (2020), 66. 

\bibitem{Eilers_et_al} S. Eilers, E. Ruiz, A. Sims, \emph{Amplified graph $C^*$-algebras II: reconstruction}, Proc. Amer. Math. Soc. Ser. B {\bf 9} (2022), 297--310. 

\bibitem{Roozbeh_Annalen} R. Hazrat, \emph{The graded Grothendieck group and classification of Leavitt path algebras,} Math. Annalen {\bf 355 (1)} (2013), 273--325. 

\bibitem{Roozbeh_book} R. Hazrat, Graded rings and graded Grothendieck groups, London Math. Soc. Lecture Note Ser. 435, Cambridge Univ. Press, 2016.

\bibitem{Roozbeh_Alfilgen_Jocelyn}  R. Hazrat, A. N. Sebandal, J. P. Vilela,  \emph{Graphs with disjoint cycles classification via the
talented monoid}, J. Algebra {\bf 593} (2022) 319--340. 

\bibitem{Roozbeh_Lia_Ultramatricial}  R. Hazrat, L. Va\v s, \emph{$K$-theory classification of graded ultramatricial algebras with involution}, Forum Math., {\bf 31 (2)} (2019), 419--463. 

\bibitem{Roozbeh_Lia_Comparable} R. Hazrat, L. Va\v s, \emph{Comparability in the graph monoid}, New York J. Math. {\bf 26} (2020), 1375--1421. 

\bibitem{Ranga_Fin_Gen} K. M. Rangaswamy, \emph{Leavitt path algebras with finitely presented irreducible representations}, J. Algebra {\bf 447} (2016), 624--648.

\bibitem{Alfilgen_Jocelyn} A. N. Sebandal, J. P. Vilela, \emph{The Jordan-H\"{o}lder Theorem for monoids with group action}, J. Algebra Appl., {\bf 22 (4)} (2023), 2350088.  


\bibitem{Tomforde} M. Tomforde, \emph{Uniqueness theorems and ideal structure for Leavitt path algebras} J. Algebra {\bf 318 (1)} (2007), 270--299.

\bibitem{Lia_traces} L. Va\v s, \emph{Canonical traces and directly finite Leavitt path algebras}, Algebr. Represent. Theory {\bf 18} (2015), 711--738.

\bibitem{Lia_realization} L. Va\v s, \emph{Simplicial and dimension groups with group action and their realization}, Forum Math. {\bf 34 (3)} (2022), 565--604. 

\bibitem{Lia_porcupine} L. Va\v s, \emph{Every graded ideal of a Leavitt path algebra is graded isomorphic to a Leavitt path algebra}, Bull. Aust. Math. Soc. {\bf 105 (2)} (2022), 248 -- 256.

\bibitem{Lia_irreducible} L. Va\v s, \emph{Graded irreducible representations of Leavitt path algebras: a new type and complete classification}, J. Pure Appl. Algebra, {\bf 227 (3)} (2023), 107213. 

\end{thebibliography}
\end{document}